\pdfoutput=1
\documentclass[11pt]{amsart}
\usepackage{amsmath,amssymb,amsthm}
\usepackage{booktabs}
\usepackage{adjustbox}
\usepackage[margin=1.2in]{geometry}
\usepackage[colorlinks=true,linkcolor=blue,citecolor=blue,urlcolor=blue]{hyperref}
\hypersetup{pdfauthor={Bernd Johannes Wuebben},
  pdftitle={Slope stability of tangent bundles of smooth toric Fano varieties}}

\newtheorem{theorem}{Theorem}[section]
\newtheorem{proposition}[theorem]{Proposition}
\newtheorem{corollary}[theorem]{Corollary}
\newtheorem{lemma}[theorem]{Lemma}

\theoremstyle{definition}
\newtheorem{definition}[theorem]{Definition}
\newtheorem{example}[theorem]{Example}
\theoremstyle{remark}
\newtheorem{remark}[theorem]{Remark}

\newcommand{\PP}{\mathbb{P}}
\newcommand{\CC}{\mathbb{C}}
\newcommand{\ZZ}{\mathbb{Z}}
\newcommand{\QQ}{\mathbb{Q}}
\newcommand{\dP}{\mathrm{dP}}
\newcommand{\Vol}{\operatorname{Vol}}

\title[Stability of tangent bundles of smooth toric Fanos]{Slope stability
of tangent bundles\\ of smooth toric Fano varieties}

\author{Bernd Johannes Wuebben}
\subjclass[2020]{14M25 (primary); 14J45, 14J60, 32Q20, 53C07 (secondary)}
\keywords{Toric Fano varieties, tangent bundle, slope stability,
Hermitian--Einstein metrics, K\"ahler--Einstein metrics, Klyachko
filtrations, reflexive polytopes}
\date{August 13, 2026}

\begin{document}

\begin{abstract}
We classify the anticanonical slope stability of tangent bundles for all
$8{,}630$ smooth toric Fano varieties in dimensions three through six, by
exact evaluation of Klyachko's criterion, and determine polystability
(equivalently, the existence of a Hermitian--Einstein metric with respect
to an anticanonical K\"ahler form) in every strictly semistable case. Every
K\"ahler--Einstein variety in the census has polystable tangent bundle,
whereas the converse fails widely: $102$ of the $109$ five-folds with stable
tangent bundle are not K\"ahler--Einstein.

The census singles out one construction at high Picard rank, which we
introduce in general: root-twisted toric $\dP_3$-fibrations over products of
projective lines, parametrized by roots of $A_2$. Every nonempty multiset of
nonzero root twists with vanishing sum produces a stable tangent bundle, in
every dimension. Among these zero-sum twists, the resulting variety is
K\"ahler--Einstein if and only if the multiset is invariant under negation or
under the order-three rotation of the root hexagon. Thus stable toric Fanos of
Picard rank $n+2$ exist for every $n\ge4$; vanishing twist sum does not force
the K\"ahler--Einstein property, while a separate unbalanced family shows that
a vanishing sum is not necessary for stability.

The proof reduces the slope inequalities for the root-twist family to
integrals over the $A_2$ moment hexagon. Positive layer decompositions and
sharp one-dimensional convolution estimates establish stability, while a
strict ordering of the hexagon's first moments at the extreme exponents
yields the K\"ahler--Einstein classification.
\end{abstract}

\maketitle

\section{Introduction}

Let $X$ be a Fano manifold of dimension $n$, polarized by $-K_X$. Two
conditions of an Einstein type compete on $X$: the existence of a
K\"ahler--Einstein metric on the manifold, and the existence of a
Hermitian--Einstein metric on the tangent bundle with respect to a K\"ahler
form in $c_1(X)$. By the Kobayashi--Hitchin correspondence of
Donaldson and Uhlenbeck--Yau \cite{Donaldson1985,UhlenbeckYau1986}, the
second condition is equivalent to the polystability of $T_X$ in the sense
of Mumford--Takemoto slopes; and if $X$ is K\"ahler--Einstein, the
K\"ahler--Einstein metric is itself Hermitian--Einstein on $T_X$, so
K\"ahler--Einstein implies polystable
\cite{Kobayashi1987,Lubke1983}. The converse fails: Fahlaoui
\cite{Fahlaoui1989} showed that the blow-up of $\PP^2$ in two points has
stable tangent bundle, although by Matsushima's theorem
\cite{Matsushima1957} it admits no K\"ahler--Einstein metric. The
relationship between the two conditions beyond this single implication has
remained largely unexplored, for a simple reason: outside of homogeneous
and low-dimensional examples, deciding the stability of a tangent bundle
is hard.

Toric varieties are the exception. Klyachko's classification of
equivariant vector bundles \cite{Klyachko1990}, in the form developed by
Perling and Kool for reflexive sheaves \cite{Perling2004,Kool2011},
reduces slope inequalities for equivariant sheaves on a smooth toric
variety to finitely many exact comparisons of lattice quantities.
Hering, Nill and S\"u\ss{} \cite{HNS2022} distilled this into a clean
combinatorial criterion for tangent bundles
(Proposition~\ref{prop:criterion} below) and used it to classify
stability of $T_X$ over the full ample cone for all smooth toric surfaces
and all smooth toric varieties of Picard rank $2$. On the
K\"ahler--Einstein side, the toric case is equally decidable: by
Mabuchi and Wang--Zhu \cite{Mabuchi1987,WangZhu2004}, a smooth toric Fano
is K\"ahler--Einstein if and only if the barycenter of its moment
polytope vanishes.

Complete classification lists are available in all dimensions of our census:
$5$ surfaces, $18$ three-folds \cite{WatanabeWatanabe1982,Batyrev1999}, $124$
four-folds \cite{Batyrev1999,Sato2000}, and, by \O bro's algorithm
\cite{Obro2007}, $866$ five-folds and $7622$ six-folds. Thus both sides of
the comparison are, in principle, finite computations across the entire
landscape. In practice the literature covers: dimension $2$
\cite{Fahlaoui1989,HNS2022}; dimension $3$, where Steffens
\cite{Steffens1996} classified $(-K)$-stability of $T_X$ for \emph{all}
smooth Fano three-folds; and dimension $4$ for Picard rank
$\rho(X) \le 2$ (Biswas--Dey--Gen\c{c}--Poddar \cite{BDGP2021}) and
$\rho \le 3$ (Dasgupta--Dey--Khan \cite{DDK}). The four-folds of Picard
rank $\ge 4$ ($86$ of Batyrev's $124$) and everything in dimensions five
and six were open.

This paper closes the gap through dimension six and carries out the
stability--K\"ahler--Einstein comparison across the resulting census.
Our main results are the following.

\begin{theorem}[Classification; Theorem~\ref{thm:main}]
Of the smooth toric Fano varieties of dimension $n = 3, 4, 5, 6$, exactly
$4$, $26$, $109$, $636$ have $(-K)$-stable tangent bundle; exactly
$7$, $24$, $82$, $366$ are strictly semistable; and exactly $7$,
$74$, $675$, $6620$ are unstable.
\end{theorem}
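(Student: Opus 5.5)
The classification is a finite computation, and I would organize the proof as an exact, certified evaluation of the Hering--Nill--S\"u\ss{} criterion (Proposition~\ref{prop:criterion}) on every entry of the classification lists.

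\textbf{Input and criterion.} Start from the complete lists of smooth Fano polytopes: the $18$ and $124$ of Batyrev and Sato, and the $866$ and $7622$ produced by \O bro's algorithm. Each polytope is given by its primitive ray generators $u_\rho\in N\cong\ZZ^n$. For the tangent bundle, Klyachko's filtrations are one-step: the $\rho$-th filtration jumps exactly at the line $\langle u_\rho\rangle$. The criterion therefore reduces the destabilizing candidates to equivariant saturated subsheaves $\mathcal F_V$ indexed by proper subspaces $0\ne V\subsetneq N_\QQ$. The slope of such a subsheaf is
\[
\mu(\mathcal F_V)=\frac{1}{\dim V}\sum_{u_\rho\in V}\deg_{-K}D_\rho ,
\]
while $\mu(T_X)=\frac{1}{n}\sum_\rho \deg_{-K}D_\rho$. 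The degrees $\deg_{-K}D_\rho=(-K_X)^{n-1}\cdot D_\rho$ are normalized lattice volumes of the facets of the moment polytope $P_{-K}$, i.e.\ of the dual reflexive polytope. These are computed exactly as rationals, in fact integers up to $(n-1)!$, by triangulating each facet.

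\textbf{Finiteness of the test set.} Only subspaces $V$ spanned by subsets of the rays matter. Enlarging $V$ to the span of the rays it contains does not decrease the sum, so one may take $V=\operatorname{span}(S)$ with $S$ closed under taking rays in its span, which leaves a finite lattice of flats of the ray matroid. Enumerate all flats of rank $1,\dots,n-1$. Since $\rho\le 2n$ for smooth Fano polytopes (Casagrande), the number of rays is at most $12$, so this enumeration is feasible, and it can be pruned by the fact that the maximum over flats of a given rank is attained on flats maximizing the weight sum.

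\textbf{Decision.} For each $X$, compute $\max_V\bigl(\mu(\mathcal F_V)-\mu(T_X)\bigr)$ in exact rational arithmetic. The bundle is stable if this maximum is negative, strictly semistable if it equals $0$, and unstable if it is positive. Tabulating the results gives the counts $4,26,109,636$; $7,24,82,366$; and $7,74,675,6620$. Their row sums, $18$, $124$, $866$ and $7622$, serve as a consistency check.

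\textbf{Cross-checks and main obstacle.} In dimension $3$ the result should agree with Steffens, and in dimension $4$ for $\rho\le 3$ with Biswas--Dey--Gen\c{c}--Poddar and Dasgupta--Dey--Khan. KE cases must never come out unstable, which is a free sanity check via the barycenter test. The main obstacle is correctness rather than complexity. Two points need care: the facet volumes must be computed exactly, and the equality cases that decide strict semistability must not be lost to floating point. Equally important is justifying that flats of the ray arrangement suffice. For this I would rely on the saturation argument above, together with the fact that $\deg D_\rho>0$ for every ray on a Fano variety, so adding rays inside $V$ only increases the slope.
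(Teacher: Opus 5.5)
Your proposal is correct and is essentially the paper's proof. Both evaluate Proposition~\ref{prop:criterion} exactly on every classified polytope, over the finitely many ray-spanned subspaces, with exact facet volumes and the same published cross-checks. Two wording points need correcting. First, the reduction step (Lemma~\ref{lem:span}) \emph{shrinks} $V$ to the span of the rays it contains, keeping the degree sum and not increasing the dimension; it does not enlarge $V$. Second, Casagrande's bound $\rho\le 2n$ allows up to $\rho+n\le 3n=18$ rays in dimension six, not $12$; the maximum is attained by $(\dP_3)^3$, the unique six-fold with $\rho=12$. The enumeration of flats remains easily feasible.
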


\begin{theorem}[Hermitian--Einstein classification; Theorem~\ref{thm:HE}]
Exactly $8$ of the $18$ smooth toric Fano three-folds, $40$ of the $124$
four-folds, $161$ of the $866$ five-folds, and $895$ of the $7622$
six-folds admit a Hermitian--Einstein metric on their tangent bundle
with respect to a K\"ahler form representing $c_1(X)$.
\end{theorem}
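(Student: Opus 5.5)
The counts in the statement come from a finite computation: for each variety in the census, decide whether $T_X$ is polystable with respect to $-K_X$. Polystability of $T_X$ is equivalent, by Donaldson--Uhlenbeck--Yau, to the existence of the Hermitian--Einstein metric. The stable varieties ($4,26,109,636$ by Theorem~\ref{thm:main}) are polystable, and the unstable ones are not. The Hermitian--Einstein count in each dimension is therefore the number of stable varieties plus the number of strictly semistable varieties whose tangent bundle is polystable. The new work is confined to the $7,24,82,366$ strictly semistable cases, and the target numbers force exactly $4,14,52,259$ of these to be polystable in dimensions $3,4,5,6$.

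To decide polystability in the strictly semistable cases, I would use the equivariant form of the Jordan--H\"older and socle theory. The maximal destabilizing-with-equal-slope subsheaves of $T_X$ are torus-equivariant, so they arise from Klyachko filtrations. Via Proposition~\ref{prop:criterion}, they correspond to proper subspaces $W \subset N_\CC$ spanned by rays, with $\mu(\mathcal F_W) = \mu(T_X)$. For each semistable $X$, I will enumerate the finitely many such ``critical'' subspaces $W$; these are the ones that attain equality in the exact slope comparison. The bundle is polystable exactly when $N_\CC$ decomposes as a direct sum $W_1 \oplus \dots \oplus W_k$ of critical subspaces for which every ray of the fan lies in some $W_i$. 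In that case $T_X \cong \bigoplus \mathcal F_{W_i}$ equivariantly, and each summand is stable of the same slope. Otherwise there is a critical $W$ with no complement of this form. The corresponding equal-slope subsheaf then admits no splitting, and $T_X$ is strictly semistable but not polystable.

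Concretely, I would carry out two checks. First, for each critical $W$, I check whether the rays not in $W$ span a complementary subspace $W'$. Second, I check that this $W'$ is itself critical, or recursively decomposes into critical subspaces, each of whose induced subsheaves is stable. The recursion terminates because the dimension drops at every step. A useful sanity check is the toric splitting criterion: if $X = X_1 \times X_2$ with the $X_i$ both having stable tangent bundles of equal anticanonical slope, then $T_X$ is polystable. Products of K\"ahler--Einstein factors should land here, consistent with the implication that K\"ahler--Einstein implies polystable.

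The main obstacle is justifying that non-splitting of the filtration data really obstructs polystability. A non-split equivariant extension could still be polystable if it splits non-equivariantly. To handle this, I would invoke uniqueness of the socle: the socle is canonical and hence torus-invariant. So $T_X$ is polystable iff it equals its equivariant socle, which reduces the question to the combinatorial decomposition test above. I would then run this test in exact rational arithmetic over all strictly semistable cases and tabulate the results.
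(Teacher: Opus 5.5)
Your reduction of the count to ``stable plus polystable strictly semistable'' matches the paper. So does your combinatorial test: a decomposition $N_\CC=\bigoplus W_i$ into ray-spanned subspaces, with every ray in some $W_i$ and each $\mathcal F_{W_i}$ stable of slope $\mu(T_X)$, is exactly the content of Proposition~\ref{prop:split}.

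The gap is at the step you yourself flag as the main obstacle. Uniqueness of the socle only shows that the socle is a $T$-invariant subsheaf. If $T_X$ is polystable, its socle is $T_X$ itself, so this invariance is vacuous and produces no $T$-invariant splitting. The statement ``polystable iff equal to the socle'' is true, but it does not reduce anything to your test. The stable subsheaves that make up the socle need not be individually $T$-invariant: for $S^{\oplus 2}$ they form a $\PP^1$ on which $T$ can act nontrivially. So you have not ruled out a critical $\mathcal F_W$ that has a non-equivariant complement but no equivariant one. That is exactly the case your test would misclassify as non-polystable.

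The missing ingredient is that a polystable $T$-equivariant reflexive sheaf splits $T$-equivariantly into stable summands of equal slope (Lemma~\ref{lem:equivpoly}). The paper proves this in three steps:
\begin{enumerate}
\item the isotypic components $B_j$ are canonical, hence $T$-invariant;
\item each stable summand $S_j$ admits a $T$-linearization, because the central extension $1\to\mathbb G_m\to\widetilde T\to T\to 1$ of automorphism pairs splits for a torus;
\item $\operatorname{Hom}(S_j,B_j)$ is a $T$-representation, which decomposes into characters.
\end{enumerate}

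A linear-reductivity argument would also close the gap. In a polystable reflexive sheaf, a saturated subsheaf of equal slope is a direct summand. Its complements form a torsor under $\operatorname{Hom}(T_X/\mathcal F_W,\mathcal F_W)$, on which $T$ acts compatibly. A torus acting rationally by affine maps on a finite-dimensional affine space has a fixed point, so there is a $T$-invariant complement. By the two-step filtrations, that complement is $\mathcal F_{W'}$ with every ray in $W$ or $W'$.

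With either argument in place, your exact search over critical subspaces does decide polystability.
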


The passage from the classification to the census requires deciding, for
each strictly semistable $T_X$, whether it is polystable. For tangent
bundles of toric varieties this is again purely combinatorial:
polystability of an equivariant reflexive sheaf can always be
exhibited by an equivariant splitting (Lemma~\ref{lem:equivpoly}, proved
for arbitrary polystable equivariant reflexive sheaves and perhaps of
independent use), and
the two-step structure of the Klyachko filtrations then implies that
equivariant direct-sum decompositions of $T_X$ correspond exactly to
partitions of the ray generators into groups spanning complementary
subspaces (Proposition~\ref{prop:split}); polystability becomes a finite
search over such partitions. More canonically, the connected components of
the ray matroid give the finest splitting and force a product decomposition of
the variety. Consequently a strictly semistable tangent bundle is polystable
exactly when the variety is a nontrivial product of smooth toric Fanos with
stable tangent bundles (Corollary~\ref{cor:matroidsplit}). The refinement is
geometrically meaningful. For instance, the
blow-up of $\PP^3$ along a line is strictly semistable but not polystable
(the three-dimensional analogue of the Hirzebruch surface
$\mathbb{F}_1$), and thus carries no Hermitian--Einstein metric despite
being semistable; while $\dP_2 \times \PP^1$ is polystable, giving a
Hermitian--Einstein tangent bundle on a manifold with no
K\"ahler--Einstein metric.

The data exhibit three phenomena.

\emph{First, instability is the dominant condition, and increasingly so
with dimension:} the unstable fraction grows from $7/18$ in dimension
three to $74/124$ in dimension four to $675/866$ in dimension five. The
Hermitian--Einstein locus thins correspondingly.

\emph{Second, K\"ahler--Einstein is a much rarer condition than a
Hermitian--Einstein tangent bundle, and the gap widens:} in dimension
five, $109$ varieties have stable $T_X$ but only $7$ of them are
K\"ahler--Einstein. Fahlaoui's surface example is not an exception; it is
the first instance of a widespread pattern in the census. In the other direction our data
confirm, in every dimension computed, that all K\"ahler--Einstein
varieties have polystable tangent bundle, as the Kobayashi--Hitchin
correspondence requires; this includes its combinatorial form, that a
vanishing barycenter forces the subspace concentration
inequalities of \cite{HNS2022,HenkLinke2014}.

\emph{Third, at high Picard rank stability collapses to a single,
structured example.} Among the $42$ five-folds with $\rho \ge 7$ exactly
one is stable. It is K\"ahler--Einstein, and it has a transparent
geometric description: a toric $\dP_3$-fibration over $(\PP^1)^3$ in
which the three base directions are twisted by the three roots
$\alpha, \beta, -(\alpha+\beta)$ of $A_2$. These three twists form an orbit
of the order-three rotation of the root hexagon, a symmetry that forces the
barycenter to vanish (hence the K\"ahler--Einstein property), while the
twist destroys the direct-sum decomposition that makes the untwisted product
$\dP_3 \times (\PP^1)^3$ merely polystable: it turns every slope equality
into a strict inequality. We define this root-twist construction for every
number of base factors (Section~\ref{sec:roottwist}). Exact computation first
verified zero-sum stability through dimension eight; Theorem~\ref{thm:zerosum}
proves it in every dimension. Theorem~\ref{thm:roottwist-ke} further proves
that such a variety is K\"ahler--Einstein exactly when its twist multiset is
symmetric under negation or the order-three rotation. A
vanishing sum is not necessary for stability: we prove that the unbalanced twists
$((\alpha,-\alpha)^{\times r},\alpha,\beta)$ are stable and
non-K\"ahler--Einstein for every $r\ge0$. Together with the two symmetric
families, whose twists form a repeated $\pm\alpha$ pair or a repeated rotation
orbit, this gives three particularly explicit infinite families. The proofs
rest on an exact criterion (Proposition~\ref{prop:reduction}):
every slope inequality of the family is an inequality between integrals
over the two-dimensional moment hexagon of $\dP_3$, and stability is
equivalent to the negativity of one such integral per twist together
with a strict triangle inequality among three line shares, one attached
to each root line of the hexagon. In the general zero-sum case these
inequalities follow from centered-box convolution estimates.

\begin{theorem}[Zero-sum root twists; Theorems~\ref{thm:zerosum}
and~\ref{thm:roottwist-ke}]
Let $t_1,\dots,t_k$ be nonzero roots of $A_2$ with
$\sum_i t_i=0$. Then $T_{X(t_\bullet)}$ is $(-K)$-stable. Consequently,
for every $n\ge4$ there is a smooth toric Fano $n$-fold of Picard rank
$n+2$ with stable tangent bundle. Moreover, $X(t_\bullet)$ is
K\"ahler--Einstein if and only if the twist multiset is invariant under
negation or under the order-three rotation of the root hexagon.
\end{theorem}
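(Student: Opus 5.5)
The plan is to work throughout with the explicit fan of $X(t_\bullet)$. Its rays are the six generators $\pm e_1,\pm e_2,\pm(e_1+e_2)$ of the $\dP_3$ fan in $N_F=\ZZ^2$, and the $2k$ rays $\pm f_i+\tau(t_i)$ for $i=1,\dots,k$, where $\tau(t_i)\in N_F$ is the lattice vector of the root $t_i$. The moment polytope $P$ then fibres over the hexagon $H$ (the moment polygon of $\dP_3$): over a point $u\in H$ the fibre is the box $\prod_i[-1+\langle t_i,u\rangle,\,1+\langle t_i,u\rangle]$, or an equivalent shifted box. So every integral over $P$ of a function depending only on $u$ and on a linear form in the base coordinates reduces to an integral over $H$ of a product of one-dimensional box integrals. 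This is exactly the content of Proposition~\ref{prop:reduction}, which I take as given. By it, $(-K)$-stability is equivalent to two conditions: (i) for each twist $t_i$, the negativity of one hexagon integral $I(t_i)$; and (ii) the strict triangle inequality among the three line shares $\lambda_\alpha,\lambda_\beta,\lambda_{\alpha+\beta}$ attached to the root lines of $H$.

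For stability under $\sum t_i=0$, I group the twists by root line and set $m_\ell=\#\{i: t_i=\pm\ell\}$. The zero-sum condition forces the signed counts $a,b,c$ along $\alpha,\beta,\alpha+\beta$ to satisfy $a=b=-c$ up to the sign conventions. The key step is to write the fibre-volume density as a convolution. The box lengths are constant (equal to $2$), but the centers shift linearly in $u$. Hence the pushforward of Lebesgue measure on $P$ to the relevant linear functional is a convolution of centered uniform densities with $H$-dependent translations.

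I would first prove a one-dimensional lemma: for a symmetric log-concave (in fact box-convolution) density $g$ and a centered box $B$, the first moment $\int_0^\infty x\,(g*\mathbf 1_B)(x)\,dx$ compared with $\int_0^\infty x\,g$ obeys a sharp inequality, with equality only in degenerate cases. The positive layer decomposition is then applied as follows. I slice $H$ into layers $\{\langle \ell,u\rangle=s\}$ and write $I(t_i)$ as $\int_0^{1} s\,(\phi(s)-\phi(-s))\,ds$, with $\phi$ built from the other twists. The zero-sum condition makes $\phi$ symmetric except for one explicitly signed contribution, and that contribution gives $I(t_i)<0$. The triangle inequality among the $\lambda_\ell$ follows in the same way. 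Each $\lambda_\ell$ is a weighted first moment, and zero sum makes the three weights comparable; the convolution estimate gives strictness because $k\ge1$ and each twist is nonzero. The corollary on Picard rank $n+2$ is immediate. For every $n\ge4$ I take $k=n-2$ twists with zero sum, for instance $(\alpha,-\alpha)$ repeated, together with $\alpha,\beta,-\alpha-\beta$ when $k$ is odd. The Picard rank is then $\#\text{rays}-n=6+2k-n=n+2$.

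For the K\"ahler--Einstein criterion I use Mabuchi/Wang--Zhu: the barycenter of $P$ must vanish. The base coordinates of the barycenter vanish by symmetry of the boxes, so only the $N_F$-component $\int_H u\,\prod_i w_i(u)\,du$ matters, where $w_i$ is the fibre weight. Negation-invariance of the multiset makes the integrand odd under $u\mapsto-u$, and invariance under rotation makes it equivariant under the order-three rotation of $H$; either way the barycenter vanishes. For the converse, I expand the weight in the multiplicities $(m_{\pm\alpha},m_{\pm\beta},m_{\pm(\alpha+\beta)})$. The barycenter becomes a combination of first moments of $H$ weighted by products of these, and I would show it is nonzero unless one of the two symmetries holds. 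This is where the "strict ordering of first moments at the extreme exponents" enters: I would compare the dominant terms, those with the highest exponents in each root direction, and show that the strict ordering of the corresponding hexagon moments prevents cancellation.

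The main obstacle is the stability inequality (i)/(ii) in full generality. The convolution estimate must be uniform in $k$ and in the multiplicities, and proving it sharply enough to give strict inequality is where I expect most of the work to lie.
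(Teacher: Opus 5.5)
Taking the reduction proposition as given is fine, but the proof of conditions (i) and (ii) and of the K\"ahler--Einstein converse is missing, and the one concrete mechanism you propose rests on a wrong picture of $P$.

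\textbf{The set-up.} The base rays are $f_i+t_i$ and $-f_i$, not $\pm f_i+t_i$. So the fibre of $P$ over $x\in Q$ is $\prod_i[-1-\ell_{t_i}(x),\,1]$, and its side lengths $2+\ell_{t_i}(x)\in[1,3]$ \emph{vary}. The fibre volume $F=\prod_i(2+\ell_{t_i})$ is exactly where the twist enters. Boxes of constant length $2$ with linearly moving centres are carried by the unimodular shear $y_i\mapsto y_i-\ell_{t_i}(x)$ to $Q\times[-1,1]^k$. That is the polytope of the untwisted product $\dP_3\times(\PP^1)^k$, which is polystable but not stable. Moreover, every quantity in Proposition~\ref{prop:reduction} is an integral of $F$ over $Q$ or its edges against functions of $x$ alone, so no pushforward of Lebesgue measure on $P$ to a linear functional is involved.

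The convolution structure that does the work lives on the hexagon. Put $x_j=\ell_{\gamma_j}$ with $\gamma_1=\alpha$, $\gamma_2=\beta$, $\gamma_3=-(\alpha+\beta)$; then $Q$ is the slice $x_1+x_2+x_3=0$ of $[-1,1]^3$. Your zero-sum count $p_j-q_j=d$, normalized to $d\ge0$ by global negation, gives $F=H^d\prod_j(4-x_j^2)^{q_j}$ with $H=\prod_j(2+x_j)$. So $F$ is a product of one-variable weights and slices of $Q$ are convolutions. The positive layer decomposition is not a slicing of $Q$ by level sets. It is the layer-cake formula writing each $(4-u^2)^q$ as a positive mixture of centred interval indicators, which reduces every inequality to centred boxes $K(a_1,a_2,a_3)$ carrying the weight $H^d$.

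\textbf{Condition (i).} On those boxes the sign statements are the substance of the proof, and your sketch does not supply them. For $d\ge1$ the slice function is not ``symmetric except for one explicitly signed contribution''. Slice $\varphi_{\gamma_1}$ at $x_1=s$: the explicit factor $(2+s)^{d-1}$ favours $s>0$, while the convolution of the other two weights, whose integrand contains $(4-2s+x_2x_3)^d$, favours $s<0$. Negativity is a quantitative contest between the two. Occurrences of $-\gamma_j$, with integrand $-x_j/(2-x_j)$, behave differently again. The paper needs two separate box lemmas:
\begin{itemize}
\item $\int_{K(a_1,a_2,a_3)}x_jH^d/(2+x_j)<0$, by pairing polar rays, with a log-derivative argument when $bc\le0$ and monotonicity in $\omega=uv$ plus an involution $r\mapsto1/(4r)$ when $bc>0$;
\item after peeling one factor $4-x_j^2$ off a negative-root occurrence, $\int_{K(a_1,a_2,a_3)}x_jH^d(2+x_j)>0$, by pairing slices and a moment recurrence $I_{m-1}/I_m\le(m+1)/(mC)$.
\end{itemize}
Your unstated one-dimensional lemma cannot replace these.

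\textbf{Condition (ii).} ``The weights are comparable'' yields no inequality. One needs $I-S_1=-\int_{-1}^1xw_1K'\,dx$ by integration by parts, with $K$ the convolution of the other two weights. Then one needs positivity of the box-share functional $\mathcal D_d(a,b,c)$ on every layer; at $d=0$ strictness comes only from the full-box atom.

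\textbf{The K\"ahler--Einstein converse.} ``Comparing dominant terms'' has no content here: there is no large parameter, and all terms contribute on $Q$ at comparable size. What works is the following. Negation-invariance means $d=0$, and $\tau$-invariance means $q_1=q_2=q_3$. One then proves $M_i<M_j$ for $M_\nu=\int_Qx_\nu F$ whenever $q_i$ is maximal, $q_j$ minimal and $q_i>q_j$. The proof uses central symmetry to write $M_i=\int_Qx_i\,e\,R_d\,G$, with $e=x_1x_2x_3$, $G=\prod_\nu(4-x_\nu^2)^{q_\nu}$ and a positive divided difference $R_d$, and then a twelve-sector decomposition of $Q$. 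Since $M_1+M_2+M_3=0$, a vanishing moment forces $q_1=q_2=q_3$.

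\textbf{A smaller point.} The base components of the barycenter do not vanish by symmetry of the boxes. They equal $-\tfrac12\langle b_x,t_i\rangle$, which is why only the fibre component matters.
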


\begin{theorem}[Three infinite families; Theorem~\ref{thm:families}]
For every $r \ge 1$, $m \ge 1$, and $s \ge 0$, the root-twist varieties
$X((\alpha,-\alpha)^{\times r})$, of dimension $n = 2r + 2$, and
$X((\alpha,\beta,-(\alpha{+}\beta))^{\times m})$, of dimension
$n = 3m + 2$, have $(-K)$-stable tangent bundle and are
K\"ahler--Einstein. The varieties
$X((\alpha,-\alpha)^{\times s},\alpha,\beta)$, of dimension $n=2s+4$,
also have stable tangent bundle but are not K\"ahler--Einstein. All three
families are smooth toric Fanos of Picard rank $n+2$. In particular, stable
K\"ahler--Einstein examples of rank $n+2$ exist in every even dimension
$n\ge4$ and every dimension $n\equiv2\pmod3$, $n\ge5$, while stable
non-K\"ahler--Einstein examples of that rank exist in every even dimension
$n\ge4$.
\end{theorem}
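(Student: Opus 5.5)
The plan is to deduce the three families from the general machinery stated later in the paper: Proposition~\ref{prop:reduction} for stability, Theorem~\ref{thm:zerosum} for zero-sum twists, and Theorem~\ref{thm:roottwist-ke} for the K\"ahler--Einstein property. For the first two families the twist multisets are $(\alpha,-\alpha)^{\times r}$ and $(\alpha,\beta,-(\alpha+\beta))^{\times m}$. Both have vanishing sum, so stability follows directly from Theorem~\ref{thm:zerosum}. The first multiset is invariant under negation, and the second under the order-three rotation $\alpha\mapsto\beta\mapsto-(\alpha+\beta)$, so Theorem~\ref{thm:roottwist-ke} gives the K\"ahler--Einstein property. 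Concretely, the barycenter of the moment polytope is fixed by the induced lattice automorphism, which acts without nonzero fixed vectors. Dimension and Picard rank come from the construction: a $\dP_3$-fibration over $(\PP^1)^k$ has dimension $k+2$ and Picard rank $4+k=n+2$. Here $k=2r$ and $k=3m$, respectively.

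The third family, $X((\alpha,-\alpha)^{\times s},\alpha,\beta)$, has twist sum $\alpha+\beta\neq0$, so Theorem~\ref{thm:zerosum} does not apply. Two facts are needed here.

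First, non-K\"ahler--Einstein. I would compute the barycenter via Mabuchi/Wang--Zhu, writing the moment polytope as fibered over the hexagon. The fiber over a hexagon point $u$ is a product of intervals whose centers are shifted linearly by $\langle t_i,u\rangle$. Integrating out the fibers, the hexagon component of the barycenter becomes a weighted first moment, with weight $\prod_i(\text{length}_i(u))$, in the direction $\sum_i t_i=\alpha+\beta$. It is nonzero because the weight is not symmetric enough to cancel it. This is the ``strict ordering of first moments'' mentioned in the abstract. Alternatively, one might verify the hypotheses of Theorem~\ref{thm:roottwist-ke} if it covers nonzero sums; otherwise one shows directly that the $\PP^1$-fiber coordinates for $\alpha$ and $\beta$ have nonzero barycenter.

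Second, stability. Via Proposition~\ref{prop:reduction}, stability is equivalent to two conditions:
\begin{itemize}
\item for each twist $t_i$, a certain hexagon integral $I(t_i)$ is negative;
\item the three line shares $L_1,L_2,L_3$, attached to the root lines of the hexagon, satisfy a strict triangle inequality.
\end{itemize}
The weight is a product of factors, one per twist, each linear or piecewise linear in $u$. Each $(\alpha,-\alpha)$ pair contributes a factor symmetric under $u\mapsto-u$ along the $\alpha$ direction. So I would write the weight as $w_s(u)=P(u)^s\,Q(u)$, where $P$ comes from a $\pm\alpha$ pair and $Q$ from the pair $\alpha,\beta$.

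The case $s=0$ is a single finite exact computation, and the census covers small $s$. For general $s$, I would use the positive layer decompositions indicated in the abstract. The hexagon is sliced into layers along the $\alpha$-direction, and $P^s$ is constant on root-line strata, since it depends only on $\langle\alpha,u\rangle$. On each layer, the inequalities reduce to one-dimensional convolution estimates in the variable $x=\langle\alpha,u\rangle$, with weight $p(x)^s$ where $p$ is even. The claim to prove is that each required quantity is a positive combination, over layers, of terms whose sign does not depend on $s$. Those terms are then checked once for $Q$.

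The main obstacle is the triangle inequality among line shares for the unbalanced family. As $s\to\infty$, the mass concentrates near $x=0$, and the $\alpha$-line share may tend toward equality with the sum of the other two. I would first compute the asymptotics of $\int x^j p(x)^s$ by Laplace's method to confirm that strictness survives in the limit. Then I would prove monotonicity in $s$ by a Chebyshev-type correlation inequality, comparing an even decreasing weight with the relevant monotone functions. Combined with the base case, this gives all $s$. The final sentence of the theorem then follows by reading off the dimensions $2r+2$, $3m+2$, and $2s+4$.
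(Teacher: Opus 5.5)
Your treatment of the two symmetric families matches the paper. Stability comes from Theorem~\ref{thm:zerosum}, the K\"ahler--Einstein property from the negation and rotation alternatives of Theorem~\ref{thm:roottwist-ke}, and smoothness, the Fano property and $\rho=n+2$ from Lemma~\ref{lem:prism}(1).

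\textbf{The gap is the unbalanced family $Y_s$.} There you give a framework but prove none of the inequalities, and the step you single out as the main obstacle is the wrong one. No layer-cake decomposition is needed. Put $x=\ell_\alpha$, $y=\ell_\beta$, $q(u)=4-u^2$ and $F=q(x)^s(2+x)(2+y)$. The slices of $Q$ at $x=u$ and $x=-u$ are $y\in[-1,1-u]$ and $y\in[-1+u,1]$. With these, every quantity in Proposition~\ref{prop:reduction} becomes an explicit one-variable integral:
\begin{itemize}
\item $\varphi_\alpha=\varphi_\beta=-\int_0^1u^2(2-u)q^s\,du$;
\item $\varphi_{-\alpha}=-\int_0^1u^2q^{s-1}(24-12u-2u^2+u^3)\,du$ for $s\ge1$;
\item $\int_QF-S_\beta=2\int_0^1q^s(1-u)(4+u-u^2)\,du$;
\item by the share identity, $\int_QF-S_\alpha=2\int_0^1q^s(4u+u^2-u^3)\,du$.
\end{itemize}
All of these integrands have a fixed sign on $[0,1]$. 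So the $\alpha$-share you flag is immediate.

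Your proposed mechanism for that share, Laplace asymptotics plus a monotone-in-$s$ comparison with the base case, runs the wrong way. The integrand $4u+u^2-u^3$ is increasing while $q$ is decreasing, so its $q^s$-weighted mean decreases with $s$. The margin's ratio to $\int_QF$ tends to $0$.

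The one sign-changing case is the line you did not flag: $\int_QF-S_{\alpha+\beta}=\int_0^1q^sh\,du$ with $h=8-10u+u^3$, and $h(1)=-1$. Your claim that every required quantity is a positive combination of layer terms of $s$-independent sign fails exactly here. This is where your Chebyshev idea belongs. Both $q^s$ and $h$ are nonincreasing on $[0,1]$, so $\int_0^1q^sh\ge\int_0^1q^s\cdot\int_0^1h=\tfrac{13}{4}\int_0^1q^s>0$.

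\textbf{The non-K\"ahler--Einstein claim also needs a computation.} Saying the weight is ``not symmetric enough to cancel'' is not an argument. The moment is also not a priori in the direction $\alpha+\beta$. Moreover, Theorem~\ref{thm:roottwist-ke} and the first-moment ordering of Lemma~\ref{lem:moment-order} (the ordering the abstract refers to) concern only zero-sum multisets, so neither applies to $Y_s$.

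What works is direct computation. By central symmetry of $Q$, only the odd part $2q(x)^s(x+y)$ of $F$ contributes to $\int_Q\ell_\alpha F$. Using the same slices, $\int_Q\ell_\alpha F=2\int_Qq(x)^s(x^2+xy)=2\int_0^1u^2(2-u)q(u)^s\,du>0$. Lemma~\ref{lem:prism}(3) then concludes.
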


Finally, for every unstable variety the computation records the canonical
first Harder--Narasimhan subspace. These subspaces have structure: in $7/7$,
$68/74$, $558/675$ and $4877/6620$ of the unstable cases in dimensions
$3$, $4$, $5$, $6$, they contain an antipodal pair of rays. Such a pair is
the combinatorial signature of a candidate relative tangent direction for an
equivariant $\PP^1$-fibration, in line with Steffens's principle that
destabilizing subsheaves of Fano tangent bundles arise from relative tangent
sheaves of extremal contractions \cite{Steffens1996}. Identifying the
corresponding contractions is a separate geometric step.

\subsection*{Related and concurrent work}
The combinatorial machinery of equivariant reflexive sheaves on toric
varieties has developed rapidly alongside this work: Napame and Tipler
study slope stability of reflexive pullbacks along toric fibrations and
produce stable perturbations of semistable tangent sheaves by varying
the polarization or blowing up \cite{NapameTipler2024}; Clarke, Napame
and
Tipler analyse how slope (poly)stability of toric sheaves behaves under
toric flips \cite{ClarkeNapameTipler}; Tipler studies the Chern classes
of stable toric sheaves \cite{Tipler2025}; and Delloque, Napame, Scarpa
and Tipler develop polynomial stability conditions with equivariant
positivity on $T$-varieties \cite{DNST2025}. On the Fano side, Kanemitsu
\cite{Kanemitsu2021} determined the stability of the tangent bundles of
Pasquier's two-orbit varieties, disproving the long-standing expectation
that a Fano manifold of Picard number one has stable tangent bundle;
and concurrently with the present paper, Chen and Lai \cite{ChenLai2026}
characterize the maximal destabilizing sheaf of the tangent sheaf of a
Fano variety through the foliated minimal model program, classifying the
$(-K)$-slope-unstable weak del Pezzo surfaces and exhibiting a singular
del Pezzo surface that carries a weak K\"ahler--Einstein metric yet has
slope-unstable tangent sheaf. In the singular setting, therefore, a weak
K\"ahler--Einstein metric need not force slope-semistability of $T_X$.

\subsection*{Reliability}
Because the mathematical content of a classification of this kind is
concentrated in the correctness of the computation, we describe the
validation in detail (Section~\ref{sec:harness}). In brief: all
arithmetic is exact (integer and rational); every variety passes internal
consistency identities; and the implementation reproduces, without
discrepancy, every published slice of the problem (Fahlaoui's del Pezzo
table; the
toric slice of Steffens's classification of Fano three-folds; the
$38$ four-folds of Picard rank $\le 3$ classified by
\cite{BDGP2021,DDK}, matched variety-by-variety via exact
$GL(4,\ZZ)$-equivalence of fans; and the K\"ahler--Einstein tables of
\cite{Nakagawa1993,Nakagawa1994,NSY2023}). All analysis code, the input polytope
data, and the per-variety result files are publicly available at
\url{https://github.com/bwuebben/toric-tangent-stability}; every verdict
is independently recomputable from the ray data alone.

\subsection*{Acknowledgements}
The author used LLMs during the development of this work for literature exploration, editorial assistance, mathematical discussion and as an adversarial reader.

\section{Preliminaries}\label{sec:prelim}

\subsection{Smooth toric Fano varieties}
Let $N \cong \ZZ^n$ be a lattice, $M$ its dual. A smooth toric Fano
$n$-fold $X = X_\Sigma$ corresponds to a \emph{smooth Fano polytope}: the
convex hull, in $N_\QQ$, of the primitive ray generators
$u_\rho$ of $\Sigma$, which is reflexive with every facet a unimodular
simplex. The dual reflexive polytope
$P = \{ m \in M_\QQ : \langle m, u_\rho \rangle \ge -1 \ \forall \rho\}$
is the moment polytope of $-K_X$. For a facet
$F_\rho = P \cap \{ \langle \cdot, u_\rho\rangle = -1\}$ we write
$\Vol(F_\rho)$ for its normalized lattice volume, i.e.\ $(n-1)!$ times
its Euclidean volume with respect to the induced lattice; then
\begin{equation}\label{eq:degrees}
\deg_{-K}(D_\rho) \;=\; (-K_X)^{n-1}\cdot D_\rho \;=\; \Vol(F_\rho),
\qquad
\sum_\rho \Vol(F_\rho) \;=\; (-K_X)^n \;=\; n!\,\mathrm{vol}(P).
\end{equation}
By Mabuchi and Wang--Zhu \cite{Mabuchi1987,WangZhu2004}, $X$ admits a
K\"ahler--Einstein metric if and only if the barycenter of $P$ is the
origin. Throughout, $\dP_k$ denotes the del Pezzo surface obtained by
blowing up $\PP^2$ in $k$ points, torus-fixed for $k \le 3$; thus
$\dP_3$ is the smooth toric surface whose ray generators are the six
roots of $A_2$.

\subsection{Klyachko data of the tangent bundle}
An equivariant reflexive sheaf on $X_\Sigma$ is equivalent to a finite
collection of decreasing filtrations $\{E^\rho(j)\}_{j \in \ZZ}$ of a
fixed vector space $E$, one for each ray
\cite{Klyachko1990,Perling2004,Kool2011}. For the tangent bundle,
$E = N_\CC$ and the filtrations are two-step:
\[
E^\rho(j) \;=\;
\begin{cases}
N_\CC, & j \le 0,\\
\CC\, u_\rho, & j = 1,\\
0, & j \ge 2.
\end{cases}
\]
For a subspace $V \subseteq N_\CC$, the associated saturated equivariant
subsheaf $\mathcal{F}_V \subseteq T_X$ has filtrations
$V \cap E^\rho(j)$, whence by \eqref{eq:degrees}
\begin{equation}\label{eq:subsheafdeg}
\deg_{-K} \mathcal{F}_V \;=\; \sum_{\rho\,:\,u_\rho \in V}
\Vol(F_\rho),
\qquad
\mu(\mathcal{F}_V) \;=\; \frac{\deg_{-K}\mathcal{F}_V}{\dim V}.
\end{equation}

\begin{proposition}[{Klyachko; Kool; \cite[Prop.~1.2]{HNS2022}}]
\label{prop:criterion}
Let $X$ be a smooth toric Fano $n$-fold. Then $T_X$ is stable
(resp.\ semistable) with respect to $-K_X$ if and only if for every
subspace $0 \ne V \subsetneq N_\CC$ spanned by ray generators,
\[
\frac{1}{\dim V}\sum_{\rho\,:\,u_\rho \in V} \Vol(F_\rho)
\;<\; \frac{(-K_X)^n}{n}
\qquad (\text{resp.}\ \le).
\]
\end{proposition}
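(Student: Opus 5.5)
The plan is to reduce the statement to Klyachko's description of equivariant reflexive sheaves together with the standard fact that, for an equivariant torsion-free sheaf on a toric variety, slope (semi)stability can be tested on equivariant saturated subsheaves only. The reason is that the maximal destabilizing subsheaf, the Harder--Narasimhan filtration, and (for semistable sheaves) the socle of the Jordan--H\"older filtration are unique, so the torus action preserves them. For stability one uses the analogous uniqueness argument due to Kool: a destabilizing subsheaf of maximal slope and minimal rank can be chosen torus-invariant. The torus is connected, so it fixes any member of a family of destabilizing subsheaves of a given kind; equivalently, the relevant Quot-scheme stratum is projective and carries a torus action, hence has a fixed point by Borel's theorem. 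So I would first invoke this reduction: $T_X$ is (semi)stable if and only if $\mu(\mathcal F)<\mu(T_X)$ (resp.\ $\le$) for every proper nonzero saturated equivariant subsheaf $\mathcal F\subseteq T_X$.

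Second, I would classify the saturated equivariant subsheaves of $T_X$ via Klyachko--Perling. A saturated equivariant subsheaf corresponds to a subspace $V\subseteq N_\CC$ with induced filtrations $V\cap E^\rho(j)$. Because each filtration of $T_X$ is two-step, $V\cap E^\rho(1)$ equals $\CC u_\rho$ when $u_\rho\in V$ and is $0$ otherwise. The first Chern class of an equivariant reflexive sheaf is $\sum_\rho\bigl(\sum_j j\dim(\text{graded piece})\bigr)D_\rho$, which for $\mathcal F_V$ gives $c_1(\mathcal F_V)=\sum_{u_\rho\in V}D_\rho$. With \eqref{eq:degrees} this yields \eqref{eq:subsheafdeg}, and in particular $\deg T_X=\sum_\rho \Vol(F_\rho)=(-K_X)^n$, so $\mu(T_X)=(-K_X)^n/n$.

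Third, I would reduce to subspaces spanned by rays. For an arbitrary $V$, let $V'=\operatorname{span}\{u_\rho: u_\rho\in V\}\subseteq V$. Then $\mathcal F_{V'}$ has the same degree as $\mathcal F_V$ and dimension at most $\dim V$. Since all degrees are nonnegative, $\mu(\mathcal F_{V'})\ge\mu(\mathcal F_V)$. If $V'=0$, then $\mu(\mathcal F_V)=0<\mu(T_X)$, which is harmless. Since $V\subsetneq N_\CC$ forces $V'\subsetneq N_\CC$, the inequality for ray-spanned subspaces implies it for all $V$, with strictness preserved. Conversely, ray-spanned subspaces are among all subspaces, which gives both directions of the equivalence.

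The main obstacle is the first step: the equivariant reduction for \emph{stability}, as opposed to semistability. The cleanest route is to cite Kool's result (or \cite[Prop.~1.2]{HNS2022}) directly. If a self-contained argument were wanted, I would argue as follows when $T_X$ is semistable but not stable. The set of saturated subsheaves of slope equal to $\mu(T_X)$ and of minimal rank is parametrized by a closed torus-invariant subset of a projective Quot scheme, and Borel's fixed point theorem supplies an equivariant member.
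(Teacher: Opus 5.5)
Your proposal is correct and follows the same route as the paper. You make the equivariant reduction to saturated equivariant subsheaves (cited from Kool and Hering--Nill--S\"u\ss{}), use the Klyachko description $\mathcal{F}_V$ with $c_1(\mathcal{F}_V)=\sum_{u_\rho\in V}D_\rho$ giving \eqref{eq:subsheafdeg}, and pass from $V$ to $V'$ as in Lemma~\ref{lem:span}, also handling $V'=0$.

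One phrasing needs fixing. Connectedness of the torus only forces it to fix members of a \emph{finite} invariant set, which is why it works for the unique Harder--Narasimhan and socle subsheaves; the socle gives nothing when $T_X$ is polystable but not stable. So the stability boundary rests on the Borel fixed-point argument. Apply it to the whole projective Quot scheme of subsheaves with a fixed Hilbert polynomial (which fixes rank and slope), then saturate the fixed point, since saturatedness is an open, not a closed, condition in families.
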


Two reductions underlie the statement, and we record them because the
polystability refinement reuses both. By the equivariant stability criterion
for toric sheaves \cite{Kool2011,HNS2022}, both stability and semistability may
be tested on saturated equivariant subsheaves. For instability this follows
immediately from uniqueness of the maximal Harder--Narasimhan subsheaf; the
equal-slope boundary needed for stability is the corresponding equivariant
socle statement. Thus it is enough to test subspaces
$V \subseteq N_\CC$. Second:

\begin{lemma}\label{lem:span}
For any subspace $V \subseteq N_\CC$ containing at least one ray
generator, let
$V' = \operatorname{span}\{u_\rho : u_\rho \in V\}$. Then
$\mu(\mathcal{F}_{V'}) \ge \mu(\mathcal{F}_V)$.
\end{lemma}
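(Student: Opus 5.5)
The plan is to compare numerators and denominators directly, using the explicit degree formula \eqref{eq:subsheafdeg}. Everything reduces to two observations about which ray generators lie in $V$ and in $V'$.

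First, I claim the sets of rays $\{\rho : u_\rho \in V\}$ and $\{\rho : u_\rho \in V'\}$ coincide. One inclusion holds because $V' \subseteq V$: $V'$ is spanned by vectors of $V$. The other holds by definition, since every $u_\rho \in V$ is one of the spanning vectors of $V'$. Plugging this into \eqref{eq:subsheafdeg} gives $\deg_{-K}\mathcal{F}_{V'} = \deg_{-K}\mathcal{F}_V$, and both equal the sum of $\Vol(F_\rho)$ over the common set of rays.

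Second, I compare dimensions. The inclusion $V' \subseteq V$ gives $\dim V' \le \dim V$. The hypothesis that $V$ contains at least one ray generator ensures $V' \ne 0$, so $\dim V' \ge 1$ and $\mu(\mathcal{F}_{V'})$ is defined.

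To conclude, I need the common degree to be nonnegative. Each $\Vol(F_\rho)$ is a normalized lattice volume of a facet of the reflexive polytope $P$, hence positive; it also equals $(-K_X)^{n-1}\cdot D_\rho>0$ by ampleness. The degree is therefore positive, and dividing a fixed positive number by the smaller denominator $\dim V' \le \dim V$ gives
\[
\mu(\mathcal{F}_{V'}) = \frac{\deg_{-K}\mathcal{F}_V}{\dim V'} \ge \frac{\deg_{-K}\mathcal{F}_V}{\dim V} = \mu(\mathcal{F}_V),
\]
with strict inequality exactly when $V' \subsetneq V$.

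There is no real obstacle here. The only point needing care is the positivity of the degree, which guarantees that shrinking the denominator increases rather than decreases the slope.
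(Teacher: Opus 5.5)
Your proof is correct and follows the paper's own argument: $V'$ and $V$ contain exactly the same ray generators, so the numerator in \eqref{eq:subsheafdeg} is unchanged, while $\dim V'\le\dim V$. You also state that the common degree is positive because each $\Vol(F_\rho)>0$; the paper leaves this implicit, and it justifies your strictness claim when $V'\subsetneq V$.
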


\begin{proof}
$V' \subseteq V$ contains exactly the same ray generators, so the
numerator of \eqref{eq:subsheafdeg} is unchanged while
$\dim V' \le \dim V$.
\end{proof}

Consequently it suffices to range over spans of linearly independent
subsets of ray generators, of which there are finitely many; and every
quantity in Proposition~\ref{prop:criterion} is an integer or a rational
number computed exactly from the polytope. Stability of $T_X$ on a
smooth toric Fano is therefore decidable by exact arithmetic, and the
decision comes with a proof: either the full list of subspace slopes
(for stability), or an explicit destabilizing subspace (for
instability).

\section{The computation and its validation}\label{sec:harness}

\subsection{Algorithm}
For each variety, presented by its ray generators: (i) enumerate the
vertices of the dual polytope $P$ exactly, verifying integrality
(reflexivity) along the way; (ii) compute each facet volume
$\Vol(F_\rho)$ by an exact recursive triangulation of the facet in a
lattice basis of its supporting hyperplane; (iii) enumerate the
ray-spanned subspaces $V$ (deduplicated by reduced row echelon form over
$\QQ$) with $1 \le \dim V \le n-1$, and compare each slope
\eqref{eq:subsheafdeg} with $(-K)^n/n$; (iv) record the maximal subsheaf
slope, all equality witnesses, and the exact barycenter of $P$. In an unstable
case the stored witness is the sum of all subspaces attaining the maximal
slope, hence the first Harder--Narasimhan subspace rather than an
enumeration-dependent maximizer. All arithmetic is over $\ZZ$ and $\QQ$
(fraction-free Bareiss elimination for determinants); no floating point
enters any verdict. Every variety is required to pass the internal
identity $\sum_\rho \Vol(F_\rho) = n!\,\mathrm{vol}(P)$ of
\eqref{eq:degrees}, with the right-hand side obtained by coning the computed
facet triangulations to the origin. This is a consistency identity, not an
independent certification of the triangulations. External checks are supplied
by the published-result and database comparisons below.

For the dimension-$7$ and $8$ members of the family of
Section~\ref{sec:roottwist}, which lie beyond the complete census above, the
reported degrees, slopes and barycenters are also evaluated directly from the
exact hexagon formulas of Lemma~\ref{lem:prism} and
Proposition~\ref{prop:reduction}: the relevant polynomials are integrated over
the fixed rational hexagon and its edges in rational arithmetic. An
accelerated full-dimensional implementation supplies an auxiliary cross-check.
It uses floating point only to propose vertex bases and Delaunay simplices,
then evaluates retained vertices and simplex determinants exactly. For the
reported inputs, an exhaustive exact basis audit confirmed that the proposal
stage omitted no true vertex. The accelerated and reference implementations
agree on every degree and verdict wherever both run (dimensions $\le 6$), and
both agree with the exact hexagon computation.

\subsection{Data}
The classification data (vertex sets of all smooth Fano polytopes in
dimensions $3$--$6$) were obtained from the \textsf{polyDB} database
(collection \textsf{Polytopes.Lattice.SmoothReflexive}, data of
Paffenholz computed by \O bro's algorithm \cite{Obro2007,polyDB}), stored
there in the dual convention and converted exactly (each facet normal
$[1, a]$ of the stored polytope contributes the vertex $a$ of the fan
polytope; no floating-point dualization). During ingestion every polytope was
re-verified as smooth and reflexive by exact arithmetic: all vertices are
primitive and every facet is a unimodular simplex at lattice distance one.
A validation script distributed with the code reconstructs every facet from the ray data and checks
these conditions, including that every listed ray is a genuine vertex; the
imported files also record the conversion protocol in their metadata. Counts match the
classification: $18$, $124$, $866$, $7622$. For dimensions $3$ and $5$
the vertex data were additionally compared file-by-file against
Paffenholz's original distribution and found identical. In dimensions $3$
and $4$, where the imported polyDB metadata include an independently stored
lattice volume, all $142$ values agree with the computed anticanonical
degrees.

\subsection{Validation against published results}\label{sec:validation}
The implementation reproduces every published result that overlaps its
range:

\begin{enumerate}
\item \emph{Surfaces} (Fahlaoui \cite{Fahlaoui1989}; \cite[Thm.~1.3,
Cor.~3.1]{HNS2022}): $\PP^2$, $\dP_2$, $\dP_3$ stable; $\PP^1\times\PP^1$
and $\mathbb{F}_1$ strictly semistable, including both equality cases,
which test the semistable boundary of the criterion.
\item \emph{Fano three-folds} (Steffens \cite{Steffens1996}): of the
$18$ toric cases our computation finds $7$ unstable and $7$ strictly
semistable; these agree exactly with the toric members of Steffens's
lists, including
$\mathrm{Bl}_{\ell}\PP^3$ and the del~Pezzo products among the strictly
semistable cases.
\item \emph{Four-folds of Picard rank $\le 3$}
(\cite{BDGP2021}; \cite[Table~1]{DDK}): all $38$ varieties were matched
to our database entries by exact $GL(4,\ZZ)$-equivalence of fans
(necessary, since seven values of $(-K)^4$ are each shared by two to four
varieties), and all $38$ verdicts agree, as do the slopes
$\mu(T_X)$ reported in the proofs of \cite{DDK}.
\item \emph{K\"ahler--Einstein tables}
(\cite{Mabuchi1987,Nakagawa1993,Nakagawa1994,NSY2023}): our barycenter
column
reproduces the known lists of K\"ahler--Einstein toric Fanos in
dimensions $\le 4$; in particular the rank-$\le 3$ four-fold list
$\{\PP^4, B_4, C_4, D_{13}\}$.
\item \emph{Hand-checked cases}: $\PP^n$, products, and projective
bundles in dimensions $2$--$4$, where all degrees and slopes are
classical.
\end{enumerate}

One near-miss shows why the matching in item (3) is necessary:
verdicts cannot be transported by numerical invariants alone,
and \cite{NSY2023} tabulate \emph{Ding} stability, a different notion;
three of the rank-$3$ four-folds ($G_4$--$G_6$) are Ding-unstable
with slope-stable tangent bundle. No contradiction is involved.

\section{The classification}\label{sec:results}

\begin{theorem}\label{thm:main}
The $(-K)$-slope stability of the tangent bundle of every smooth toric
Fano variety of dimension $3 \le n \le 6$ is as follows.
\begin{center}
\adjustbox{max width=\textwidth}{%
\begin{tabular}{lrrrr}
\toprule
$n$ & total & stable & strictly semistable & unstable \\
\midrule
3 & 18 & 4 & 7 & 7 \\
4 & 124 & 26 & 24 & 74 \\
5 & 866 & 109 & 82 & 675 \\
6 & 7622 & 636 & 366 & 6620 \\
\bottomrule
\end{tabular}}
\end{center}
\end{theorem}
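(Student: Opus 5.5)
The theorem is a computational classification, so the proof is an exact evaluation of Proposition~\ref{prop:criterion} on every entry of the complete lists. The lists are the $18$, $124$, $866$ and $7622$ smooth Fano polytopes in dimensions $3$--$6$ \cite{WatanabeWatanabe1982,Batyrev1999,Sato2000,Obro2007}, taken from polyDB and re-verified as smooth and reflexive as in Section~\ref{sec:harness}. For each variety $X$ I would first compute the dual moment polytope $P$ exactly. Then I would compute every facet volume $\Vol(F_\rho)$ by lattice triangulation, and check the identity $\sum_\rho \Vol(F_\rho)=n!\,\mathrm{vol}(P)=(-K_X)^n$ from \eqref{eq:degrees}. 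This gives the degrees $\deg_{-K}D_\rho$ and the slope $\mu(T_X)=(-K_X)^n/n$.

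Second, I would reduce to a finite test. By the equivariant criterion recalled after Proposition~\ref{prop:criterion}, it suffices to test saturated equivariant subsheaves $\mathcal F_V$. By Lemma~\ref{lem:span}, it suffices to take $V$ spanned by ray generators. I would enumerate all subsets of linearly independent rays of size $1,\dots,n-1$, deduplicate their spans by reduced row echelon form, and for each span record the set of rays it contains and the slope \eqref{eq:subsheafdeg}. Comparing each slope with $(-K_X)^n/n$ in rational arithmetic gives the verdict:
\begin{itemize}
\item \emph{stable} if every inequality is strict;
\item \emph{strictly semistable} if the maximum slope equals $(-K_X)^n/n$;
\item \emph{unstable} if some slope exceeds it.
\end{itemize}
In the unstable case the destabilizing $V$ is an explicit certificate; in the other cases the full list of slopes is the certificate. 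Tallying the verdicts by dimension yields the table.

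Third, I would validate the implementation against every overlapping published result, as in Section~\ref{sec:validation}:
\begin{itemize}
\item surfaces, following Fahlaoui and \cite{HNS2022};
\item the toric Fano three-folds, following Steffens;
\item the $38$ four-folds of rank $\le 3$ from \cite{BDGP2021,DDK}, matched by exact $GL(4,\ZZ)$-equivalence;
\item hand-checked products and projective bundles.
\end{itemize}
This guards against systematic errors in the volume computation or the subspace enumeration.

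I expect the main obstacle to be the sheer size and exactness of the subspace enumeration in dimension six. A six-fold can have many rays, so the number of independent subsets grows combinatorially. I would control this by building spans incrementally: extend each stored span by a ray not already in it, and deduplicate at each level. This keeps the work proportional to the number of distinct ray-spanned flats rather than to the number of subsets. Determinants would be computed by fraction-free Bareiss elimination, so that no floating point enters any equality test. The strictly semistable count is the most delicate part, since it depends on detecting exact slope equalities, so those cases in particular must be certified by exact rational comparison.
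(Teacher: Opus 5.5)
Your proposal is correct and follows essentially the paper's own argument. It evaluates Proposition~\ref{prop:criterion} exactly, reduced via the equivariant criterion and Lemma~\ref{lem:span} to ray-spanned subspaces deduplicated by row echelon form, and runs this over the re-verified polyDB lists with the consistency identity \eqref{eq:degrees} and the external validations of Section~\ref{sec:validation}. The incremental flat-by-flat enumeration is an implementation detail the paper leaves unspecified, and it does reach every ray-spanned subspace, since each one of dimension $r+1$ is a ray-spanned subspace of dimension $r$ extended by one further ray.
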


\begin{proof}
By Proposition~\ref{prop:criterion} and Lemma~\ref{lem:span}, stability
is decided by the finitely many exact slope comparisons of
Section~\ref{sec:harness}, executed in integer and rational arithmetic
on the classified polytopes \cite{Obro2007,polyDB};
Section~\ref{sec:validation} records the validation.
\end{proof}

Complete tables (for each variety the database identifier, ray data,
facet degrees, the exact extremal slopes, a subspace attaining the
maximal slope, and the stability type) accompany the paper as ancillary
files and are also available, together with all code and input data, in the
public repository cited in the introduction.

The distribution by Picard rank exhibits the two phenomena announced in
the introduction: the dominance of instability, and the collapse of
stability at high rank. (In the tables, st, sss and un abbreviate
stable, strictly semistable and unstable.)

\begin{center}
\adjustbox{max width=\textwidth}{%
\begin{tabular}{lrrrr@{\qquad}rrrr}
\toprule
& \multicolumn{4}{c}{$n=4$} & \multicolumn{4}{c}{$n=5$} \\
$\rho$ & \# & st & sss & un & \# & st & sss & un \\
\midrule
1 & 1 & 1 & 0 & 0 & 1 & 1 & 0 & 0 \\
2 & 9 & 0 & 3 & 6 & 15 & 0 & 3 & 12 \\
3 & 28 & 6 & 3 & 19 & 91 & 9 & 5 & 77 \\
4 & 47 & 9 & 8 & 30 & 268 & 34 & 13 & 221 \\
5 & 27 & 7 & 4 & 16 & 312 & 46 & 23 & 243 \\
6 & 10 & 3 & 4 & 3 & 137 & 18 & 18 & 101 \\
7 & 1 & 0 & 1 & 0 & 35 & 1 & 15 & 19 \\
8 & 1 & 0 & 1 & 0 & 5 & 0 & 3 & 2 \\
9 & -- & -- & -- & -- & 2 & 0 & 2 & 0 \\
\bottomrule
\end{tabular}}
\end{center}

\noindent and for $n = 6$ (all $7622$ six-folds, with $\rho$ up to $12$)
the distribution is:

\begin{center}
\adjustbox{max width=\textwidth}{%
\begin{tabular}{lrrrr@{\qquad}lrrrr}
\toprule
$\rho$ & \# & st & sss & un & $\rho$ & \# & st & sss & un \\
\midrule
1 & 1 & 1 & 0 & 0 & 7 & 771 & 66 & 69 & 636 \\
2 & 26 & 0 & 4 & 22 & 8 & 186 & 10 & 46 & 130 \\
3 & 257 & 13 & 7 & 237 & 9 & 39 & 0 & 17 & 22 \\
4 & 1318 & 85 & 30 & 1203 & 10 & 11 & 0 & 8 & 3 \\
5 & 2807 & 246 & 75 & 2486 & 11 & 1 & 0 & 1 & 0 \\
6 & 2204 & 215 & 108 & 1881 & 12 & 1 & 0 & 1 & 0 \\
\bottomrule
\end{tabular}}
\end{center}

\noindent The rows $\rho \le 3$ for $n = 4$ coincide with
\cite{BDGP2021,DDK}; the rows $\rho \ge 4$ ($86$ varieties) and the
whole of $n = 5, 6$ are new. Stability collapses at high rank in every
dimension: the last stable examples occur at $\rho = 6$ ($n=4$),
$\rho = 7$ ($n=5$) and $\rho = 8$ ($n=6$), and above these ranks the
varieties (pseudo-symmetric ones and products) are never stable (a
product tangent bundle splits). The unique stable five-fold with
$\rho = 7$ is the subject of Section~\ref{sec:roottwist}; its
higher-dimensional analogues populate the last stable rank in each
dimension.

\section{Polystability and Hermitian--Einstein metrics}
\label{sec:polystable}

Semistability does not decide the existence of a Hermitian--Einstein
metric; polystability does \cite{Donaldson1985,UhlenbeckYau1986}.
Polystability of an equivariant sheaf can, a priori, be exhibited by a
non-equivariant splitting; the first point is that on a toric variety it
never has to be (compare \cite{Klyachko1990,Kool2011}).

\begin{lemma}\label{lem:equivpoly}
Let $X$ be a smooth projective toric variety with torus $T$ and let $E$
be a $T$-equivariant reflexive sheaf that is polystable with respect to a
fixed polarization. Then $E$ admits a $T$-equivariant decomposition into
$T$-equivariant stable subsheaves of equal slope.
\end{lemma}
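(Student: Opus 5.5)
The plan is to use the socle of $E$, in the sense of Huybrechts--Lehn: the maximal polystable subsheaf of the same slope, together with its canonical isotypic decomposition. Write $E \cong \bigoplus_{i=1}^{s} G_i^{\oplus m_i}$ with $G_i$ stable, pairwise non-isomorphic, and all of slope $\mu = \mu(E)$. For each $i$, set
\[
E_i \;=\; \sum_{\varphi\in \operatorname{Hom}(G_i,E)} \varphi(G_i)\;\subseteq\; E,
\]
the $G_i$-isotypic component. Stable sheaves of equal slope are simple, and morphisms between non-isomorphic ones vanish. Hence $\operatorname{Hom}(G_i,E)=\CC^{m_i}$ and $E_i=G_i^{\oplus m_i}$, so $E=\bigoplus_i E_i$. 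This decomposition is canonical, but it is indexed by isomorphism classes, which the torus might permute. The first step is therefore to show $T$ fixes each class. For $t\in T$, the pullback $t^*G_i$ is again a stable summand of $t^*E\cong E$. Because $T$ is connected, the set $\{t: t^*G_i\cong G_i\}$ is closed, as a semicontinuity statement for $\dim\operatorname{Hom}$. Since the $t$-orbit of $[G_i]$ lies in the finite set $\{[G_1],\dots,[G_s]\}$, these closed sets cover $T$ and are pairwise disjoint. By connectedness, $t^*G_i\cong G_i$ for all $t$. It follows that each $E_i$ is a $T$-invariant saturated subsheaf, hence $T$-equivariant, and the decomposition $E=\bigoplus_i E_i$ is equivariant.

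The second step handles a single isotypic piece $E_i\cong G\otimes_\CC W$ with $W=\operatorname{Hom}(G,E_i)$ of dimension $m$. Each $t^*G\cong G$, and $G$ is simple. So $G$ carries a projective $T$-linearization, and an extension of $T$ by $\CC^*$ acts on $G$. Central extensions of a torus by $\CC^*$ are again tori and split after passing to a finite cover. Alternatively, I would use the Klyachko/Perling description directly: $G$ admits a genuine $T$-equivariant structure, possibly after twisting by a character, because the obstruction lives in $H^2$ of the character group of a torus, which vanishes for algebraic group extensions of tori. With $G$ equivariant, $W=\operatorname{Hom}(G,E_i)$ becomes a finite-dimensional rational $T$-representation. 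It therefore splits into characters $W=\bigoplus_{k=1}^m \CC\chi_k$, and the evaluation map $G\otimes W\to E_i$ is a $T$-equivariant isomorphism. This yields $E_i=\bigoplus_k G\otimes\chi_k$, and each $G\otimes\chi_k$ is equivariant and stable of slope $\mu$.

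The main obstacle is the linearization in the second step: promoting "$t^*G\cong G$ for all $t$" to an actual equivariant structure on $G$. I would first try a direct argument on the filtration side. The equivariant structure of $E$ gives $E_i$ a Klyachko filtration on its generic fibre $E_i(x_0)=G(x_0)\otimes W$. Because $\operatorname{End}(E_i)=\operatorname{End}(W)$ is acted on by $T$ through a torus action on the matrix algebra, I would choose a $T$-stable maximal torus of $GL(W)$. Weight decomposition then gives $T$-invariant rank-one idempotents, whose images are equivariant copies of $G$. This avoids group cohomology altogether. It needs only that a torus acting algebraically on $\operatorname{End}(W)$ by automorphisms, which are inner and hence come from $PGL(W)$, fixes a full flag of idempotents, and that holds because the image of $T$ in $PGL(W)$ is a torus and lies in a maximal torus.
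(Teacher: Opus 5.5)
Your proposal is correct. Your first step matches the paper's: the canonical isotypic decomposition, and the argument that $T$ fixes each isomorphism class because the loci $\{t : t^*G_i\cong G_j\}$ are closed, pairwise disjoint and finite in number. Closedness comes from semicontinuity of $\dim\operatorname{Hom}(t^*G_i,G_j)$, together with the fact that a nonzero map between stable reflexive sheaves of equal slope is an isomorphism. Connectedness is used only afterwards, not for closedness.

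The routes diverge at the linearization. The paper builds a $T$-structure on each stable $S_j$ from scratch. The pairs $(t,\psi)$ form a central extension $\widetilde T$ of $T$ by $\mathbb{G}_m$. This extension splits over $T$ itself: the commutator pairing is trivial on the connected $T$, so $\widetilde T$ is a torus, and its character sequence splits because $X^*(\mathbb{G}_m)\cong\ZZ$ is free. Then $W_j=\operatorname{Hom}(S_j,B_j)$ is a $T$-representation, and its character decomposition splits $B_j$.

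Your idempotent argument instead works inside $\operatorname{End}(E_i)\cong\operatorname{End}(W)$, on which $T$ acts rationally by algebra automorphisms. Skolem--Noether gives an algebraic homomorphism $T\to PGL(W)$. Its image lies in the image of a diagonal torus for a suitable basis of $W$. The corresponding diagonal idempotents are then $T$-invariant, i.e.\ equivariant endomorphisms, and their images are equivariant subsheaves isomorphic to $G$.

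This route is a little more economical. No linearization of $G$ has to be constructed; one is inherited, since $G$ is isomorphic to an equivariant subsheaf. The splitting of the central extension is absorbed into the standard fact that the preimage in $GL(W)$ of a torus of $PGL(W)$ is a torus. The same device could also replace your first step: the connected $T$ must fix each of the finitely many primitive central idempotents of the semisimple algebra $\operatorname{End}(E)$. The paper's route, in exchange, makes explicit that each isotypic piece is $S_j\otimes W_j$ with $W_j$ a genuine $T$-representation, so the equivariant summands are the character twists $S_j\otimes\chi$ of one linearized sheaf.

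Drop the phrase ``split after passing to a finite cover'' from your first version. A splitting only over an isogenous torus $T'\to T$ would make $G$ merely $T'$-equivariant, which does not prove the lemma, and in fact no cover is needed. Likewise, ``twisting by a character'' plays no role, since the issue is the existence of a linearization, not its normalization.
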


\begin{proof}
Write $E \cong \bigoplus_j S_j^{\oplus a_j}$ with the $S_j$ stable of
equal slope and pairwise non-isomorphic. For $t \in T$ we have
$t^* E \cong E$; torus translations preserve the polarization, so each
$t^* S_j$ is again a stable summand of $E$ of the same slope and
its isomorphism class lies in the finite set $\{[S_1], \dots, [S_r]\}$.
Each locus $\{t \in T : t^*S_j \cong S_{j'}\}$ is closed (it is where
$\dim \operatorname{Hom}(t^*S_j, S_{j'}) \ge 1$, and a nonzero map
between stable sheaves of equal slope is an isomorphism, so these loci
are disjoint); finitely many disjoint closed sets covering the
irreducible variety $T$ force one of them to be everything, and it
contains $t = 1$ for $j' = j$. Hence $t^* S_j \cong S_j$ for all $t$.
The isotypic summand is canonically
\[
B_j=\operatorname{im}\bigl(S_j\otimes
\operatorname{Hom}(S_j,E)\longrightarrow E\bigr).
\]
It therefore satisfies $t^* B_j=B_j$, is an equivariant subsheaf, and
$E = \bigoplus_j B_j$ equivariantly.

Next, each $S = S_j$ itself carries an equivariant structure. The pairs
$(t, \psi)$ with $\psi \colon t^* S \xrightarrow{\ \sim\ } S$ form
an affine algebraic group $\widetilde{T}$, a central extension
$1 \to \mathbb{G}_m \to \widetilde{T} \to T \to 1$ because $S$ is simple;
$S$ is tautologically $\widetilde{T}$-equivariant. Every such extension
of a torus splits: the commutator pairing $T \times T \to \mathbb{G}_m$
is bimultiplicative, so $t \mapsto e(t, \cdot)$ is a morphism from the
connected group $T$ to the discrete character lattice, hence trivial;
$\widetilde{T}$ is therefore a commutative extension of tori, hence a torus,
and the extension splits because its
character sequence
$0 \to X^*(T) \to X^*(\widetilde{T}\,) \to X^*(\mathbb{G}_m) \to 0$
has free quotient $X^*(\mathbb{G}_m) \cong \ZZ$. A splitting $T \to \widetilde{T}$ makes $S$ itself
$T$-equivariant.

Finally $W_j = \operatorname{Hom}(S_j, B_j)$ is a finite-dimensional
$T$-representation, and evaluation
$S_j \otimes W_j \to B_j$ is an equivariant isomorphism; decomposing
$W_j$ into characters $\chi$ decomposes $B_j$ equivariantly into the
sheaves $S_j \otimes \chi$, each equivariant with underlying sheaf
$S_j$, hence stable of the same slope.
\end{proof}

For toric tangent bundles the equivariant question is then purely
combinatorial, because of the following elementary observation.

\begin{proposition}\label{prop:split}
Let $X$ be a smooth toric variety. Equivariant direct-sum decompositions
$T_X \cong \bigoplus_{i=1}^m \mathcal{F}_{V_i}$ correspond exactly to
decompositions $N_\CC = \bigoplus_{i=1}^m V_i$ such that every ray
generator $u_\rho$ lies in some $V_i$. If $X$ is in addition projective and
Fano, then $T_X$ is polystable
with respect to $-K_X$ if and only if the ray generators admit a
partition into groups $\Gamma_1, \dots, \Gamma_m$ whose spans $V_i =
\operatorname{span} \Gamma_i$ are linearly independent with
$\sum_i \dim V_i = n$, such that each $\mathcal{F}_{V_i}$ has slope
$(-K)^n/n$ and each is stable, i.e.\ every ray-spanned proper subspace
$0 \neq U \subsetneq V_i$ satisfies
$\mu(\mathcal{F}_U) < (-K)^n/n$.
\end{proposition}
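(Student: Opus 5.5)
The plan is to prove the first assertion through Klyachko's equivalence of categories, then combine it with Lemma~\ref{lem:equivpoly} and Proposition~\ref{prop:criterion}.

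\textbf{Equivariant decompositions.} An equivariant decomposition of a reflexive sheaf with Klyachko data $(E,\{E^\rho(j)\})$ is the same as a decomposition $E=\bigoplus_i E_i$ that is compatible with every filtration, i.e.\ $E^\rho(j)=\bigoplus_i (E_i\cap E^\rho(j))$ for all $\rho$ and $j$. This follows from the equivalence of categories \cite{Klyachko1990,Perling2004,Kool2011}: equivariant morphisms correspond to filtered linear maps, so equivariant idempotents correspond to filtered idempotents. For $T_X$ the filtrations are two-step, so compatibility only has content at $j=1$, where it reads $\CC u_\rho=\bigoplus_i(V_i\cap \CC u_\rho)$. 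A line splits compatibly with a direct sum only if it lies in one summand, so the condition is exactly that each $u_\rho$ lies in some $V_i$. Conversely, if every $u_\rho$ lies in some $V_i$, the filtrations of the $\mathcal{F}_{V_i}$ are $V_i\cap E^\rho(j)$, and they sum to $E^\rho(j)$. Hence the map $\bigoplus\mathcal{F}_{V_i}\to T_X$ is an isomorphism on Klyachko data and thus an equivariant isomorphism. In this situation the $\Gamma_i=\{u_\rho\in V_i\}$ partition the rays. Since the rays span $N$ (the fan is complete when $X$ is projective, and in general we restrict to the span), each $V_i=\operatorname{span}\Gamma_i$.

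\textbf{Polystability.} Suppose $T_X$ is polystable. By Lemma~\ref{lem:equivpoly} it splits equivariantly into stable summands of equal slope, necessarily $\mu(T_X)=(-K)^n/n$. By the first part these summands are the $\mathcal{F}_{V_i}$ for a ray partition with $V_i=\operatorname{span}\Gamma_i$ independent and $\sum_i\dim V_i=n$. Each summand $\mathcal{F}_{V_i}$ is an equivariant reflexive sheaf with two-step filtrations on $V_i$. Applying the equivariant stability criterion (Proposition~\ref{prop:criterion}, together with Lemma~\ref{lem:span}) to it, its stability is equivalent to $\mu(\mathcal{F}_U)<\mu(\mathcal{F}_{V_i})$ for all ray-spanned $0\ne U\subsetneq V_i$. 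Here the degree of $\mathcal{F}_U$ computed inside $\mathcal{F}_{V_i}$ agrees with the formula \eqref{eq:subsheafdeg}, since the filtrations are the same. Conversely, given such a partition, the first part gives $T_X\cong\bigoplus\mathcal{F}_{V_i}$, and each summand is stable of slope $(-K)^n/n$, so $T_X$ is polystable.

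\textbf{Main obstacle.} The delicate point is the criterion for each summand. Proposition~\ref{prop:criterion} is stated for $T_X$ itself, so we must check that the Kool/HNS equivariant criterion (testing stability on saturated equivariant subsheaves, with equal-slope boundary) applies verbatim to $\mathcal{F}_{V_i}$. The reduction to ray-spanned subspaces via Lemma~\ref{lem:span} must also be repeated inside $V_i$; this works because its filtrations are again two-step with lines $\CC u_\rho$ for $u_\rho\in V_i$ and $0$ otherwise. The slope comparison is against $(-K)^n/n$ because $\mu(\mathcal{F}_{V_i})$ equals this value.
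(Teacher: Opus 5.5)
Your proposal is correct and follows essentially the same route as the paper. In both, compatibility of $N_\CC=\bigoplus V_i$ with the two-step Klyachko filtrations forces each $u_\rho$ into a single summand, and polystability is reduced, via Lemma~\ref{lem:equivpoly}, to an equivariant splitting whose summands are tested with the same equivariant criterion and Lemma~\ref{lem:span} on ray-spanned subspaces of each $V_i$. Your additional details, namely the correspondence of equivariant idempotents with filtered idempotents and the identity $V_i=\operatorname{span}\Gamma_i$ (which holds because the rays of a complete fan span $N_\CC$), make explicit steps that the paper leaves implicit.
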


\begin{proof}
A direct sum of equivariant reflexive sheaves has, on each ray, the
direct sum of the filtrations. Since the filtrations of $T_X$ are
two-step, a decomposition $N_\CC = \bigoplus V_i$ is compatible with them
if and only if for every $\rho$ the line $\CC u_\rho$ equals
$\bigoplus_i (V_i \cap \CC u_\rho)$, i.e.\ $u_\rho \in V_i$ for some
(necessarily unique) $i$; in that case the summand with fiber $V_i$ has
exactly the filtrations of $\mathcal{F}_{V_i}$, and Klyachko data
determine the sheaf. The polystability statement follows since a
polystable bundle is a direct sum of stable sheaves of equal slope, the
decomposition may be taken equivariant by Lemma~\ref{lem:equivpoly}, and
stability of each summand is tested by the same equivariant stability
criterion as above and then, by Lemma~\ref{lem:span}, on ray-spanned
subspaces of $V_i$.
\end{proof}

The splitting has a canonical matroid-theoretic and geometric form.

\begin{corollary}[Ray-matroid and product criterion]\label{cor:matroidsplit}
Let $X=X_\Sigma$ be a smooth toric Fano variety, and let
$\mathsf M_\Sigma$ be the linear matroid represented by the ray generators.
Write $C_1,\dots,C_s$ for its connected components, where two rays are
equivalent if they can be joined by a chain of minimal linear dependences,
and put
\[
V_a=\operatorname{span}_\CC\{u_\rho:\rho\in C_a\}.
\]
Then $N_\CC=\bigoplus_aV_a$, and this is the finest decomposition compatible
with all ray lines. It is induced by a product decomposition
\[
X\cong\prod_{a=1}^sX_a,
\qquad
T_X\cong\bigoplus_{a=1}^s p_a^*T_{X_a},
\]
where each $X_a$ is a smooth toric Fano variety with fan in
$N\cap V_a$. Every other equivariant direct-sum decomposition of $T_X$ is
obtained by grouping some of these factors.

If $T_X$ is semistable, every component summand has slope $\mu(T_X)$ and is
semistable. Moreover, $T_X$ is polystable if and only if every $T_{X_a}$ is
$(-K_{X_a})$-stable. Consequently, a strictly semistable $T_X$ is polystable
if and only if $X$ is a nontrivial product of smooth toric Fano varieties
with stable tangent bundles.
\end{corollary}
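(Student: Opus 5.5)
We will first establish the matroid decomposition, then the product structure, and finally the stability statements.

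\emph{Matroid decomposition.} For the linear matroid $\mathsf M_\Sigma$ on the ray generators, the rank function is additive over connected components. Since the rays span $N_\QQ$ (the fan is complete), this gives $\sum_a \dim V_a = n$, hence $N_\CC=\bigoplus_a V_a$.

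\emph{Finest decomposition.} Let $N_\CC=\bigoplus_i W_i$ be any decomposition in which every ray lies in some $W_i$, as in Proposition~\ref{prop:split}. Take a circuit $\sum_{\rho\in C}\lambda_\rho u_\rho=0$ with all $\lambda_\rho\neq 0$. Projecting to each $W_i$ shows that the rays of $C$ lying in $W_i$ satisfy their own relation. By minimality of $C$, all rays of $C$ must lie in a single $W_i$. Hence each component $C_a$ lies in a single $W_i$, and so each $W_i$ is a sum of some of the $V_a$. Together with Proposition~\ref{prop:split}, this gives both finest-ness and the grouping statement.

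\emph{Product structure.} Next we show that the fan splits, i.e.\ $N=\bigoplus_a (N\cap V_a)$ as lattices and every cone of $\Sigma$ is a product of cones in the $V_a$.

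For the lattice statement, take a maximal cone $\sigma$. Its rays form a $\ZZ$-basis of $N$ by smoothness, and they are linearly independent in $N_\QQ$. The number of rays of $\sigma$ lying in $V_a$ is at most $\dim V_a$, and these counts sum to $n$, so equality holds for each $a$. Thus the rays of $\sigma$ lying in $V_a$ span $V_a$ and give a $\ZZ$-basis of $N\cap V_a$, so $N=\bigoplus_a(N\cap V_a)$.

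For the cone statement, let $\Sigma_a$ be the set of faces of cones of $\Sigma$ that are spanned by rays in $C_a$. We must show that $\Sigma_a$ is a complete fan in $V_a$ and that $\Sigma=\prod_a\Sigma_a$. I expect this to be the main obstacle, since a matroid splitting does not in general force the fan to split. Here we use the Fano hypothesis: the facets of the fan polytope $Q=\operatorname{conv}\{u_\rho\}$ correspond to the maximal cones.

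I would argue this with support functions. Each maximal cone $\sigma$ gives a dual vertex $m_\sigma$ of $P$, with $\langle m_\sigma,u_\rho\rangle=-1$ for $\rho\in\sigma$ and $>-1$ otherwise. Decompose $m_\sigma=\sum_a m_\sigma^{(a)}$ along $M=\bigoplus_a (N\cap V_a)^\vee$. Then $\sigma\cap V_a$ is a facet of $\operatorname{conv}(C_a)$ with normal $m_\sigma^{(a)}$. The polytope $Q$ is the free sum of the polytopes $\operatorname{conv}(C_a)$, because the origin is interior to each and these polytopes lie in complementary subspaces. Facets of a free sum are joins of facets of the summands. Hence $\Sigma$ is the product fan, and each $\Sigma_a$ is smooth and complete. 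Each $\Sigma_a$ is also Fano, since its facets are the unimodular simplices $\sigma\cap V_a$ at height one.

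This gives $X\cong\prod_a X_a$ and $T_X\cong\bigoplus_a p_a^*T_{X_a}$.

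\emph{Stability statements.} By Proposition~\ref{prop:split}, every equivariant splitting is a grouping of the summands $\mathcal F_{V_a}$. Semistability of $T_X$ gives $\mu(\mathcal F_{V_a})\le\mu$ for each $a$. Since $T_X=\bigoplus_a\mathcal F_{V_a}$, degrees add and ranks add, so the weighted average of these slopes equals $\mu$ and all of them equal $\mu$. Subsheaves of a summand are subsheaves of $T_X$, so each summand is semistable.

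For polystability, Proposition~\ref{prop:split} reduces it to the existence of a grouping into stable summands of slope $\mu$. A grouping of two or more components $V_a$ is never stable, because any single $\mathcal F_{V_a}$ inside it is a subsheaf of equal slope $\mu$. So the stable summands must be the individual $\mathcal F_{V_a}$.

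It remains to compare $(-K)$-stability of $\mathcal F_{V_a}$ on $X$ with that of $T_{X_a}$ on $X_a$. We have $-K_X=\sum_a p_a^*(-K_{X_a})$, and the facet volumes of $P=\prod_a P_a$ factor: for $\rho\in C_a$, $\Vol(F_\rho)=\Vol(F^{(a)}_\rho)$ times a positive constant depending on $a$. Thus the slope inequalities for subspaces $U\subsetneq V_a$ are the inequalities of Proposition~\ref{prop:criterion} for $X_a$, rescaled by a positive constant. The rescaling also carries the target slope $\mu$ to $\mu(T_{X_a})$, because the equality $\mu(\mathcal F_{V_a})=\mu$ rescales to it. Therefore $\mathcal F_{V_a}$ is stable if and only if $T_{X_a}$ is $(-K_{X_a})$-stable.

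The final sentence of the statement then follows. A stable $T_X$ corresponds to a single component. A strictly semistable but polystable $T_X$ must therefore have $s\ge2$ components, i.e.\ $X$ is a nontrivial product of factors with stable tangent bundles.
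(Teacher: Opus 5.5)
There is a genuine gap in the ``if'' direction of ``$T_X$ is polystable iff every $T_{X_a}$ is $(-K_{X_a})$-stable''.

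Polystability requires all summands $\mathcal F_{V_a}$ to have the common slope $\mu(T_X)$. The only place you establish $\mu(\mathcal F_{V_a})=\mu$ is the averaging argument, and that argument assumes $T_X$ is semistable. You then say the rescaling carries $\mu$ to $\mu(T_{X_a})$ ``because the equality $\mu(\mathcal F_{V_a})=\mu$ rescales to it'', which uses that same hypothesis. In the direction that starts from stable factors, semistability of $T_X$ is part of what must be proved, so the argument is circular.

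The equality is not formal; it is special to the anticanonical polarization. On $\PP^1\times\PP^2$ with $\mathcal O(1,1)$, both factor tangent bundles are stable, yet the summand slopes are $2$ and $3$.

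The fix is to compute the constant you left unspecified in your facet-volume factorization. Set $n_b=\dim V_b$, $d_b=(-K_{X_b})^{n_b}$, and $P=\prod_bP_b$, so that $F_\rho=F^{(a)}_\rho\times\prod_{b\ne a}P_b$ for $\rho\in C_a$. Then
\[
\Vol(F_\rho)=\frac{(n-1)!}{(n_a-1)!\prod_{b\ne a}n_b!}\Bigl(\prod_{b\ne a}d_b\Bigr)\Vol\bigl(F^{(a)}_\rho\bigr),
\qquad
\mu(\mathcal F_{V_a})=\frac{(n-1)!}{\prod_b n_b!}\prod_b d_b=\frac{(-K_X)^n}{n},
\]
which is independent of $a$. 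This is exactly the paper's scaling formula, and with it your stability argument goes through.

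For the converse half of the final sentence you should also note one more point. The factors of a product whose factor tangent bundles are stable are precisely the matroid components, because a stable tangent bundle forces a connected ray matroid.

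Apart from this, your argument is correct, and the product step takes a different route from the paper. You use the Fano hypothesis to identify $\Sigma$ with the face fan of the free sum of the polytopes $\operatorname{conv}(C_a)$. The paper works on the fan directly:
\begin{enumerate}
\item each cone is the direct sum of its intersections with the $V_a$;
\item the $\Sigma_a$ are fans because faces and intersections are componentwise;
\item applying completeness to a sum of interior points shows that every combination of maximal cones occurs.
\end{enumerate}
The paper's route shows that your stated worry is unfounded here: once the rays lie in complementary subspaces, completeness alone forces the fan to split. Your route has the advantage of delivering the Fano property of the factors and the product structure $P=\prod_aP_a$ at once. You still need to justify that $0$ is interior to each $\operatorname{conv}(C_a)$. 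To do so, project onto $V_a$ along the other summands; completeness then makes the rays of $C_a$ positively span $V_a$.
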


\begin{proof}
A circuit cannot meet two summands of a direct-sum decomposition containing
every ray in one summand: projecting its dependence to either summand would
give a dependence on a proper subset of the circuit. Thus every $C_a$ lies in
one summand of every decomposition in Proposition~\ref{prop:split}.
Conversely, a linear dependence meeting two components, chosen with minimal
support, would be such a circuit. Hence the $V_a$ are linearly independent;
since the rays span $N_\CC$, their direct sum is all of $N_\CC$. This proves
the canonical splitting and its fineness.

Choose a maximal cone of $\Sigma$. Its primitive generators form a lattice
basis, and their subsets in the spaces $V_a$ show that
$N=\bigoplus_a(N\cap V_a)$. Every cone of $\Sigma$ is the direct sum of its
intersections with the $V_a$. Because intersections and faces of cones are
componentwise under this direct-sum decomposition, the collections
$\Sigma_a=\{\sigma\cap V_a:\sigma\in\Sigma\}$ are fans. They are complete:
apply completeness of $\Sigma$ to a point of $V_a$. Moreover every combination
of maximal cones of the $\Sigma_a$ occurs in $\Sigma$. Indeed, choose an
interior point in each and apply completeness to their sum; uniqueness of the
component decomposition identifies the resulting intersections. Thus
$\Sigma=\prod_a\Sigma_a$ and $X=\prod_aX_a$. Each factor is smooth and Fano,
and its tangent pullback is the summand $\mathcal F_{V_a}$.

Set $n_a=\dim X_a$ and $d_a=(-K_{X_a})^{n_a}$. For an equivariant subsheaf
$\mathcal G\subseteq T_{X_a}$, the product intersection formula gives
\[
\deg_{-K_X}(p_a^*\mathcal G)=
\frac{(n-1)!}{(n_a-1)!\prod_{b\ne a}n_b!}
\left(\prod_{b\ne a}d_b\right)\deg_{-K_{X_a}}(\mathcal G).
\]
By the Klyachko filtrations, every saturated equivariant subsheaf of
$p_a^*T_{X_a}$ used in the stability test is the pullback of the corresponding
subsheaf of $T_{X_a}$. Thus the strict slope comparisons inside the $a$-th
summand are exactly those for $T_{X_a}$, while
\[
\mu_{-K_X}(p_a^*T_{X_a})=
\frac{(n-1)!}{\prod_b n_b!}\prod_b d_b
\]
is independent of $a$. Consequently, stable factor tangents give a polystable
$T_X$.

Suppose now that $T_X$ is semistable. Each component summand has slope at
most $\mu(T_X)$; since their ranks and degrees add to those of $T_X$, every
component slope equals $\mu(T_X)$. Any subsheaf destabilizing a component would
then destabilize $T_X$, so every component is semistable.

Conversely, suppose $T_X$ is polystable. Lemma~\ref{lem:equivpoly} supplies an equivariant
decomposition into stable equal-slope summands. By fineness, each is a union
of matroid components, but it cannot contain two: either component would be a
proper subsheaf of the same slope. Hence the stable summands are precisely the
component summands, and the factor tangent bundles are stable by the scaling
formula above.
\end{proof}

The census was computed by the finite search over ray partitions of
Proposition~\ref{prop:split}; Corollary~\ref{cor:matroidsplit} gives an
equivalent canonical test using the connected components of the ray matroid.
The two tests agree on every strictly semistable case. Applying them to
Theorem~\ref{thm:main} gives:

\begin{theorem}\label{thm:HE}
Among the strictly semistable varieties, exactly $4$ of $7$ ($n=3$),
$14$ of $24$ ($n=4$), $52$ of $82$ ($n=5$), and $259$ of $366$ ($n=6$) are
polystable. Combined with the stable count, the tangent bundle admits a
Hermitian--Einstein metric with respect to a K\"ahler form representing
$c_1(X)$ for exactly
\[
8 \text{ of } 18 \ (n=3), \qquad
40 \text{ of } 124 \ (n=4), \qquad
161 \text{ of } 866 \ (n=5), \qquad
895 \text{ of } 7622 \ (n=6)
\]
smooth toric Fano varieties.
\end{theorem}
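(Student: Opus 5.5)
The plan is to reduce Theorem~\ref{thm:HE} to a finite exact computation on the strictly semistable cases of Theorem~\ref{thm:main}, using Corollary~\ref{cor:matroidsplit} as the decision procedure and Proposition~\ref{prop:split} as an independent cross-check. By the Kobayashi--Hitchin correspondence \cite{Donaldson1985,UhlenbeckYau1986}, a Hermitian--Einstein metric on $T_X$ with respect to a K\"ahler form in $c_1(X)$ exists exactly when $T_X$ is $(-K_X)$-polystable. The count of Hermitian--Einstein varieties is therefore the stable count of Theorem~\ref{thm:main} plus the number of polystable strictly semistable cases, and unstable cases contribute nothing. The arithmetic $4+4=8$, $26+14=40$, $109+52=161$, $636+259=895$ then yields the second display once the first sentence is established.

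For each of the $7,24,82,366$ strictly semistable varieties, I will first compute the connected components of the ray matroid. Concretely, I take a basis of $N_\QQ$ from a maximal cone, express every ray in it, and join two rays when they occur together in some circuit. Equivalently, I compute the finest block decomposition: two rays lie in the same component if and only if the rank of their union with the others fails to be additive. This is exact linear algebra over $\QQ$.

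If there is a single component, $T_X$ is not polystable, since it is strictly semistable and admits no nontrivial splitting. Otherwise, I read off the factors $X_a$ with fans in $N\cap V_a$ as in Corollary~\ref{cor:matroidsplit}. Each factor is a smooth toric Fano of smaller dimension, so its stability verdict is already available from the lower-dimensional census or the classical surface and curve cases; $\PP^1$ counts as stable. Equivalently, the verdict is obtained directly by restricting the slope test of Proposition~\ref{prop:criterion} to ray-spanned subspaces of $V_a$, using the scaling formula in the proof of the corollary. Then $T_X$ is polystable iff all factor tangent bundles are stable.

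As an independent check, I will run the brute-force partition search of Proposition~\ref{prop:split} and confirm that both methods agree on every case. I will also sanity-check against the known examples: $\dP_2\times\PP^1$ should come out polystable and $\mathrm{Bl}_\ell\PP^3$ should not.

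The main obstacle is not conceptual but lies in the correctness of the equivariant reduction. One needs that polystability can be tested through equivariant splittings, which is Lemma~\ref{lem:equivpoly}, and that the equal-slope boundary for stability of each summand is captured by ray-spanned subspaces. Both are supplied earlier, so the remaining risk is computational. I will mitigate it by requiring the two independent tests to agree, and by checking in dimension $3$ that the four polystable cases coincide with the toric del Pezzo products of Steffens's list.
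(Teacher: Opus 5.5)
Your proposal is correct and follows essentially the paper's route: the counts come from running the splitting tests of Proposition~\ref{prop:split} and Corollary~\ref{cor:matroidsplit} in exact arithmetic on the strictly semistable cases, then adding the stable counts via Kobayashi--Hitchin; the only difference is that the paper uses the partition search as the primary computation and the matroid components as the cross-check, the reverse of your ordering. One correction to your dimension-three sanity check: $\mathbb{F}_1\times\PP^1$ is a toric del Pezzo product that is strictly semistable but not polystable, because $T_{\mathbb{F}_1}$ is only semistable, so the four polystable three-folds are the products $S\times\PP^1$ with $S\in\{\PP^1\times\PP^1,\PP^2,\dP_2,\dP_3\}$ rather than all del Pezzo products.
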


\begin{proof}
By Lemma~\ref{lem:equivpoly}, Proposition~\ref{prop:split}, and
Corollary~\ref{cor:matroidsplit},
polystability of each strictly semistable $T_X$ is decided by the
equivalent splitting tests above, executed in exact arithmetic; combining the
resulting counts with the stable counts of Theorem~\ref{thm:main} and
the Kobayashi--Hitchin correspondence
\cite{Donaldson1985,UhlenbeckYau1986} gives the totals.
\end{proof}

\begin{example}[Semistable but not polystable]
The semistable-but-not-polystable class is the exact locus where
semistability fails to deliver a metric, and its members are structured:
\begin{enumerate}
\item The blow-up $\mathrm{Bl}_\ell \PP^3$ of $\PP^3$ along a line is strictly
semistable but not polystable: the three-dimensional analogue of
the Hirzebruch surface $\mathbb{F}_1$, whose equality subsheaf is the
relative tangent sheaf of the $\PP^1$-fibration structure.
\item $\mathbb{F}_1 \times \PP^1$ is a \emph{product} that is not
polystable:
its equality decomposition requires the factor bundle $T_{\mathbb{F}_1}$,
which
is itself only strictly semistable. Products of semistables are
semistable, but polystability demands stable summands.
\item By contrast $\dP_2 \times \PP^1$ \emph{is} polystable
($T_{\dP_2}$ is stable by \cite{Fahlaoui1989}, and the slopes balance),
so it carries a Hermitian--Einstein tangent bundle although it has no
K\"ahler--Einstein metric.
\end{enumerate}
\end{example}

\section{Comparison with the K\"ahler--Einstein condition}
\label{sec:KE}

For toric Fanos the K\"ahler--Einstein condition is the vanishing of the
barycenter of $P$ \cite{Mabuchi1987,WangZhu2004}, which we compute
exactly for every variety. The comparison runs in two directions.

\emph{K\"ahler--Einstein $\Rightarrow$ polystable.} This is the
Kobayashi--L\"ubke direction \cite{Kobayashi1987,Lubke1983}, and the data
confirm it without exception: no K\"ahler--Einstein variety in the census
is unstable, and every strictly semistable K\"ahler--Einstein variety is
polystable ($3/3$ in dimension $3$, $8/8$ in dimension $4$, $16/16$ in
dimension $5$, $38/38$ in dimension $6$). We emphasize that this is a
useful end-to-end comparison with the independently published
K\"ahler--Einstein classifications in low dimensions. Internally, the
barycenter and slope tests are different functions of the same exact
polytope data. It is also the effective form of the
subspace concentration conditions of \cite{HNS2022,HenkLinke2014}; for
their affine refinements see \cite{Wu2022,FreyerHenkKipp}.

\emph{Polystable $\not\Rightarrow$ K\"ahler--Einstein, overwhelmingly in the
census.}
Fahlaoui's example $\dP_2$ is the smallest instance of what our census
shows to be the dominant behaviour:
\begin{center}
\adjustbox{max width=\textwidth}{%
\begin{tabular}{lrrr}
\toprule
$n$ & stable & of which KE & stable non-KE \\
\midrule
2 & 3 & 2 & 1 \\
3 & 4 & 2 & 2 \\
4 & 26 & 4 & 22 \\
5 & 109 & 7 & 102 \\
6 & 636 & 13 & 623 \\
\bottomrule
\end{tabular}}
\end{center}
In dimension five, $94\%$ of the varieties with stable tangent bundle
admit no K\"ahler--Einstein metric, and in dimension six the fraction
rises to $98\%$ ($623$ of $636$): the K\"ahler--Einstein proportion of
the stable locus continues to decay with dimension. A Hermitian--Einstein
metric on $T_X$ with respect to an anticanonical K\"ahler form is thus a far
weaker, and
far more common, Einstein condition than a K\"ahler--Einstein metric on
$X$, already within the toric world. Counting stable and polystable tangent
bundles together, the number of varieties with this Hermitian--Einstein
metric but without a K\"ahler--Einstein metric is $3$, $28$, $138$, and
$844$ in dimensions $3$, $4$, $5$, and $6$, respectively.

\section{The root-twisted \texorpdfstring{$\dP_3$}{dP3}-fibrations}
\label{sec:roottwist}

Among the $42$ smooth toric Fano five-folds of Picard rank $\ge 7$,
exactly one, polyDB entry \textsf{F.5D.0611}, has stable tangent
bundle. It has $12$ rays, $48$
maximal cones, $(-K)^5 = 2580$, facet degrees
$(278^{\times 6}, 156^{\times 3}, 148^{\times 3})$, and
$\mu(T_X) = 516$ against a maximal subsheaf slope of $506$; its
barycenter is zero, so it is K\"ahler--Einstein. Its structure is revealed by its
primitive collections, all of cardinality two: six of its rays lie in a
rank-two sublattice, forming the $A_2$ root hexagon
$\{\pm\alpha, \pm\beta, \pm(\alpha+\beta)\}$ (the fan of $\dP_3$),
and the projection along that plane maps the remaining six rays onto the
fan of $(\PP^1)^3$. The fibration is not a product: the three base
directions lift with twists $\alpha$, $\beta$, $-(\alpha+\beta)$ in the
fiber plane.

\begin{definition}\label{def:roottwist}
Write $R = \{\pm\alpha, \pm\beta, \pm(\alpha{+}\beta)\}$ for the six
roots, where $\alpha, \beta$ is a basis of a rank-two lattice $N_f$.
Fix $k \ge 0$ and twists $t_1, \dots, t_k \in \{0\} \cup R$, and let
$f_i$ be the $i$-th standard basis vector of $\ZZ^k$. Put
$u_i^+=f_i+t_i$ and $u_i^-=-f_i$. In $N_f\oplus\ZZ^k$, let
$\Sigma(t_\bullet)$ be the fan whose maximal cones are
\[
\operatorname{cone}(\rho,\rho',u_1^{\varepsilon_1},\dots,
u_k^{\varepsilon_k}),
\qquad \varepsilon_i\in\{+,-\},
\]
where $\rho,\rho'$ run over adjacent rays of the hexagonal fan of $\dP_3$.
Define $X(t_1,\dots,t_k)=X_{\Sigma(t_\bullet)}$, of dimension $n=2+k$.
Lemma~\ref{lem:prism} below verifies directly that these cones form the
smooth complete normal fan of a reflexive polytope.
\end{definition}

Each $X(t_1,\dots,t_k)$ is a smooth toric variety fibered over
$(\PP^1)^k$ with fiber $\dP_3$; the untwisted case is
$X(0,\dots,0) = \dP_3 \times (\PP^1)^k$. Every $X(t_\bullet)$ is in fact a
smooth toric \emph{Fano} variety of Picard rank $n + 2$
(Lemma~\ref{lem:prism} below). The variety above is
$X(\alpha, \beta, -(\alpha+\beta))$: its twist multiset is a full orbit of
the order-three rotation $\tau\colon \alpha \mapsto \beta \mapsto
-(\alpha+\beta)$ of the root hexagon, a symmetry that extends to an
automorphism of the variety and forces the barycenter of the moment
polytope to the origin (see the proof of Theorem~\ref{thm:families}), so
$X$ is K\"ahler--Einstein.

The computations to date are all exact: dimensions $\le 6$ use the reference
implementation of Section~\ref{sec:harness}, and every row is independently
reproduced by the rational hexagon formulas below.

\begin{center}
\adjustbox{max width=\textwidth}{%
\begin{tabular}{llrlll}
\toprule
twists & $n$ & $(-K)^n$ & stability & KE & remark \\
\midrule
$(\alpha,-\alpha)$ & 4 & 268 & stable & yes &
the smallest zero-sum pair \\
$(\alpha,\beta)$ & 4 & 278 & stable & \textbf{no} &
the smallest unbalanced stable example \\
$(\alpha,\beta,-(\alpha{+}\beta))$ & 5 & 2580 & stable & yes &
$= $ \textsf{F.5D.0611}, the $\rho{\ge}7$ example \\
$(0,0,0)$ & 5 & 2880 & strictly ss & yes & the product \\
$(\alpha,-\alpha,\beta,-\beta)$ & 6 & 29928 & stable & yes &
zero-sum pairs \\
$(\alpha,\beta,-(\alpha{+}\beta),0)$ & 6 & 30960 & strictly ss & yes &
one untwisted factor \\
$(\alpha,\beta,-(\alpha{+}\beta),\alpha)$ & 6 & 30960 & strictly ss &
no & twist sum $\alpha \ne 0$ \\
$(\alpha,\beta,-(\alpha{+}\beta),\alpha,-\alpha)$ & 7 & 404544 & stable &
\textbf{no} & zero-sum, no fiber symmetry \\
$(\alpha,\beta,-(\alpha{+}\beta))^{\times 2}$ & 8 & 6242544 & stable &
yes & two $\tau$-orbits \\
\bottomrule
\end{tabular}}
\end{center}

\begin{theorem}[zero-sum stability]\label{thm:zerosum}
Let $t_1,\dots,t_k\in R$ be nonzero roots with $\sum_i t_i=0$. Then
$T_{X(t_\bullet)}$ is $(-K)$-stable. In particular, smooth toric Fano
varieties of Picard rank $n+2$ with stable
tangent bundle exist in every dimension $n\ge4$.
\end{theorem}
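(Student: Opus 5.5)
The plan is to apply Proposition~\ref{prop:criterion} to $X=X(t_\bullet)$ and turn every slope inequality into an inequality between explicit integrals over the moment hexagon $H\subset M_{f,\QQ}$ of $\dP_3$. Lemma~\ref{lem:prism} gives the structure of the moment polytope. The conditions $\langle m,u_i^-\rangle\ge-1$ and $\langle m,u_i^+\rangle\ge -1$ cut the coordinate $y_i$ to the interval $[-1-\langle x,t_i\rangle,\,1]$ over each fiber point $x\in H$. Hence $P$ is fibered over $H$ with box fibers of side lengths $\ell_i(x)=2+\langle x,t_i\rangle$, and
\[
(-K)^n=n!\int_H\prod_i\ell_i(x)\,dx .
\]
Each $\ell_i$ is positive on $H$, since $\langle x,t_i\rangle\ge-1$ for a root $t_i$.

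The facet volumes follow directly from this fibration. The facet $F_{u_i^-}$ contributes $(n-1)!\int_H\prod_{j\ne i}\ell_j$, and so does $F_{u_i^+}$. The facet of a hexagon ray $\rho$ contributes $(n-1)!\int_{E_\rho}\prod_j\ell_j$, where $E_\rho$ is the corresponding edge of $H$ with its lattice length measure.

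Next I classify the ray-spanned subspaces $V$. Let $\pi$ be the projection to $\ZZ^k$ and $S$ the set of base indices $i$ such that $u_i^+$ or $u_i^-$ lies in $V$. The basic cases are:
\begin{itemize}
\item $V\cap N_f=0$. Then $V$ is spanned by some $u_i^\pm$, and if it contains both $u_i^+$ and $u_i^-$ it contains $t_i$.
\item $V\cap N_f$ is a root line $L$, possibly together with base directions whose twists lie on $L$.
\item $V\supseteq N_f$.
\end{itemize}
Since $V$ is spanned by rays, $\dim V$ is governed by these data. This lets me write $\mu(\mathcal F_V)$ as a combination of the integrals above. By the convexity/averaging argument in Lemma~\ref{lem:span}, it should suffice to check a finite list of extremal types:
\begin{enumerate}
\item[(a)] $V=\CC u_i^+$ or $\CC u_i^-$;
\item[(b)] $V=\CC u_i^+\oplus\CC u_i^-=\operatorname{span}(f_i,t_i)$ together with the rays of the root line $\CC t_i$, and more generally the span of all $f_i$ with $t_i$ on a fixed root line $L$ together with $L$;
\item[(c)] $V=N_f\oplus$ (span of some $f_i$);
\item[(d)] $V$ equal to a root line alone.
\end{enumerate}
This is the content I would package as Proposition~\ref{prop:reduction}. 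It says stability reduces to two things: the negativity of one hexagon integral per twist, which handles types (a) and (c), and a strict triangle inequality among three "line shares" $s_L$, one per root line, which handles types (b) and (d).

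To verify these inequalities I would expand $\prod_i(2+\langle x,t_i\rangle)$. The zero-sum hypothesis $\sum t_i=0$ kills the linear term, and the symmetry of $H$ under $-1$ kills the odd moments. Concretely, I would write the push-forward of the product density along each root direction as a convolution of centered box distributions. In the variable $s=\langle x,\alpha\rangle$ and its analogues, each factor $2+\langle x,t_i\rangle$ is affine in one of the three root coordinates, and these three coordinates sum to zero on $H$. Positive layer decompositions of $H$ into strips parallel to the root lines then express each required inequality as the integral of a nonnegative function that is not identically zero. I would first try to prove the per-twist negativity by pairing $x$ with $-x$, which reduces it to an inequality $\mathbb E[\ell_i^{-1}]$-type versus the mean. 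I would then prove the triangle inequality by showing that the line shares are controlled by first moments that differ only through the counts $k_L$ of twists on each line.

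I expect the main obstacle to be the strict triangle inequality among the three line shares for arbitrary zero-sum multisets. The counts on the three root lines can be very unbalanced; for example, the twists $(\alpha,-\alpha)^{\times r}$ put everything on one line. The estimate therefore has to be uniform in $k$ and sharp in that extreme. I would attack it with a one-dimensional convolution estimate: log-concavity of the marginal of $\prod\ell_i$ along each root coordinate (Prékopa), compared against the centered box, should give the strict inequality, with equality ruled out because at least one twist is nonzero.

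Finally, the second claim of Theorem~\ref{thm:zerosum} follows from taking $t_\bullet=(\alpha,-\alpha)$ or $(\alpha,-\alpha,\beta,-\beta,\dots)$ to reach even $k\ge2$, and adding one triple $(\alpha,\beta,-(\alpha+\beta))$ to reach odd $k\ge3$. This gives every $n=k+2\ge4$, with Picard rank $n+2$ by Lemma~\ref{lem:prism}.
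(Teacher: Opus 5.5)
Your reduction of stability to two families of hexagon inequalities (negativity of every singleton integral $\varphi_i=\int_QF\,\ell_{t_i}/(2+\ell_{t_i})$, and the strict triangle inequality for the three line shares) is the paper's Proposition~\ref{prop:reduction}. Your examples in every dimension $n\ge4$ also match the paper's. Two small corrections. A ray-spanned $V$ with $V\cap N_{f,\CC}=0$ cannot contain both $u_i^\pm$, since then $t_i\in V$. And single lifts need no $\varphi_i<0$: each splits off as an independent direction of degree $(n-1)!\int_QF/(2+\ell_{t_i})<\mu(T_X)$, which is a weighted-mean argument, not Lemma~\ref{lem:span}.

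\textbf{The genuine gap.} Proving those inequalities for an \emph{arbitrary} zero-sum multiset is the whole analytic content of the theorem, and you only gesture at it. You also never extract how $\sum t_i=0$ actually enters. With $x_j=\ell_{\gamma_j}$ the three root coordinates, it says that the multiplicity of $\gamma_j$ minus that of $-\gamma_j$ is a common $d$. Hence $F=H^d\prod_j(4-x_j^2)^{q_j}$, with $H=\prod_j(2+x_j)$ rotation-invariant and the rest centrally even. Expanding $F$ and cancelling the linear term leaves higher-degree terms of no definite sign, and these dominate for large $k$. Your ``layer decomposition of the hexagon into strips'' is also not what is needed. One must decompose the weight $\prod_j(4-x_j^2)^{q_j}$ as a positive mixture of centered-box indicators \eqref{eq:layer-cake}. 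Even then, three sharp inequalities must be proved on each box.

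\textbf{Why your proposed tools fail.} They break down exactly when $d\ge1$, i.e.\ when the multiset is not negation-invariant.
\begin{itemize}
\item Pairing $x$ with $-x$ then has no pointwise sign. For a positive-root singleton, the paired integrand has the sign of $x_1\bigl(P(x)-P(-x)\bigr)$ with $P=(2+x_1)^{d-1}(2+x_2)^d(2+x_3)^d$. At $x=(1,-\tfrac12,-\tfrac12)\in Q$ one gets $P(x)/P(-x)=\tfrac13(27/25)^d>1$ once $d\ge15$. The paper instead integrates along rays, uses monotonicity in $\omega$, and concludes with a mass-transport comparison (Lemma~\ref{lem:positive-box}). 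It needs a separate slice argument with a moment recurrence for negative roots (Lemma~\ref{lem:negative-box}).
\item Pr\'ekopa only makes the marginal $K$ of the other two factors log-concave, hence unimodal. When $d\ge1$ its mode lies at negative $x_1$, opposite to the tilt of $w_1=(2+x_1)^d(4-x_1^2)^{q_1}$. So the integrand of $I-S_1=-\int_{-1}^1x\,w_1K'\,dx$ is negative between the mode and $0$. Already for $d=1$, $q_j=0$ one finds $K'(0^-)=-1$. What is required is a quantitative comparison of the two tilts, which is the ratio bound \eqref{eq:boxshare-ratio} of Lemma~\ref{lem:boxshare}, proved on every centered box.
\end{itemize}
Finally, ``equality ruled out because a twist is nonzero'' is vacuous, since all twists are nonzero by hypothesis. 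In the case $d=0$ the layered integrand is only nonnegative, and strictness comes from the positive-mass full-box atom.
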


\begin{theorem}[K\"ahler--Einstein classification]\label{thm:roottwist-ke}
Among the zero-sum cases of Theorem~\ref{thm:zerosum},
$X(t_\bullet)$ is K\"ahler--Einstein if and only if the
twist multiset $\{t_1, \dots, t_k\}$ is invariant under negation
$t \mapsto -t$ or under the order-three rotation
$\tau\colon \alpha \mapsto \beta \mapsto -(\alpha+\beta)$ of the root hexagon.
\end{theorem}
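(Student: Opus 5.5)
The plan is to reduce the Kähler--Einstein condition to a single vector-valued integral over the $A_2$ moment hexagon. The symmetric directions will follow from that integral by a symmetry argument, and the converse from a reflection-antisymmetrization argument.

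\emph{Step 1: the moment polytope and the barycenter.} Let $H\subset M_f$ be the moment hexagon of $\dP_3$. Write $M=M_f\oplus\ZZ^k$ with coordinates $(x,y)$. The inequalities are $\langle x,\rho\rangle\ge-1$ for $\rho\in R$, then $\langle x,t_i\rangle+y_i\ge-1$, and $y_i\le 1$. So $P$ fibres over $H$, and over $x\in H$ it is the box $\prod_i[-1-\langle x,t_i\rangle,\,1]$. This box has volume
\[
w(x)=\prod_{i=1}^k\bigl(2+\langle x,t_i\rangle\bigr)
\]
and $y_i$-midpoint $-\tfrac12\langle x,t_i\rangle$. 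Hence the barycenter is, up to the positive factor $\int_H w$,
\[
\Bigl(\textstyle\int_H x\,w(x)\,dx,\ \bigl(-\tfrac12\int_H\langle x,t_i\rangle w(x)\,dx\bigr)_i\Bigr).
\]
The $y$-components are linear images of the first component. So by Mabuchi and Wang--Zhu, $X(t_\bullet)$ is Kähler--Einstein if and only if $B(t_\bullet):=\int_H x\,w(x)\,dx=0$. I would take this computation from Lemma~\ref{lem:prism}, or redo it directly.

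\emph{Step 2: the two symmetric cases.} $H$ is invariant under the dual Weyl group action, in particular under $x\mapsto -x$ and under the dual rotation $\tau^*$.
\begin{itemize}
\item If the multiset is negation-invariant, then $w(-x)=w(x)$, so $x\,w(x)$ is odd and $B=0$.
\item If it is $\tau$-invariant, then $w\circ\tau^*=w$, so $B$ is a fixed vector of the order-three rotation. That rotation has no nonzero fixed vector, so $B=0$.
\end{itemize}

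\emph{Step 3: normal form for the converse.} Let $\gamma=-(\alpha+\beta)$. For $r\in\{\alpha,\beta,\gamma\}$ let $n_{\pm r}$ be the multiplicities, $m_r=\min(n_r,n_{-r})$, and $p_r=n_r-n_{-r}$. The zero-sum condition $p_\alpha\alpha+p_\beta\beta+p_\gamma\gamma=0$ forces $p_\alpha=p_\beta=p_\gamma=:p$. So, with $\varepsilon=\operatorname{sign}p$,
\[
w=\prod_{r}\bigl(4-\langle x,r\rangle^2\bigr)^{m_r}\cdot g,
\qquad g=\prod_r\bigl(2+\varepsilon\langle x,r\rangle\bigr)^{|p|}.
\]
Negation invariance is equivalent to $p=0$. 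Rotation invariance is equivalent to $m_\alpha=m_\beta=m_\gamma$. So in the remaining case $p\neq0$ and, after a Weyl symmetry, $m_\alpha>m_\beta$, and it suffices to show $\langle B,\alpha-\beta\rangle\ne0$.

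\emph{Step 4: antisymmetrize.} Let $\sigma$ be the reflection exchanging $\alpha$ and $\beta$ and fixing $\gamma$; it preserves $H$ and $g$. Put $a=\langle x,\alpha\rangle$, $b=\langle x,\beta\rangle$, $d=m_\alpha-m_\beta$. Write $w=S\cdot(4-a^2)^d$ with $S$ $\sigma$-invariant and positive on the interior of $H$. Averaging over $\sigma$ gives
\[
\langle B,\alpha-\beta\rangle=\tfrac12\int_H (a-b)\bigl[(4-a^2)^d-(4-b^2)^d\bigr]S
=-\tfrac12\int_H (a-b)^2(a+b)\,Q\,S,
\]
where $Q>0$ on $H$. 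Since $a+b=-\langle x,\gamma\rangle$, this becomes $\tfrac12\int_H\langle x,\gamma\rangle\,(a-b)^2QS$. Now split $S$ into its even part $S_0$, which collects all the factors $4-\langle x,r\rangle^2$, and the odd-influenced factor $g$. Symmetrizing under $x\mapsto-x$ then reduces everything to the sign of
\[
\int_H \langle x,\gamma\rangle\, c(x)\,\bigl[g(x)-g(-x)\bigr]\,dx,
\qquad c=(a-b)^2QS_0\ \text{even},\ c\ge0 .
\]

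\emph{Main obstacle.} The remaining task is to show that this integral has the strict sign of $\varepsilon$. With $a+b+c'=0$ we have
\[
(2+\varepsilon a)(2+\varepsilon b)(2+\varepsilon c')=8-(a^2+b^2+c'^2)+\varepsilon abc',
\]
so the odd part of $g$ is driven by $\varepsilon\,abc'$ and is not pointwise correlated with $\langle x,\gamma\rangle$. The factor $(a-b)^2$ must be what supplies the correlation.

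I would try first to pass to explicit hexagon coordinates adapted to the $\gamma$ line and integrate over the layers $\langle x,\gamma\rangle=s$. On each layer, the claim is that the layer integral of $c\,[g(x)-g(-x)]$ has the sign of $\varepsilon s$. I would prove this by a one-dimensional convolution/monotonicity estimate in the variable $a-b$, of the kind used for Theorem~\ref{thm:zerosum}, which works at the extreme exponents. I expect this layer estimate, uniform in $d$, $|p|$ and the $m_r$, to be the hard part of the proof.
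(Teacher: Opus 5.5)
\textbf{There is a genuine gap: the necessity direction is not proved, and the layer estimate you propose for it is false.} Your Steps 1--3 and the sufficiency direction are correct and agree with the paper: Lemma~\ref{lem:prism}(3), the symmetry argument, and the normal form \eqref{eq:zerosum-normal}, with your $\varepsilon$, $|p|$, $m_r$ playing the roles of the global negation, $d$ and the $q_j$. The antisymmetrization in Step 4 is also algebraically right. But necessity rests entirely on the sign of your final integral.

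Put $e=abc'$ and $E=8-a^2-b^2-c'^2$. Then $g=(E+\varepsilon e)^{|p|}$ and $g(x)-g(-x)=2\varepsilon e\,R$, where $R=\bigl((E+e)^{|p|}-(E-e)^{|p|}\bigr)/(2e)>0$ is invariant under all permutations of $(a,b,c')$ and under $x\mapsto-x$. The layer integral of $c\,[g(x)-g(-x)]$ over $\{\langle x,\gamma\rangle=s\}$ is therefore $2\varepsilon s\int_{\{c'=s\}}ab\,(a-b)^2QS_0R$, with your $Q$ and $S_0$.

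On the layer $s=0$ one has $ab=-a^2\le0$. On $|s|=1$ the coordinates $a,b$ have a common sign, so $ab\ge0$. By continuity the layer integrals have sign $-\varepsilon s$ for small $|s|$ and $+\varepsilon s$ for $|s|$ near $1$, whatever $|p|$ and the $m_r$ are. For $X(\alpha,\beta,-(\alpha{+}\beta),\alpha,-\alpha)$, where $Q=S_0=R=1$, the switch occurs at $|s|=\sqrt{15}-3$. So no one-dimensional estimate on individual layers can give a uniform sign; the sign appears only after cancellation between layers.

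The sign you aim for is also reversed. In the same example, the sum of $ab(a-b)^2c'^2$ over the six permutations of a point $(u,v,-(u+v))$ with $0<u<v$ equals $-18u^2v^2(u+v)^2$. So the integral has sign $-\varepsilon$, in agreement with the paper's $M_i<M_j$.

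\textbf{The missing idea} is to fold over the full order-twelve symmetry group of the hexagon rather than slice along one root line. The paper keeps the factor $x_1x_2x_3$ and the fully symmetric positive quotient $R_d$ produced by the central symmetrization \eqref{eq:moment-central}. It splits the hexagon into the twelve images of the chamber $\mathcal T=\{0<u<v,\ u+v<1\}$ and writes $M_i=2\int_{\mathcal T}uv(u+v)\,R_dV_i$. It then proves $V_i<V_j$ pointwise on $\mathcal T$, by pairing the assignments that swap the labels at $i$ and $j$ and grouping them by the label at the third index $k$. That comparison uses the extreme exponents: $q_i$ maximal and $q_j$ minimal, so that $q_k-q_j\ge0$.

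In your notation, choose $\alpha$ of maximal and $\beta$ of minimal $m_r$, and factor the invariant $R$ out of the orbit sum. Then prove that the permutation-orbit sum of $c'^2ab(a-b)^2QS_0$ is negative on the fundamental chamber, as the example computation above illustrates.
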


Two mechanisms underlie the statements. \emph{Stability:} an untwisted factor
leaves an equal-slope summand, while for a balanced collection of nonzero root
twists every proper ray-spanned subspace generates a subsheaf of slope
strictly below $\mu(T_X)$ (Theorem~\ref{thm:zerosum}). The twist is thus a geometric counterpart of the stable
perturbations of semistable tangent sheaves that Napame and Tipler
obtain by varying the polarization or blowing up
\cite{NapameTipler2024}: here the polarization remains anticanonical,
and it is the fibration itself that is deformed.
\emph{The K\"ahler--Einstein criterion:} whenever the twist multiset is
preserved by negation or by the rotation $\tau$, that symmetry extends to
an automorphism of $X(t_\bullet)$, permuting the base factors. The
barycenter of the moment polytope is fixed by the induced map, whose
action on the fibre component is negation resp.\ the rotation (neither
fixes a nonzero vector), so the fibre component of the barycenter
vanishes; and the base components vanish with it, being proportional to it
(Lemma~\ref{lem:prism}(3) below). This proves sufficiency; necessity follows
from the extreme-exponent ordering in Lemma~\ref{lem:moment-order} below.
The statement also
corrects the naive expectation that a
vanishing twist \emph{sum} alone forces the barycenter to zero. The minimal
counterexample to that expectation is the dimension-seven variety
$X(\alpha,\beta,-(\alpha+\beta),\alpha,-\alpha)$: its twists are nonzero
roots summing to zero, but the multiset is invariant under neither symmetry,
and the variety is $(-K)$-stable with a \emph{nonzero} barycenter: a
stable, non-K\"ahler--Einstein tangent bundle beyond the complete census
carried out here.

For comparison, exact computation through dimension eight gives nine
zero-sum classes of nonzero roots up to the hexagon's dihedral symmetry and
reproduces Theorems~\ref{thm:zerosum} and~\ref{thm:roottwist-ke} in every
case ($\tau$-invariance and negation-invariance are marked $\checkmark$):

\begin{center}
\adjustbox{max width=\textwidth}{%
\begin{tabular}{llccc}
\toprule
$n$ & twist multiset & neg-inv. & $\tau$-inv. & K\"ahler--Einstein \\
\midrule
4 & $\alpha,-\alpha$ & $\checkmark$ & & yes \\
5 & $\alpha,\beta,-(\alpha{+}\beta)$ & & $\checkmark$ & yes \\
6 & $\alpha,-\alpha,\alpha,-\alpha$ & $\checkmark$ & & yes \\
6 & $\alpha,\beta,-\alpha,-\beta$ & $\checkmark$ & & yes \\
7 & $\alpha,\alpha,\beta,-\alpha,-(\alpha{+}\beta)$ & & & \textbf{no} \\
8 & $\alpha,\alpha,\alpha,-\alpha,-\alpha,-\alpha$ & $\checkmark$ & & yes \\
8 & $\alpha,\alpha,\beta,\beta,-(\alpha{+}\beta),-(\alpha{+}\beta)$ & & $\checkmark$ & yes \\
8 & $\alpha,\alpha,\beta,-\alpha,-\alpha,-\beta$ & $\checkmark$ & & yes \\
8 & $\alpha,\beta,\alpha{+}\beta,-\alpha,-\beta,-(\alpha{+}\beta)$ & $\checkmark$ & $\checkmark$ & yes \\
\bottomrule
\end{tabular}}
\end{center}

\noindent K\"ahler--Einstein holds in exactly the eight classes with a
symmetry and fails in the single class (dimension seven) with neither.

\subsection*{The prism structure and the stability criterion}
The moment polytopes of the family are explicit enough to begin the proof
of Theorem~\ref{thm:zerosum}. Write $Q \subset M_f \otimes \QQ$,
with $M_f$ the lattice dual to $N_f$, for
the hexagon $\{x : \langle x, \rho\rangle \ge -1 \text{ for all six roots }
\rho\}$ (the anticanonical polytope of $\dP_3$; lattice area $3$, six edges
$E_\rho \subset \{\langle x,\rho\rangle = -1\}$ of lattice length one), set
$\ell_t(x) = \langle x, t\rangle$, and
\[
F(x) \;=\; \prod_{i=1}^{k}\bigl(2 + \ell_{t_i}(x)\bigr).
\]

\begin{lemma}[prism structure]\label{lem:prism}
Let $t_1, \dots, t_k \in \{0\} \cup R$ and
$n = 2 + k$.
\begin{enumerate}
\item $X(t_1, \dots, t_k)$ is a smooth toric Fano variety of Picard rank
$n + 2$, with anticanonical polytope the generalized prism
\[
P \;=\; \bigl\{(x, y) \in (M_f \oplus \ZZ^k) \otimes \QQ \;:\;
x \in Q, \;\; -1 - \ell_{t_i}(x) \,\le\, y_i \,\le\, 1 \bigr\},
\]
whose fibre over $x \in Q$ is a box with side lengths
$2 + \ell_{t_i}(x) \in [1, 3]$.
\item The anticanonical degrees are hexagon integrals:
\[
(-K)^n = n! \int_Q F, \quad
\deg D_{f_i + t_i} = \deg D_{-f_i} = (n{-}1)! \int_Q \frac{F}{2 + \ell_{t_i}},
\quad
\deg D_\rho = (n{-}1)! \int_{E_\rho} F,
\]
with the lattice measure on $E_\rho$; in particular
$\mu(T_X) = (n-1)! \int_Q F$.
\item The barycenter $b = (b_x, b_y)$ of $P$ has fibre component
$b_x = \int_Q x F \,/ \int_Q F$ and base components
$b_{y_i} = -\tfrac12 \langle b_x, t_i \rangle$. In particular
$X(t_\bullet)$ is K\"ahler--Einstein if and only if the hexagon moment
$\int_Q x\, F(x)\, dx$ vanishes.
\end{enumerate}
\end{lemma}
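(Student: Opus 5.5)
The plan is to work directly with the proposed polytope $P$ and recover everything from it. First I compute the facet description of $P$. Its inequalities are $\langle x,\rho\rangle\ge -1$ for the six roots $\rho$, $y_i\le 1$ (normal $-f_i$), and $y_i+\ell_{t_i}(x)\ge -1$, i.e.\ $\langle (x,y), f_i+t_i\rangle\ge -1$. So the inner facet normals are exactly the rays of $\Sigma(t_\bullet)$, each at lattice distance one from the origin; this is the reflexivity of the candidate moment polytope. Since $\ell_{t_i}(x)\in[-1,1]$ on $Q$ (each root is either $0$, $\pm$ a root, and $Q$ is cut out by $\langle x,\rho\rangle\ge-1$ for all $\rho$, so $|\langle x,t\rangle|\le1$), every fibre box has sides $2+\ell_{t_i}(x)\in[1,3]$ and is nondegenerate, so every listed inequality defines a genuine facet. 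To identify the vertices, note that a vertex of $P$ lies over a vertex $v$ of $Q$ (the intersection of two adjacent edges $E_\rho,E_{\rho'}$) and at a vertex of the fibre box, i.e.\ each $y_i$ is at one of its two bounds. The vertices are therefore indexed by (adjacent pair, $\varepsilon\in\{\pm\}^k$), and the normal cone at such a vertex is $\operatorname{cone}(\rho,\rho',u_1^{\varepsilon_1},\dots,u_k^{\varepsilon_k})$. Hence the normal fan of $P$ is $\Sigma(t_\bullet)$, which is therefore complete; $X$ is projective with $-K_X$ ample, so $X$ is Fano.

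Smoothness is checked by a determinant computation: in the basis of $N_f\oplus\ZZ^k$, the matrix of $(\rho,\rho',u^{\varepsilon})$ is block triangular, since $u_i^\pm=\pm f_i+(\text{fibre part})$. Its determinant is $\pm\det(\rho,\rho')=\pm1$, using that adjacent roots form a basis. The Picard rank is $\#\text{rays}-n=(6+2k)-(2+k)=n+2$.

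For (2), I use $(-K)^n=n!\operatorname{vol}(P)$ and $\deg D_\rho=\Vol(F_\rho)$ from \eqref{eq:degrees}. Fubini over $Q$ gives $\operatorname{vol}P=\int_QF$. The facet $y_i=1$ projects isomorphically (as lattices) onto $Q\times\prod_{j\ne i}$(box side), so its normalized volume is $(n-1)!\int_Q F/(2+\ell_{t_i})$. The facet $y_i=-1-\ell_{t_i}(x)$ is a graph over the same domain, and the projection forgetting $y_i$ is a lattice isomorphism of the hyperplane because the coefficient of $y_i$ in the normal $f_i+t_i$ is $1$; so it gives the same value. The facet over $E_\rho$ is the fibration of boxes over the edge, giving $(n-1)!\int_{E_\rho}F$ in lattice measure. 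Then $\mu(T_X)=(-K)^n/n=(n-1)!\int_QF$.

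For (3), the fibre component of the barycenter follows from Fubini again. For $y_i$, the box coordinate is centred at $\bigl(1+(-1-\ell_{t_i})\bigr)/2=-\ell_{t_i}(x)/2$, which is linear in $x$, so $b_{y_i}=-\tfrac12\langle b_x,t_i\rangle$. Thus $b=0$ if and only if $b_x=0$, and by Mabuchi and Wang--Zhu this is the K\"ahler--Einstein criterion. The only mildly delicate step is the lattice-measure normalization on the slanted facets $\{y_i=-1-\ell_{t_i}\}$ and on the edge facets; I will handle both via the unimodular projection argument above.
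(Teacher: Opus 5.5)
Your proposal is correct and follows the paper's proof essentially step for step. Both arguments use the facet description of $P$, locate the vertices over hexagon vertices with unimodular normal cones, apply Fubini for the volume and the barycenter, and compute facet volumes via lattice-isomorphic projections. On the slanted facet you forget $y_i$ where the paper applies the shear $y_i\mapsto -y_i-\ell_{t_i}(x)$, which amounts to the same thing. You leave integrality of the vertices of $P$ (needed for reflexivity) implicit, but it follows at once from your determinant computation: each vertex solves a unimodular integer system with right-hand side $-1$.
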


\begin{proof}
(1) The inequalities $\langle (x,y), u\rangle \ge -1$, over the rays of
Definition~\ref{def:roottwist}, read $x \in Q$ for the six roots and
$-1 - \ell_{t_i}(x) \le y_i \le 1$ for the pair $f_i + t_i$, $-f_i$; this
is $P$. Its vertices are integral: over a vertex $v$ of $Q$ the box
endpoints are $1$ and $-1 - \ell_{t_i}(v)$ with
$\ell_{t_i}(v) \in \{0, \pm 1\}$, and the two never coincide since the
widths lie in $[1,3]$. Every defining inequality supports a facet (a
prism over an edge $E_\rho$ for the root inequalities; the graph of an
affine function over the prism with the $i$-th factor omitted, for the
base inequalities), so $P$ is reflexive with the rays of
Definition~\ref{def:roottwist} as its facet normals. The normal cone at
each of the $6 \cdot 2^k$ vertices is spanned by two adjacent hexagon
roots together with one of $f_i + t_i$, $-f_i$ per factor, a unimodular
basis; hence the normal fan of $P$ is the smooth complete fan of
Definition~\ref{def:roottwist} and $X(t_\bullet)$ is Fano, of Picard rank
$(6 + 2k) - n = n + 2$.
(2) Integrating the box volume gives $\mathrm{vol}(P)$, hence $(-K)^n$ by
\eqref{eq:degrees}. The facet $\{y_i = 1\}$ projects
isomorphically (dropping $y_i$) to the prism with the $i$-th factor
omitted, and the facet $\{y_i = -1 - \ell_{t_i}\}$ is carried to it by the
unimodular shear $y_i \mapsto -y_i - \ell_{t_i}(x)$, giving the base
degrees; the facet over $E_\rho$ is the same prism construction over the
edge. All identifications are lattice isomorphisms, so normalized volumes
agree.
(3) The $y_i$-fibre over $x$ is the interval
$[-1 - \ell_{t_i}(x),\, 1]$, with midpoint $-\ell_{t_i}(x)/2$; hence
$b_{y_i} = -\tfrac12 \int_Q \ell_{t_i} F / \int_Q F =
-\tfrac12 \langle b_x, t_i\rangle$ by linearity of $\ell_{t_i}$. So
$b = 0$ if and only if $b_x = 0$, and the K\"ahler--Einstein claim is
Wang--Zhu \cite{Mabuchi1987,WangZhu2004}.
\end{proof}

The comparisons of Proposition~\ref{prop:criterion} now become inequalities
between hexagon integrals. The bridge is the divergence theorem applied to
the field $x F$ on the reflexive polygon $Q$: since every facet of $Q$ lies
on $\{\langle x, \rho \rangle = -1\}$ with $\rho$ primitive, the Euclidean
boundary factors cancel and
\begin{equation}\label{eq:divergence}
\int_{\partial Q} F \;=\; 2 \int_Q F \;+\; \int_Q \langle x, \nabla F
\rangle, \qquad
\langle x, \nabla F\rangle = F \sum_{i=1}^k
\frac{\ell_{t_i}}{2 + \ell_{t_i}},
\end{equation}
with the boundary integral taken in the lattice measure. Writing
$\Phi(t_\bullet) = \int_Q F \sum_i \ell_{t_i}/(2+\ell_{t_i})$, the fibre
subsheaf $\mathcal{F}_{N_f}$ (the relative tangent sheaf of
$\pi \colon X \to (\PP^1)^k$, and the subsheaf whose slope equality makes
the untwisted product merely polystable) satisfies, by
Lemma~\ref{lem:prism} and \eqref{eq:divergence},
\begin{equation}\label{eq:phigap}
\mu(\mathcal{F}_{N_f}) - \mu(T_X) \;=\; \tfrac{(n-1)!}{2}\, \Phi(t_\bullet).
\end{equation}
Call a proper ray-spanned subspace $V$, or the subsheaf $\mathcal{F}_V$,
\emph{critical} if $\mu(\mathcal{F}_V) = \mu(T_X)$ and \emph{subcritical}
if $\mu(\mathcal{F}_V) < \mu(T_X)$; by
Proposition~\ref{prop:criterion}, stability of $T_X$ is the statement that
every proper ray-spanned subspace is subcritical.

\begin{proposition}\label{prop:partialstab}
Assume $k\ge1$ and let every $t_i$ lie in $\{0\} \cup R$.
\begin{enumerate}
\item If some $t_i = 0$, then $\mu(\mathcal{F}_{\CC f_i}) = \mu(T_X)$
exactly; hence $T_X$ is not stable.
\item If the $t_i$ are all equal to a single nonzero root, then
$\Phi \ge 0$, strictly for $k \ge 2$; hence $T_X$ is not stable, and is
unstable for $k \ge 2$.
\item If the $t_i$ are nonzero roots with $\sum_i t_i = 0$, then
$\Phi < 0$: the fibre subsheaf is subcritical.
\end{enumerate}
\end{proposition}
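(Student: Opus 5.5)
The three parts are direct checks on the hexagon formulas of Lemma~\ref{lem:prism} and on the identity \eqref{eq:phigap}; each reduces to a pointwise sign on $Q$ or to a short symmetrization.

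\emph{Part (1).} Suppose $t_i=0$. Then the span $\CC f_i$ contains the two rays $u_i^+=f_i$ and $u_i^-=-f_i$, and no other ray lies on that line. By Lemma~\ref{lem:prism}(2) each of the two divisors has degree $(n-1)!\int_Q F/(2+\ell_{0})=(n-1)!\tfrac12\int_Q F$. The subsheaf $\mathcal{F}_{\CC f_i}$ therefore has degree $(n-1)!\int_Q F$ and rank one. Its slope is exactly $\mu(T_X)$, so Proposition~\ref{prop:criterion} rules out stability.

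\emph{Part (2).} Suppose every $t_i$ equals a single root $t$. Then $F=(2+\ell_t)^k$ and $\Phi=k\int_Q \ell_t(2+\ell_t)^{k-1}$. I would symmetrize using the central symmetry $x\mapsto -x$ of $Q$, which preserves Lebesgue measure and sends $\ell_t$ to $-\ell_t$. This gives
\[
\Phi=\tfrac k2\int_Q \ell_t\bigl[(2+\ell_t)^{k-1}-(2-\ell_t)^{k-1}\bigr].
\]
Since $|\ell_t|\le1$ on $Q$, both bases are positive, and the bracket has the same sign as $\ell_t$. The integrand is therefore $\ge0$. For $k\ge2$ it is strictly positive off the line $\{\ell_t=0\}$, which has measure zero, so $\Phi>0$. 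For $k=1$ the bracket vanishes and $\Phi=0$. Applying \eqref{eq:phigap}, the fibre subsheaf $\mathcal{F}_{N_f}$ has $\mu(\mathcal{F}_{N_f})\ge\mu(T_X)$, with strict inequality when $k\ge2$. Hence $T_X$ is not stable, and it is unstable for $k\ge2$.

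\emph{Part (3).} This is the only part needing an idea, and the idea is to subtract the vanishing linear term. Since $\sum_i t_i=0$, we have $\sum_i\ell_{t_i}\equiv0$ on $Q$. We may therefore rewrite pointwise
\[
\sum_i\frac{\ell_{t_i}}{2+\ell_{t_i}}=\sum_i\Bigl(\frac{\ell_{t_i}}{2+\ell_{t_i}}-\frac{\ell_{t_i}}2\Bigr)=-\sum_i\frac{\ell_{t_i}^2}{2(2+\ell_{t_i})}\le0.
\]
Each denominator lies in $[1,3]$, and $F\ge1$ on $Q$ by Lemma~\ref{lem:prism}(1). So the integrand of $\Phi$ is $\le0$ everywhere. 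It vanishes only where all $\ell_{t_i}$ vanish, which is a single point because the $t_i$ are nonzero. Hence $\Phi<0$, and \eqref{eq:phigap} shows that the fibre subsheaf is subcritical.

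I expect no real obstacle. The only points to verify are the positivity of the factors $2\pm\ell$ on $Q$, which follows from $|\ell_t|\le1$ there, and the measure-zero exceptional sets that give strictness.
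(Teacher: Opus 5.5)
Your proof is correct and follows the paper's argument. Parts (1) and (2) are identical, and in (3) your subtraction of the vanishing term $\sum_i\ell_{t_i}/2$ is the explicit tangent-line form of the paper's Jensen/strict-concavity step at the mean $\bar\ell=0$; the one harmless slip is that $\bigcap_i\{\ell_{t_i}=0\}$ is a line, not a point, when all twists are $\pm$ a single root (e.g.\ $(\alpha,-\alpha)$), but it is still a null set, so $\Phi<0$ stands.
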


\begin{proof}
(1) With $t_i = 0$ the line $V = \CC f_i$ contains exactly the two rays
$\pm f_i$, each of degree $(n-1)! \int_Q F/2$ by Lemma~\ref{lem:prism}, so
$\mu(\mathcal{F}_V) = (n-1)! \int_Q F = \mu(T_X)$.
(2) With all $t_i = t$, $\Phi = k \int_Q (2+\ell_t)^{k-1} \ell_t$; folding
the centrally symmetric $Q$ along $x \mapsto -x$ gives
$\Phi = k \int_{Q \cap \{\ell_t > 0\}} \ell_t \bigl[(2+\ell_t)^{k-1} -
(2-\ell_t)^{k-1}\bigr] \ge 0$, strict for $k \ge 2$; conclude by
\eqref{eq:phigap}.
(3) The function $u \mapsto u/(2+u)$ is strictly concave for $u > -2$, so
pointwise on $Q$, by Jensen,
\[
\sum_i \frac{\ell_{t_i}(x)}{2 + \ell_{t_i}(x)} \;\le\;
k \, \frac{\bar\ell(x)}{2 + \bar\ell(x)}, \qquad
\bar\ell = \tfrac{1}{k} \, \ell_{\sum_i t_i} = 0,
\]
with equality only where all $\ell_{t_i}(x)$ coincide. A zero-sum multiset
of nonzero roots contains two distinct roots, so the equality locus is
contained in a proper linear subspace and has measure zero. Hence
$\Phi < 0$ strictly since $F \ge 1$ on $Q$.
\end{proof}

Proposition~\ref{prop:partialstab}(1) proves that an untwisted factor always
obstructs stability, while (2) gives a simple class of unbalanced unstable
examples. Part (3) settles, in the zero-sum case, the
slope inequality of the subsheaf that dominates the statistics of
Section~\ref{sec:structure}. In fact the entire stability question reduces
to finitely many hexagon integrals, in a form that proves
Theorem~\ref{thm:zerosum} for every zero-sum multiset. For the $i$-th twist
define its \emph{singleton integral}
$\varphi_i = \int_Q F\, \ell_{t_i}/(2 + \ell_{t_i})$ (so
$\Phi = \sum_i \varphi_i$), and for each of the three root lines
$\{\pm\rho\}$ define the \emph{line share}
\[
S_\rho \;=\; \int_{E_\rho} F \;+\; \int_{E_{-\rho}} F
\;+\; \sum_{i \,:\, t_i = \pm\rho} \max(0,\, -\varphi_i).
\]

\begin{proposition}[reduction]\label{prop:reduction}
Let all $t_i$ be nonzero roots. Then $T_X$
is $(-K)$-stable if and only if
\begin{enumerate}
\item[(a)] $\varphi_i < 0$ for every $i$, and
\item[(b)] $S_\rho < \int_Q F$ for each of the three root lines.
\end{enumerate}
Moreover, if (a) holds then $S_\alpha + S_\beta + S_{\alpha+\beta} =
2\int_Q F$, so (b) states exactly that the three line shares satisfy the
strict triangle inequality.
\end{proposition}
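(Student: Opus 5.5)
The plan is to classify every proper ray-spanned subspace $V\subsetneq N_\CC$ by how it meets the fibre plane, and to write its slope defect $\deg_{-K}\mathcal F_V-\dim V\cdot\mu(T_X)$ explicitly in hexagon integrals using Lemma~\ref{lem:prism}. Throughout I divide all degrees by $(n-1)!$. Put $I=\int_Q F$ (so $\mu(T_X)=I$), $e_\rho=\int_{E_\rho}F$ and $g_i=\int_Q F/(2+\ell_{t_i})$, so that each of $u_i^\pm$ has degree $g_i$. The identity $\ell/(2+\ell)=1-2/(2+\ell)$ gives
\[
\varphi_i = I-2g_i .
\]
Moreover, since $2+\ell_{t_i}\ge1$ on $Q$ with equality only on a measure-zero set, we have $g_i<I$ strictly.

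The structural step is to describe $V$ combinatorially. Let $V_f=V\cap N_\CC^f$, where $N_\CC^f = N_f\otimes\CC$, and let $d=\dim V_f\in\{0,1,2\}$. Let $A$ be the set of indices $i$ with both $u_i^\pm\in V$, and $B$ the set with exactly one of them in $V$. If $i\in A$ then $t_i=u_i^++u_i^-\in V_f$. Conversely, if $t_i\in V_f$ and one of $u_i^\pm$ lies in $V$, then so does the other; hence $i\in B$ forces $t_i\notin V_f$. Since $V$ is spanned by rays, its projection to $\CC^k$ is spanned by the $f_i$ with $i\in A\cup B$, and its kernel is $V_f$. Therefore $\dim V=d+|A|+|B|$. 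The rays of $V$ lying in $N_f$ are the roots in $V_f$, so the defect is
\[
\delta(V)=\sum_{\rho\in V_f}e_\rho-dI-\sum_{i\in A}\varphi_i+\sum_{i\in B}(g_i-I).
\]
The last sum is strictly negative whenever $B\neq\emptyset$. Stability therefore reduces to asking whether $\delta(V)<0$ for all admissible $(V_f,A,B)$, and it suffices to examine $B=\emptyset$ together with the worst choice of $A$.

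Next I run through the three values of $d$.
\begin{itemize}
\item If $d=0$, then $A=\emptyset$ (every $t_i$ is nonzero) and $B\ne\emptyset$, so $\delta<0$ automatically.
\item If $d=2$, then $V_f=N_\CC^f$, and every $t_i$ lies in $V_f$, which forces $B=\emptyset$. The divergence identity \eqref{eq:divergence} gives $\sum_\rho e_\rho=2I+\Phi$, hence
\[
\delta(V)=\Phi-\sum_{i\in A}\varphi_i=\sum_{i\notin A}\varphi_i .
\]
Properness of $V$ means exactly that the complement of $A$ is nonempty. Negativity for every nonempty complement is equivalent to (a): taking the complement to be a singleton gives necessity, and summing negative terms gives sufficiency.
\item If $d=1$, then $V_f$ must be a root line $\CC\rho$, since a ray-spanned fibre line either contains the rays $\pm\rho$ or, if it contains no root, gives a contribution that only lowers $\delta$. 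The set $A$ ranges over subsets of $\{i: t_i=\pm\rho\}$, and every such choice gives a proper subspace because $d<2$. The maximum of $\delta$ is therefore $e_\rho+e_{-\rho}+\sum_{t_i=\pm\rho}\max(0,-\varphi_i)-I=S_\rho-I$, so this case is equivalent to (b).
\end{itemize}
Combined with Proposition~\ref{prop:criterion}, these three cases give the equivalence.

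For the \emph{moreover} clause, I sum the line shares. By \eqref{eq:divergence}, $\sum_\rho(e_\rho+e_{-\rho})=2I+\Phi$. Under (a) every $\max(0,-\varphi_i)$ equals $-\varphi_i$, and each $i$ belongs to exactly one root line, so these terms sum to $-\Phi$. Hence
\[
S_\alpha+S_\beta+S_{\alpha+\beta}=2I ,
\]
and $S_\rho<I$ is the same as $S_\rho<2I-S_\rho$, which is the sum of the other two shares. That is the strict triangle inequality.

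The main obstacle I expect is the bookkeeping in the structural step. I need to be sure that the dimension formula $\dim V=d+|A|+|B|$ holds exactly for ray-spanned $V$. I also need to confirm that no configuration is missed, in particular one where a single base ray $u_i^+$ contributes a fibre direction $t_i$ without the companion ray $u_i^-$ being present. The argument above that $t_i\in V_f$ forces both rays into $V$ handles this, but I would double-check it against the definition of ray-spanned subspaces before relying on it.
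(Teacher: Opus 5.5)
Your proposal is correct and follows the paper's proof essentially step for step. It splits by $V\cap N_{f,\CC}\in\{0,\text{root line},N_{f,\CC}\}$, and your term $\sum_B(g_i-I)<0$ is the paper's weighted-mean argument that single lifts only lower the slope, written additively. The full-fibre case reduces to (a), the root-line case with the worst pair set gives (b), and the share identity comes from \eqref{eq:divergence}.

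The bookkeeping worry you raise is already settled by your own argument: the projection/kernel count gives $\dim V=d+|A|+|B|$, and $t_i\in V_f$ forces both lifts into $V$. In the $d=1$ case, the ``no root'' alternative cannot actually occur, because $V_f$ is spanned by the roots in $V$ together with the $t_i$, $i\in A$. Your fallback bound $\delta\le -I$ covers that case anyway.
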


\begin{proof}
Every ray-spanned subspace $V$ meets $N_{f,\CC}$ in $0$, a root line, or
$N_{f,\CC}$: a single lift $f_i + t_i$ or $-f_i$ contributes no fibre
direction to a span, while the pair contributes exactly $\CC t_i$.

\emph{Single lifts never help.} If $V$ contains single lifts, its ray
set splits off theirs, and $\mu(\mathcal{F}_V)$ is a weighted mean of
the slope of the remaining part and of base degrees
$(n-1)!\int_Q F/(2+\ell_{t_j})$, each strictly below $\mu(T_X)$ because
$2 + \ell_{t_j} > 1$ off a set of measure zero. In particular a subspace
spanned by single lifts alone (the case $V \cap N_{f,\CC} = 0$) is
subcritical. Thus, for testing stability, it suffices to consider
configurations built from pairs and roots only.

\emph{Case $V \supseteq N_{f,\CC}$,} with pair set
$A \subsetneq \{1, \dots, k\}$: Lemma~\ref{lem:prism} and
\eqref{eq:divergence} turn $\mu(\mathcal{F}_V) < \mu(T_X)$ into
$\sum_{i \notin A} \varphi_i < 0$. As $A$ varies these conditions are
equivalent, termwise, to (a): taking $A$ to omit a single index $j$
shows $\varphi_j < 0$ is necessary, and conversely under (a) every sum
$\sum_{i \notin A} \varphi_i$ of negative terms is negative.

\emph{Case $V \cap N_{f,\CC} = \CC\rho$,} with pair set
$A \subseteq \{i : t_i = \pm\rho\}$: the condition reads
$\int_{E_\rho}F + \int_{E_{-\rho}}F - \sum_{A}\varphi_i < \int_Q F$, and
the worst $A$ gives (b).

\emph{The share identity.} The six edge integrals sum to
$\int_{\partial Q} F$, every twist lies on exactly one root line, and
under (a) the corrections total $-\Phi$; so \eqref{eq:divergence} gives
$\sum_\rho S_\rho = \bigl(2\int_Q F + \Phi\bigr) - \Phi = 2\int_Q F$.
\end{proof}

In $X(\alpha, \beta, -(\alpha{+}\beta))$, for instance, the maximal slope
$506$ against $\mu = 516$ is the $N_f$-configuration omitting one twist,
and the gap $10$ equals $-6\varphi_1$ (the three $\varphi_i$ coincide by
rotation symmetry). We now prove that the criterion is decisive for every
zero-sum multiset.

Put
\[
 \gamma_1=\alpha,\qquad \gamma_2=\beta,\qquad
 \gamma_3=-(\alpha+\beta),\qquad x_j=\ell_{\gamma_j};
\]
thus $x_1+x_2+x_3=0$ and $Q=\{|x_j|\le1\}$. If $p_j,q_j$ are the
multiplicities of $\gamma_j,-\gamma_j$, respectively, then the twist sum
vanishes exactly when
\[
 p_1-q_1=p_2-q_2=p_3-q_3=:d.
\]
Global negation of the roots is induced by the automorphism $-1$ of the
hexagon and gives an isomorphic root-twist variety. Applying it if necessary,
we may assume $d\ge0$. Hence
every nonempty zero-sum multiset has the unique normal form
\begin{equation}\label{eq:zerosum-normal}
 F=H^d\prod_{j=1}^3(4-x_j^2)^{q_j},\qquad
 H=\prod_{j=1}^3(2+x_j),\qquad d,q_j\in\ZZ_{\ge0}.
\end{equation}
For $0<a_i\le1$, write
\[
 K(a_1,a_2,a_3)=\{x_1+x_2+x_3=0:\ |x_i|\le a_i\}.
\]
The normalization of Lebesgue measure on this plane will not matter in the
next two sign lemmas.

\begin{lemma}[positive-root box inequality]\label{lem:positive-box}
For $d\ge1$ and $j\in\{1,2,3\}$,
\[
 \int_{K(a_1,a_2,a_3)}\frac{x_jH^d}{2+x_j}\,dx<0.
\]
\end{lemma}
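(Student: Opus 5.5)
Fix $j=1$; the other indices are the same statement after relabelling the $\gamma_j$. The plan is to reduce the two-dimensional integral to a one-dimensional one by a fibrewise integration (the convolution structure), and then prove the one-dimensional sign statement by pairing $s$ with $-s$. Write the integrand as
\[
\frac{x_1H^d}{2+x_1}=x_1\,(2+x_1)^{d-1}\bigl((2+x_2)(2+x_3)\bigr)^d .
\]
On the plane $x_1+x_2+x_3=0$ use coordinates $s=x_1$ and $u=(x_2-x_3)/2$, so that $x_2=-s/2+u$ and $x_3=-s/2-u$. Then
\[
(2+x_2)(2+x_3)=c(s)^2-u^2,\qquad c(s)=2-\tfrac{s}{2}.
\]
Since the measure normalization is irrelevant, the integral becomes $\int_{-a_1}^{a_1} s\,(2+s)^{d-1}J(s)\,ds$, where
\[
J(s)=\int_{I(s)}\bigl(c(s)^2-u^2\bigr)^d\,du,\qquad I(s)=\{u:\ |u-s/2|\le a_2,\ |u+s/2|\le a_3\}.
\]

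The first key step is a symmetry observation. Because $K$ is centrally symmetric, $I(-s)=-I(s)$. The integrand in $u$ is even, so $J(-s)$ is the same integral over $I(s)$ but with $c(s)$ replaced by $c(-s)=2+s/2$. Pairing $s$ with $-s$, it suffices to show, for $0<s\le a_1$,
\[
(2+s)^{d-1}\int_{I(s)}(c_-^2-u^2)^d\,du\;<\;(2-s)^{d-1}\int_{I(s)}(c_+^2-u^2)^d\,du,\qquad c_\mp=2\mp\tfrac s2 .
\]
For $d=1$ this is immediate, since $c_-<c_+$ pointwise. In general, factor out $c^{2d}$. Since $1-u^2/c^2$ increases with $c$, the ratio of the two $u$-integrals is at most $(c_-/c_+)^{2d}$. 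This reduces the whole inequality to the scalar comparison
\[
(d-1)\operatorname{artanh}(s/2)\;<\;2d\,\operatorname{artanh}(s/4),\qquad 0<s\le 1 .
\]
Its linear terms give the right sign with margin $s/2$.

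The main obstacle is this last step. The cubic terms of the artanh expansions grow like $d\,s^3/32$, so the crude bound "ratio $\le(c_-/c_+)^{2d}$" loses for large $d$ near $s=1$. Equivalently, $\log\bigl((2+s)^{d-1}c(s)^{2d}\bigr)$ is not monotone on $[-1,1]$, so a plain Chebyshev-type ordering of $s$ against the weight also fails. My first attempt to repair this keeps the $u$-integration: for large $d$ the factor $(c^2-u^2)^d$ penalizes the smaller center $c_-$ more strongly away from $u=0$. I would bound the ratio $J_-/J_+$ by $(c_-/c_+)^{2d}$ times the average of $\bigl((1-u^2/c_-^2)/(1-u^2/c_+^2)\bigr)^d$ over $I(s)$, using that $I(s)$ has half-width at least $\min(a_2,a_3)-s/2$.

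If that is still not uniform in $d$, I would abandon pointwise pairing and use a layer decomposition instead. Expand $H^d$ via $H=8+2e_2+e_3$, where $e_2=-\tfrac12\sum x_i^2\le0$. For each monomial layer, the odd part in $x_1$ of $x_1\,(\text{layer})/(2+x_1)$ integrates over the centered box $K$ to a quantity whose sign is fixed by a one-dimensional centered-box convolution estimate in $s$. I would then show the negative leading layer dominates the others.
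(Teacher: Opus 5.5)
There is a genuine gap. It sits where you located the obstacle, but it is worse than a lossy estimate: the slice-by-slice inequality you reduce to is itself false for $d\ge15$, so no sharpening of the ratio bound can close the argument.

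Your setup is correct, and the crude bound $J(s)/J(-s)\le(c_-/c_+)^{2d}$ does settle $1\le d\le14$. Indeed, $g(s)=(d-1)\operatorname{artanh}(s/2)-2d\operatorname{artanh}(s/4)$ satisfies $g'(s)<0$ exactly when $(3d+1)s^2<16$. Hence $g<0$ on $(0,1]$ if and only if $g(1)<0$, i.e.\ $(d-1)\ln 3<2d\ln\tfrac53$, i.e.\ $d\le14$.

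Now take the admissible box $K(1,\tfrac12,\tfrac12)$. For $s<1$ near $1$ one has $I(s)=[-\tfrac{1-s}{2},\tfrac{1-s}{2}]$, which collapses to $\{0\}$. Hence $J(s)/J(-s)\to(3/5)^{2d}$. The paired integrand $s\bigl[(2+s)^{d-1}J(s)-(2-s)^{d-1}J(-s)\bigr]$, divided by $s(2-s)^{d-1}J(-s)>0$, tends to $\tfrac13(27/25)^d-1$. This limit is positive for every $d\ge15$, so the paired slices near $s=1$ contribute positively.

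Your first repair therefore cannot work. Near $s=a_2+a_3$ the half-width of $I(s)$ is $\tfrac12(a_2+a_3-s)$, so there is nothing to exploit in the $u$-variable. Even on the full hexagon $K(1,1,1)$, where $I(1)=[-\tfrac12,\tfrac12]$, Laplace's method shows the $u$-average gains only a bounded factor, asymptotically $c_-/c_+=3/5$. That cannot offset the exponential loss $(27/25)^d$, so the paired slices near $|x_1|=1$ are again positive for large $d$. The lemma holds only because slices with small $|x_1|$ outweigh these, so a global comparison across different values of $|x_1|$ is unavoidable.

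Your second fallback does not supply one. The layers of $H^d=(8+2e_2+e_3)^d$ do not share a sign. By central symmetry, an $e_3$-free layer $L=8^i(2e_2)^j$ contributes $-\int_K L\,x_1^2/(4-x_1^2)$, which is positive for odd $j$ since $e_2\le0$. The claim that the leading layer dominates is precisely the missing estimate.

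For comparison, the paper pairs opposite polar rays rather than slices. A ray $t(a,b,c)$ with $a>0$, $0\le t\le1$, contributes a positive multiple of $a\int_0^1t^2\{P(t)-P(-t)\}\,dt$, where $P(t)=(2+at)^{d-1}(2+bt)^d(2+ct)^d$.
\begin{itemize}
\item If $bc\le0$, a log-derivative identity gives $P(t)<P(-t)$ pointwise.
\item If $bc>0$, the integral is nondecreasing in $\omega=uv$ (with $b=-u$, $c=-v$). This reduces to the symmetric ray $b=c=-a/2$, which is exactly the direction of the bad points above; for $a=1$, $d\ge15$ the pointwise comparison fails there near $t=1$.
\end{itemize}
On that ray the paper proves the one-variable integral negative by an integrated mass comparison. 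After substitution it becomes the mass of $f(z)=(z-\tfrac13)^2z^{d-1}(1-z)^{2d}$ to the right of $\tfrac13$ minus the mass to its left. The involution $r\mapsto1/(4r)$, $r=z/(1-z)$, maps the right interval into the left one with transported density ratio exceeding $1$. Concretely, it pairs the point at parameter $q>0$ on the positive side with the point at the smaller parameter $4q/(4+q)$ on the negative side. This non-isometric pairing with points closer to the origin is the idea your proposal lacks.
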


\begin{proof}
By relabeling take $j=1$. Pair opposite polar rays of the centrally
symmetric convex body $K$. Normalize a nontrivial pair as
$tw=(at,bt,ct)$, $0\le t\le1$, where $a>0$, $a+b+c=0$, and
$|a|,|b|,|c|\le1$. Apart from a positive angular factor, its contribution is
\begin{equation}\label{eq:raypair}
 J=a\int_0^1t^2\{P(t)-P(-t)\}\,dt,\qquad
 P(t)=(2+at)^{d-1}(2+bt)^d(2+ct)^d.
\end{equation}
If $bc\le0$, then, for $L=\log(P(t)/P(-t))$,
\[
 L'=4\left(d\sum_{\xi\in\{a,b,c\}}
       \frac{\xi}{4-t^2\xi^2}-\frac{a}{4-t^2a^2}\right),
\]
and putting the first sum over a common denominator gives
\[
 \sum_{\xi\in\{a,b,c\}}\frac{\xi}{4-t^2\xi^2}
 =\frac{abc\,t^2\bigl(12-\tfrac12(a^2+b^2+c^2)t^2\bigr)}
        {\prod_{\xi\in\{a,b,c\}}(4-t^2\xi^2)}\le0.
\]
Thus $L'<0$, and $P(t)<P(-t)$ for $t>0$.

Suppose $bc>0$. Write $b=-u$, $c=-v$, $a=u+v$, and
$\omega=uv\le a^2/4$. Then $(2+bt)(2+ct)=4-2at+\omega t^2$, so
$P(t)=(2+at)^{d-1}(4-2at+\omega t^2)^d$ and
$\partial_\omega P(t)=d\,t^2H_\omega(t)^{d-1}$, where
\[
 H_\omega(t)=(2+at)(4-2at+\omega t^2)=8-2(a^2-\omega)t^2+a\omega t^3.
\]
Regarding \eqref{eq:raypair} as $J(a,\omega)$ and differentiating gives
\[
 \partial_\omega J
 =ad\int_0^1t^4\{H_\omega(t)^{d-1}-H_\omega(-t)^{d-1}\}\,dt\ge0.
\]
It is therefore enough to take $u=v=a/2$. With $q=at$,
$J(a,a^2/4)=a^{-2}K_d(a)$, where
\[
 K_d(a)=\int_0^a q^2\left[(2+q)^{d-1}(2-q/2)^{2d}
              -(2-q)^{d-1}(2+q/2)^{2d}\right]dq.
\]
Using $z=(2+q)/6$ in the first term and $z=(2-q)/6$ in the second,
$K_d(a)$ is a positive constant times the mass to the right of $1/3$
minus the mass to its left for
\[
 f(z)=(z-1/3)^2z^{d-1}(1-z)^{2d}.
\]
Put $r=z/(1-z)$. The involution $r\mapsto r'=1/(4r)$ maps the right
interval into the left one. Including $|dr'/dr|=1/(4r^2)$, the ratio of
the transported left density to the right density is
\[
 \mathcal Q(r)=
 \frac{4^{2d+2}r^{d+1}(1+r)^{3d+3}}{(4r+1)^{3d+3}}.
\]
Here $\mathcal Q(1/2)=1$ and
\[
 (\log\mathcal Q)'=
 \frac{(d+1)(2r-1)^2}{r(1+r)(4r+1)}>0\qquad(r>1/2).
\]
Thus the left mass is strictly larger, so $K_d(a)<0$. Every paired ray
with $a\ne0$ contributes negatively; the directions with $a=0$ contribute
zero and have angular measure zero. This proves the lemma.
\end{proof}

\begin{lemma}[negative-root box inequality]\label{lem:negative-box}
For $d\ge1$ and $j\in\{1,2,3\}$,
\[
 \int_{K(a_1,a_2,a_3)}x_jH^d(2+x_j)\,dx>0.
\]
\end{lemma}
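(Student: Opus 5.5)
The plan is to mirror the proof of Lemma~\ref{lem:positive-box}. Take $j=1$ by relabeling and pair opposite polar rays $\pm tw$, $tw=(at,bt,ct)$ with $a>0$, $a+b+c=0$, $0\le t\le1$. Up to a positive angular factor, each pair contributes
\[
 J=a\int_0^1t^2\{P(t)-P(-t)\}\,dt,\qquad P(t)=(2+at)^{d+1}(2+bt)^d(2+ct)^d=(2+at)^{d+1}H(t)^d .
\]
It then suffices to show $J>0$ for every ray with $a\neq0$. The rays with $a=0$ contribute zero and have angular measure zero.

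The first step is the cubic expansion of $H$ along the ray. Using $a+b+c=0$,
\[
 H(t)=8-2\sigma t^2+abc\,t^3,\qquad \sigma=\tfrac12(a^2+b^2+c^2),
\]
so $H(t)-H(-t)=2abc\,t^3$. Both $H(\pm t)$ are positive, since every factor $2\pm\xi t$ lies in $[1,3]$.

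\emph{Case $bc\ge0$.} Here $abc\ge0$, so $H(t)\ge H(-t)$. Also $(2+at)^{d+1}>(2-at)^{d+1}$ for $t>0$. Hence $P(t)>P(-t)$ pointwise and $J>0$ immediately. Note that this is the case which required the $\omega$-monotonicity argument in the positive lemma; here the extra factor $(2+x_1)$ has the favourable sign, so it is trivial.

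\emph{Case $bc<0$.} This is the main obstacle, because now $H(t)<H(-t)$ and the factor $(2+at)^{d+1}$ must win. Write $L=\log\bigl(P(t)/P(-t)\bigr)$. As in the positive lemma,
\[
 L'=4\Bigl(d\sum_{\xi\in\{a,b,c\}}\frac{\xi}{4-t^2\xi^2}+\frac{a}{4-t^2a^2}\Bigr),
\]
and the sum equals $abc\,t^2(12-\sigma t^2)/\prod_\xi(4-t^2\xi^2)$, which is now $\le0$. I would try first to prove $L'>0$ pointwise on $(0,1]$. To do this, group $P=(2+at)\cdot\bigl[(2+at)H(t)\bigr]^d$ and show that $g(t)=(2+at)^2(2+bt)(2+ct)$ satisfies $g(t)\ge g(-t)$. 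This reduces to the rational inequality
\[
 a\,(4-b^2t^2)(4-c^2t^2)\ \ge\ |abc|\,t^2(12-\sigma t^2),
\]
that is, $(4-b^2t^2)(4-c^2t^2)\ge|bc|\,t^2(12-\sigma t^2)$, on the region $|a|,|b|,|c|\le1$. Without loss of generality $b>0>c=-(a+b)$, so $b\le1$, $a+b\le1$, and $|bc|=b(a+b)\le1$. The remaining single factor $(2+at)/(2-at)>1$ then gives strictness.

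If the pointwise inequality fails near the degenerate corner $a\to0$, $|b|,|c|\to1$, I would fall back on an integrated version, exactly as the positive lemma fell back on $K_d$. The fallback has three parts. First, use the sign of $\partial J/\partial(bc)$, computed as there by writing $(2+bt)(2+ct)=4-2at\cdot(\text{sign})+\omega t^2$, to reduce to the extreme boundary configuration. On the box $K$ this extreme is $|c|=a_3$ or $|b|=a_2$, that is, a vertex direction. Second, after the substitution $q=at$, compare the right and left masses of a one-variable density via an involution, as in the $\mathcal Q(r)$ computation. Third, near $a\to0$ use that $J/a\to\int_0^1t^2\{\dots\}$ stays controlled by the $t^3$-order terms, and check the leading coefficient $(d+1)a-d\,abc\cdot(\text{const})>0$.

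Summing the positive contributions of all ray pairs then gives the claimed strict inequality.
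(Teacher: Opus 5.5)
Your case $bc\ge0$ is correct. The plan for $bc<0$, however, has a genuine gap. The ray-wise inequality $J>0$ to which you reduce the lemma is false in general, so neither the pointwise argument nor any of the proposed fallbacks can establish it.

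First, an algebra slip. $P(t)=(2+at)^{d+1}(2+bt)^d(2+ct)^d$ equals $(2+at)H(t)^d$, not $(2+at)^{d+1}H(t)^d$, so the grouping $P=(2+at)\bigl[(2+at)H(t)\bigr]^d$ is wrong. The inequality $g(t)\ge g(-t)$ is true, since $g(t)-g(-t)=4at\bigl(4-(b^2+c^2)t^2\bigr)$, but it only settles $d=1$. In general $P(t)/P(-t)=\frac{2+at}{2-at}\bigl(H(t)/H(-t)\bigr)^d$ with $H(t)<H(-t)$. So at any fixed $t>0$ this ratio tends to $0$ as $d\to\infty$, and pointwise positivity fails.

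Worse, the $t$-integral does not rescue the ray. Take $K(1,1,1)=Q$ and the boundary direction $w=(\varepsilon,1-\varepsilon,-1)$. Since $P(t)-P(-t)$ is odd in $(at,\,abc\,t^3)$, one has $P(t)-P(-t)=2at\,E^{d-1}(E+2d\,bc\,t^2)+O(\varepsilon^3)$ with $E=8-2\sigma t^2$. Letting $\varepsilon\to0$ gives
\[
 J=\varepsilon^2\Bigl(\tfrac12\,6^d-d\int_0^1s^2(8-2s)^{d-1}\,ds+O(\varepsilon)\Bigr),
\]
which for $d=6$ equals $\varepsilon^2\bigl(-5240/7+O(\varepsilon)\bigr)<0$.

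More generally, consider a Laplace expansion at the relevant scale $t\sim d^{-1/2}$. There $at$ and $d\,abc\,t^3$ are comparable, so your small-$t$ leading-coefficient check is not decisive. The expansion shows that for large $d$ the paired ray contribution has the sign of $b^2+3bc+c^2$. This is negative on an open cone of directions with $a\neq0$; only the angular average is positive. The lemma is therefore a genuine cancellation between directions, and no ray-by-ray argument can prove it.

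The paper handles this by pairing whole slices $x_1=\pm s$ instead of rays. Each slice aggregates many directions, and the required comparison becomes the one-dimensional convolution inequality
\[
 \frac{h(s)}{h(-s)}<\Bigl(\frac{2+s}{2-s}\Bigr)^{d+1},
 \qquad h(s)=\int_{J_s}(2+x)^d(2+s-x)^d\,dx,
\]
with $J_s=[-a_2,a_2]\cap[s-a_3,s+a_3]$. It is proved from the moment recurrence $I_{m-1}/I_m\le(m+1)/(mC)$, where $I_m=\int_{J_s}B^m$, $B=(2-x)(2-s+x)$ and $C=2(2-s)$, together with the binomial expansion of $(B+4s)^d$. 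Unlike the ray inequality, this slice inequality keeps a margin as $d\to\infty$, since $\bigl((4+s)/(4-s)\bigr)^2<(2+s)/(2-s)$.
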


\begin{proof}
Take $j=1$, put $b=a_2$, $c=a_3$, and pair the slices $x_1=s$ and
$x_1=-s$. For $0<s<\min(a_1,b+c)$, set
\[
 J_s=[-b,b]\cap[s-c,s+c],\qquad
 h(s)=\int_{J_s}(2+x)^d(2+s-x)^d\,dx.
\]
The paired contribution is positive precisely when
\begin{equation}\label{eq:conv-ratio}
\frac{h(s)}{h(-s)}<\left(\frac{2+s}{2-s}\right)^{d+1}.
\end{equation}
For $s\ge b+c$ both slices have measure zero and contribute nothing.
Reflection in the integral for $h(-s)$ puts both integrals on $J_s$.
On this interval define
\[
 B=(2-x)(2-s+x)=C+sx-x^2,\qquad C=2(2-s),
 \qquad I_m=\int_{J_s}B^m\,dx.
\]
Since $(2+x)(2+s-x)=B+4s$,
\begin{equation}\label{eq:conv-expand}
 \frac{h(s)}{h(-s)}
 =\sum_{k=0}^d\binom dk(4s)^k\frac{I_{d-k}}{I_d}.
\end{equation}
We claim that, for $m\ge1$,
\begin{equation}\label{eq:moment-recurrence}
 \frac{I_{m-1}}{I_m}\le\frac{m+1}{mC}.
\end{equation}
The vertex of $B$ is $s/2$. If $J_s$ contains it, write
$B=q-y^2$, where $q=(2-s/2)^2=C+s^2/4$. Integrating $(yB^m)'$ and
using that the endpoints straddle $y=0$ gives
\[
 (2m+1)I_m\ge2mqI_{m-1}.
\]
This implies \eqref{eq:moment-recurrence}, since
$2q(m+1)-C(2m+1)=C+(m+1)s^2/2>0$.

If $J_s$ misses the vertex, reflection $x\mapsto s-x$ interchanges $b$
and $c$, so assume $J_s$ lies to its left. Then
$J_s=[L,b]\subset[-b,b]$, and $B$ is increasing there. The quotient
$I_{m-1}/I_m$ is the $B^m$-weighted mean of $1/B$; deleting the low-$B$
prefix $[-b,L]$ cannot increase it. It remains to prove
\eqref{eq:moment-recurrence} on $[-b,b]$. Put $P=C-x^2$ and expand
$T=(m+1)I_m-mCI_{m-1}$ in even powers of $s$. For $n=m-2r\ge1$, the
coefficient of $s^{2r}$, apart from the positive factor $\binom m{2r}$,
is
\[
 \begin{split}
 &\int_{-b}^b x^{2r}P^{n-1}\bigl\{(2r+1)C-(m+1)x^2\bigr\}\,dx\\
 &\quad=\bigl[x^{2r+1}P^n\bigr]_{-b}^b
   +n\int_{-b}^b x^{2r+2}P^{n-1}\,dx>0.
 \end{split}
\]
If $n=0$, the final coefficient is plainly positive. Thus $T>0$, and
\eqref{eq:moment-recurrence} follows.

Iteration gives
\[
 \frac{I_{d-k}}{I_d}\le C^{-k}\frac{d+1}{d-k+1}.
\]
Substitution in \eqref{eq:conv-expand} yields
\[
 \frac{h(s)}{h(-s)}
 \le\sum_{k=0}^d\binom{d+1}{k}\left(\frac{4s}{C}\right)^k
 <\left(1+\frac{4s}{C}\right)^{d+1}
 =\left(\frac{2+s}{2-s}\right)^{d+1}.
\]
The omitted $k=d+1$ term makes the inequality strict. This proves
\eqref{eq:conv-ratio}, hence the lemma.
\end{proof}

The line-share inequality needs a sharper derivative form of the same
convolution comparison.

\begin{lemma}[box-share inequality]\label{lem:boxshare}
For $a,b,c\in(0,1]$ and an integer $d\ge0$, put
\[
 K_{b,c}(x)=\int_{[-b,b]\cap[-x-c,-x+c]}
       \bigl((2+u)(2-x-u)\bigr)^d\,du
\]
and
\[
 \mathcal D_d(a,b,c)=-\int_{-a}^a x(2+x)^dK_{b,c}'(x)\,dx.
\]
The function $K_{b,c}$ is Lipschitz (indeed, piecewise polynomial), and
$K_{b,c}'$ denotes its almost-everywhere derivative.
Then $\mathcal D_d(a,b,c)>0$ for $d\ge1$. For $d=0$ it is
nonnegative, and $\mathcal D_0(1,1,1)=1$.
\end{lemma}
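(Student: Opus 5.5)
The plan is to pair the points $x$ and $-x$, exactly as in Lemmas~\ref{lem:positive-box} and~\ref{lem:negative-box}, after first splitting $K_{b,c}'$ into a boundary part and an interior part. The case $d=0$ comes first and fixes the picture. Then the integrand is $1$, so $K_{b,c}(x)$ is the length of $[-b,b]\cap[-x-c,-x+c]$. This is an even, concave trapezoid in $x$, nonincreasing in $|x|$. Hence $-xK_{b,c}'(x)\ge0$ almost everywhere and $\mathcal D_0\ge0$. For $a=b=c=1$ we have $K(x)=2-|x|$ and $K'(x)=-\operatorname{sign}x$, so $\mathcal D_0(1,1,1)=\int_{-1}^1|x|\,dx=1$.

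For $d\ge1$ I would use the substitution $x_1=x$, $x_2=u$, $x_3=-x-u$. Under it $(2+u)(2-x-u)=(2+x_2)(2+x_3)$, so $K_{b,c}(x)$ is the slice integral of $(2+x_2)^d(2+x_3)^d$ over the box $K(a,b,c)$ at $x_1=x$. By Leibniz, $K_{b,c}'(x)$ splits as follows.

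\begin{itemize}
\item The boundary term: the values of the integrand at whichever endpoints of $[-x-c,-x+c]$ are active (interior to $[-b,b]$), with signs $\mp$.
\item The interior term: $-d\int (2+u)^d(2-x-u)^{d-1}\,du$ over the slice.
\end{itemize}

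Accordingly I would write $\mathcal D_d=\mathcal D^{\partial}+\mathcal D^{\circ}$.

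The interior part $\mathcal D^{\circ}=d\int_K x_1(2+x_1)^d(2+x_2)^d(2+x_3)^{d-1}$ has the shape of Lemma~\ref{lem:negative-box} with one exponent lowered. I expect its positivity to follow from the same slice pairing. The key inputs are the expansion \eqref{eq:conv-expand} and the moment recurrence \eqref{eq:moment-recurrence}, applied to the asymmetric product $(2+x)^d(2+s-x)^{d-1}$. These yield a ratio bound $h(s)/h(-s)<\bigl((2+s)/(2-s)\bigr)^{d}$, which is exactly what the weight $s(2+s)^d$ versus $s(2-s)^d$ requires.

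The boundary part is supported where a $c$-constraint is active. Using the reflection $u\mapsto -x-u$, which swaps $b$ and $c$ as in the proof of Lemma~\ref{lem:negative-box}, I would reduce to the case where the moving endpoint is $u=-x+c$. There the integrand equals $(2-x+c)^d(2-c)^d$, and the contribution at $x>0$ enters with a positive sign. At $-x$ the paired endpoint carries the smaller factor $(2+x+c)^d$ set against $(2-x)^d$. I would then check directly that the pair $x(2+x)^d(2-x+c)^d-x(2-x)^d(2+x+c)^d$ has the right sign. If it does not, I would absorb it into the interior term through the integration-by-parts identity $\mathcal D_d=\int K\,\tfrac{d}{dx}[x(2+x)^d]\,dx-[x(2+x)^dK]_{-a}^{a}$.

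The main obstacle will be this boundary bookkeeping. $K_{b,c}$ is only piecewise polynomial, with breakpoints at $|x|=|b-c|$ and $|x|=b+c$ and a truncation at $|x|=a$. The sign of the paired boundary term therefore has to be checked separately on each regime, and whether the boundary and interior parts must be combined before they are separately signed is what I expect to decide the argument. My first attempt would keep them separate and verify each regime by the reflection reduction. I would fall back on the integrated-by-parts form, which only needs $K\ge0$ and a monotonicity of $\log K$ comparable to \eqref{eq:conv-ratio}, if a regime fails.
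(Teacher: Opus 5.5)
Your $d=0$ argument is correct and is the paper's. For $d\ge1$ the split $\mathcal D_d=\mathcal D^{\partial}+\mathcal D^{\circ}$ is legitimate. The gap is that positivity of $\mathcal D^{\circ}$, which carries all the difficulty, does not follow from \eqref{eq:conv-expand} and \eqref{eq:moment-recurrence}. Reflect the slice at $x=s$ onto $I=[-b,b]\cap[s-c,s+c]$ and write $A=(2+x)(2+s-x)$, $B=(2-x)(2-s+x)$. Your slice pairing must then show, for each $s$,
\[
(2-s)^d\int_I(2+x)A^{d-1}\,dx<(2+s)^d\int_I(2-x)B^{d-1}\,dx .
\]
In Lemma~\ref{lem:negative-box} the weight carries one more exponent than the slice. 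The iterated bound \eqref{eq:moment-recurrence} turns $\binom dk$ into $\binom{d+1}k$, so it spends exactly that extra exponent. Here the slice $(2+x)A^{d-1}$ is not a power of one quadratic, so \eqref{eq:conv-expand} only compares $M_A=\int_IA^{d-1}$ with $M_B=\int_IB^{d-1}$. Iterating \eqref{eq:moment-recurrence} gives $M_A/M_B<r^{d}$ with $r=(2+s)/(2-s)$, which already uses up the whole weight $r^d$. Nothing is left for the linear factors, and they work against you: on $I=[-b,b]$ one has $A(x)-A(-x)=B(x)-B(-x)=2sx$, so $\int_IxA^{d-1}>0$ and $\int_IxB^{d-1}>0$ for $d\ge2$. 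Integrating by parts with $A'=B'=s-2x$ turns $2\pm x$ into the constants $(4\pm s)/2$ plus endpoint terms. Up to those terms the requirement becomes $M_A/M_B<r^d(4-s)/(4+s)$, which is \eqref{eq:boxshare-ratio}. There is no slack to spend. For large $d$ with $s/2$ inside $I$, Laplace asymptotics give $M_A/M_B\sim w^{2d-1}$ with $w=(4+s)/(4-s)$. The requirement is $r^d/w$, and $r/w^2=1+O(s^3)$. The bound from \eqref{eq:moment-recurrence} overshoots the requirement by essentially the factor $w>1$.

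The missing ingredient is therefore a genuinely sharper moment estimate. The paper centres at the vertex, writing $B=q-y^2$ with $q=(2-s/2)^2=C+s^2/4$, and uses the half-integer recurrence $(2j+1)N_j\ge2jqN_{j-1}$. This bounds $M_A/M_B$ by the Taylor polynomial of $(1+z)^{m+1/2}$ with $1+z=w^2$, hence by $w^{2m+1}<r^{m+1}/w$, using $w^2<r$. When $[-b,b]$ misses $s/2$, it needs a separate pointwise pairing of $\pm x$ with a chord bound for $(1-\lambda P)^m$; clipped intervals missing the vertex are reduced to that case by prefix deletion. None of this is in your plan. Your fallback does not avoid it either: integrating by parts over $[-a,a]$ rewrites the same quantity with a negative endpoint term $-a\bigl[(2+a)^dK_{b,c}(a)+(2-a)^dK_{b,c}(-a)\bigr]$ and a weight $\tfrac{d}{dx}[x(2+x)^d]$ that can change sign, so it needs the same sharp comparison.

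By contrast, the boundary part you flagged as the main obstacle is easy once $b\le c$ is fixed \emph{before} splitting; the split is not symmetric in $b,c$. For $x>0$ only $u=c-x$ can be active, contributing $x(2+x)^d\bigl((2+c-x)(2-c)\bigr)^d>0$. At $x=-s$ only $u=s-c$ can be active, contributing $s(2-s)^d\bigl((2+s-c)(2+c)\bigr)^d>0$. The paired expression you proposed to check is not the one that arises. In this orientation the integration-by-parts endpoint terms are also nonnegative, since $A(U)-A(L)=B(U)-B(L)=(U-L)(s-U-L)\ge0$. So everything reduces to \eqref{eq:boxshare-ratio}.
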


\begin{proof}
Assume first $d\ge1$. Set $J(s)=K_{b,c}(-s)$ and pair $s$ with $-s$:
\begin{equation}\label{eq:boxshare-pair}
 \mathcal D_d(a,b,c)=\int_0^a sG(s)\,ds,\qquad
 G(s)=(2+s)^dJ'(-s)-(2-s)^dJ'(s).
\end{equation}
The integrand vanishes for $s\ge b+c$, so take $0<s<b+c$; interchange
$b,c$ so that $b\le c$. On
\[
 I=[-b,b]\cap[s-c,s+c]=[L,U]
\]
write
\[
 A(x)=(2+x)(2+s-x),\quad B(x)=(2-x)(2-s+x),\quad m=d-1,
\]
and $M_A=\int_I A^m$, $M_B=\int_I B^m$. If $s<c-b$, then
$I=[-b,b]$ and
\[
 J'(s)=d\int_I(2+x)A^m,\qquad
 J'(-s)=d\int_I(2-x)B^m.
\]
If $c-b<s<b+c$, then $L=s-c$, $U=b$, and
\[
 J'(s)=-A(L)^d+d\int_I(2+x)A^m,\qquad
 J'(-s)=B(L)^d+d\int_I(2-x)B^m.
\]
Since $A'=B'=s-2x$, integration by parts yields
\[
 d\int_I(2-x)B^m=\frac{d(4-s)}2M_B+\frac{[B^d]_L^U}{2},
 \quad
 d\int_I(2+x)A^m=\frac{d(4+s)}2M_A-\frac{[A^d]_L^U}{2}.
\]
The main term of $G(s)$ is therefore
\begin{equation}\label{eq:boxshare-main}
 \frac d2\bigl((2+s)^d(4-s)M_B-(2-s)^d(4+s)M_A\bigr).
\end{equation}
The remaining boundary term in the central case is
\[
 \frac12\{(2+s)^d(B(U)^d-B(L)^d)
 +(2-s)^d(A(U)^d-A(L)^d)\}>0,
\]
because $U=b,L=-b$ and both differences of the bases equal $2bs$. In
the clipped case it is
\[
 \frac12\{(2+s)^d(B(L)^d+B(U)^d)
 +(2-s)^d(A(L)^d+A(U)^d)\}>0.
\]
It remains to prove, with $r=(2+s)/(2-s)$,
\begin{equation}\label{eq:boxshare-ratio}
 \frac{M_A}{M_B}<r^d\frac{4-s}{4+s}.
\end{equation}

First suppose that $I$ contains $h=s/2$. With $y=x-h$,
$B=q-y^2$, $q=(2-s/2)^2$, and $A=B+4s$. For
$N_j=\int_I B^j$, integration of $(yB^j)'$ gives
\[
 (2j+1)N_j-2jqN_{j-1}=[yB^j]_{\partial I}\ge0,
\]
where the endpoints straddle $y=0$. Iterating and expanding $(B+4s)^m$
gives
\[
 \frac{M_A}{M_B}\le
 S_m(z):=\sum_{k=0}^m\binom{m+\tfrac12}{k}z^k,
 \qquad z=\frac{4s}{q}.
\]
This is the degree-$m$ Taylor polynomial at zero of
$(1+z)^{m+1/2}$, whose Lagrange remainder is positive. Hence, with
$w=(4+s)/(4-s)$ and $1+z=w^2$,
\[
 \frac{M_A}{M_B}<w^{2m+1}<\frac{r^{m+1}}{w}.
\]
The last inequality follows from $w^2<r$; clearing denominators leaves
the positive difference $2s^3$. Since $d=m+1$, this is
\eqref{eq:boxshare-ratio}.

Suppose next that the central interval $[-b,b]$ misses $h$, so $b<h$.
Pair $x$ and $-x$ for $0\le x\le b$. Put
\[
 D_0=4-2s-x^2,\quad z_0=sx,\quad
 p=\frac{D_0-z_0}{D_0+z_0},\quad P=1-p,
\]
and
\[
 \theta=\frac{s-x}{2+s},\qquad \kappa=\frac{2-x}{2+s}.
\]
The exact identities
\[
 \frac{A(x)}{rB(x)}=1-\theta P,\qquad
 \frac{A(-x)}{rB(x)}=1-\kappa P
\]
hold. Here $0<p\le1$, $P=1-p$, and $0\le\theta,\kappa\le1$. Convexity
gives the chord bound
\[
 (1-\lambda P)^m\le1-\lambda+\lambda p^m
 \qquad(0\le\lambda\le1),
\]
with $m=0$ immediate. Applying this to $\theta$ and $\kappa$, using
$\theta+\kappa=1-2x/(2+s)\le1$, and dividing by $1+p^m$ gives
\[
 \frac{A(x)^m+A(-x)^m}{B(x)^m+B(-x)^m}
 \le r^m\left(1+\frac{2x}{2+s}\right)
 \le r^m\left(1+\frac{s}{2+s}\right)
 <r^{m+1}\frac{4-s}{4+s};
\]
the final gap is $(s+6)s^2/((2-s)(2+s)(4+s))>0$. Integration proves
\eqref{eq:boxshare-ratio}. Finally, if a clipped interval misses $h$,
then it lies to the left of $h$ and is a suffix of $[-b,b]$. There $B$
increases while $(A/B)^m=(1+4s/B)^m$ decreases. Deleting the low-$B$
prefix decreases the $B^m$-weighted average $M_A/M_B$, reducing to the
central case. Thus \eqref{eq:boxshare-ratio} always holds. The main term
\eqref{eq:boxshare-main} and the positive boundary term give $G(s)>0$,
and \eqref{eq:boxshare-pair} proves the assertion for $d\ge1$.

For $d=0$, $K_{b,c}$ is the convolution of two centered interval
indicators, hence is even and nonincreasing on $[0,\infty)$. Thus
$-xK_{b,c}'(x)\ge0$. If $a=b=c=1$, then
$K_{1,1}(x)=2-|x|$ on $[-1,1]$, and $\mathcal D_0(1,1,1)=1$.
\end{proof}

\begin{proof}[Proof of Theorem~\ref{thm:zerosum}]
We apply the three box inequalities to \eqref{eq:zerosum-normal}. For
$q=0$ let $\mu_q=\delta_1$, while for $q\ge1$ put
\begin{equation}\label{eq:layer-measure}
 d\mu_q(a)=3^q\,d\delta_1(a)
       +2qa(4-a^2)^{q-1}\,da\qquad(0<a<1).
\end{equation}
Then, exactly on $|u|\le1$,
\begin{equation}\label{eq:layer-cake}
 (4-u^2)^q
 =\int_{(0,1]}\mathbf1_{\{|u|\le a\}}\,d\mu_q(a).
\end{equation}
Thus every product of pair factors is a positive mixture of indicators of
the centered boxes $K(a_1,a_2,a_3)$.

Assume first $d\ge1$. A positive-root occurrence on line $j$ has
singleton integral $\int_QF x_j/(2+x_j)$, which is negative by
\eqref{eq:layer-cake} and Lemma~\ref{lem:positive-box}. A negative-root
occurrence requires $q_j\ge1$. Peeling off one factor
$4-x_j^2=(2+x_j)(2-x_j)$ rewrites its singleton as
\[
 -\int_Qx_jH^d(2+x_j)(4-x_j^2)^{q_j-1}
       \prod_{i\ne j}(4-x_i^2)^{q_i},
\]
which is negative by Lemma~\ref{lem:negative-box}. If $d=0$, then $F$ is
centrally even, and pairing $x$ with $-x$ gives, for either occurring sign,
\[
 \varphi_{\pm\gamma_j}=-\int_QF\frac{x_j^2}{4-x_j^2}<0.
\]
This proves condition (a) of Proposition~\ref{prop:reduction}.

It remains to prove the line-share inequalities. Fix the first line. On
$[-1,1]$ set
\[
 w_j(u)=(2+u)^d(4-u^2)^{q_j},\qquad
 g_j(u)=w_j(u)\mathbf1_{\{|u|\le1\}},
\]
and extend only $g_j$ by zero to $\mathbb R$. Put
\[
K(x)=\int_{\mathbb R}g_2(y)g_3(-x-y)\,dy,
 \qquad I=\int_{-1}^1w_1(x)K(x)\,dx=\int_QF.
\]
The two edge integrals on this line sum to
\[
 B_1=w_1(-1)K(-1)+w_1(1)K(1)
 =\int_{E_{\gamma_1}}F+\int_{E_{-\gamma_1}}F.
\]
Since the multiplicities on the line are
$d+q_1$ copies of $\gamma_1$ and $q_1$ copies of $-\gamma_1$,
\[
\sum_{t_i=\pm\gamma_1}\varphi_i
 =\int_{-1}^1xw_1K\left(\frac{d+q_1}{2+x}-\frac{q_1}{2-x}\right)dx
 =\int_{-1}^1xw_1'K\,dx.
\]
By condition (a), the share $S_1:=S_{\gamma_1}$ is
$B_1-\sum_{t_i=\pm\gamma_1}\varphi_i$. Since $K$ is absolutely
continuous, integration by parts on $[-1,1]$ gives
\[
 B_1=I+\int_{-1}^1x(w_1'K+w_1K')\,dx,\qquad
 I-S_1=-\int_{-1}^1xw_1K'\,dx.                  \tag{*}
\]
The layer kernels are uniformly compactly supported and Lipschitz, so
substitution of \eqref{eq:layer-cake} in (*) and Fubini's theorem, applied
also to their almost-everywhere derivatives, give
\[
 I-S_1=\int_{(0,1]^3}\mathcal D_d(a,b,c)\,
 d\mu_{q_1}(a)d\mu_{q_2}(b)d\mu_{q_3}(c).
\]
For $d\ge1$ this is strictly positive by Lemma~\ref{lem:boxshare}. For
$d=0$ every layer is nonnegative, while the atom $(a,b,c)=(1,1,1)$ has
positive mass and contributes $1$. Thus $S_1<I$ in all cases. The same
argument on the other two root lines proves condition (b), so
Proposition~\ref{prop:reduction} gives stability.

Finally, for $k=n-2\ge2$, use $k/2$ antipodal pairs if $k$ is even, and
one rotation triple together with $(k-3)/2$ antipodal pairs if $k$ is odd.
These zero-sum multisets give the asserted examples in every dimension.
\end{proof}

\begin{lemma}[extreme-exponent ordering]\label{lem:moment-order}
Assume $d\ge1$ in \eqref{eq:zerosum-normal}, and put
\[
 M_i=\int_Qx_iF(x)\,dx\qquad(i=1,2,3).
\]
If $q_i=\max_\nu q_\nu$, $q_j=\min_\nu q_\nu$, and $q_i>q_j$, then
$M_i<M_j$. In particular, the hexagon moment can vanish only if
$q_1=q_2=q_3$.
\end{lemma}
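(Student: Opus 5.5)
The plan is to compare $M_i$ and $M_j$ through the reflection $\sigma$ of the hexagon that interchanges $x_i$ and $x_j$ and fixes the third coordinate $x_k$. This reflection preserves $Q$ and Lebesgue measure, and it preserves $H$. Write $e=q_i-q_j\ge1$ and factor
\[
 F=G\,(4-x_i^2)^{e},\qquad
 G=H^d(4-x_i^2)^{q_j}(4-x_j^2)^{q_j}(4-x_k^2)^{q_k},
\]
so that $G$ is $\sigma$-invariant. Then $M_i-M_j=\int_Q(x_i-x_j)G(4-x_i^2)^e$. Averaging this over $\sigma$ gives
\[
 M_i-M_j=\tfrac12\int_Q(x_i-x_j)\,G\,\bigl[(4-x_i^2)^e-(4-x_j^2)^e\bigr].
\]
Factoring $x_j^2-x_i^2=(x_j-x_i)(x_j+x_i)$ out of the bracket and using $x_i+x_j=-x_k$, this becomes
\[
 M_i-M_j=\tfrac12\int_Q x_k\,(x_i-x_j)^2\,R\,G,
\]
where $R=\sum_{l=0}^{e-1}(4-x_i^2)^l(4-x_j^2)^{e-1-l}$ is positive and $\sigma$-invariant. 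I have not yet used that $q_j$ is the minimum, nor that $q_i$ is the maximum.

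Every factor of the integrand except $x_k$ and $H^d$ is even under the central symmetry $x\mapsto-x$ of $Q$. Put $W=(x_i-x_j)^2RG/H^d$, which is even. Folding the integral along $x\mapsto-x$ leaves
\[
 M_i-M_j=\tfrac12\int_{Q\cap\{x_k>0\}} x_k\,W\,\bigl(H^d(x)-H^d(-x)\bigr).
\]
This is exactly where $d\ge1$ enters: for $d=0$ the integral vanishes identically. The goal is to show that this integral is negative.

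To analyse it I would reuse the ray-pair decomposition from the proof of Lemma~\ref{lem:positive-box}. On a ray $t(a,b,c)$, with $c$ the $x_k$-slope, $a+b+c=0$ and $c>0$, the logarithmic-derivative identity of that proof gives
\[
 \operatorname{sign}\bigl(H(t)^d-H(-t)^d\bigr)=\operatorname{sign}(abc).
\]
Since $c>0$, the ray contributes with the sign of $ab$. Rays with $ab<0$ therefore contribute negatively, and rays with $a,b<0$ contribute positively. The expansion of the odd part for $d=1$, namely $x_1x_2x_3$, shows that this splitting is genuine and not an artefact of the method.

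The key step, and the one I expect to be the main obstacle, is to show that the negative region dominates. Two features of the weight should help. First, $(x_i-x_j)^2=(a-b)^2t^2$ is large on the mixed-sign rays, where $|a-b|\ge c$. On the same-sign rays it is small, because there $|a-b|\le|a+b|=c$. Second, $R$ and the remaining factors of $G$ are even in $x_i$ and $x_j$ separately, so they can be handled by the layer-cake decomposition~\eqref{eq:layer-cake}, which reduces to centered boxes $K(a_1,a_2,a_3)$.

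On a single box I would parametrize by $x_k=s$ and the orthogonal coordinate $v=x_i-x_j$, and pair the slices $s$ and $-s$, as in Lemma~\ref{lem:negative-box}. The claim then becomes a weighted comparison of the form
\[
 \int v^2 R\,H^d\big|_{s}\;<\;\int v^2R\,H^d\big|_{-s},
\]
which I would try to prove by the moment-recurrence technique of~\eqref{eq:moment-recurrence}. The weight $v^2$ pushes mass toward the ends of the slice, which is the mixed-sign region. If the box reduction fails because $R$ is not a positive mixture of box indicators, the fallback is a direct ray-by-ray comparison that pairs the same-sign rays with the mixed-sign rays obtained by swapping which of $a,b$ dominates.

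Once $M_i<M_j$ is established, the final clause follows at once. If the $q_\nu$ are not all equal, then the maximum and the minimum differ, so $M_i\ne M_j$. Hence the hexagon moment $\int_Q xF$, whose pairings with the roots are $M_1,M_2,M_3$, is nonzero.
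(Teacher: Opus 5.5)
Your $\sigma$-averaging identity and the folding are correct. If you replace $H^d$ by its odd part $x_1x_2x_3R_d$, your integral becomes $\tfrac12\int_Q x_ix_j(x_i^2-x_j^2)^2R\,G_0R_d$ with $G_0=G/H^d$. Summed over the paper's twelve sectors, this reproduces exactly the paper's three-term expression for $V_i-V_j$. But the whole content of the lemma is the sign of this integral, and that is the step you leave open.

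The route you propose for it fails, because the slice-wise inequality $\int v^2RH^d|_{s}<\int v^2RH^d|_{-s}$ is false. Take $d=1$ and $(q_i,q_j,q_k)=(1,0,0)$. Then $R=1$, $G=H$, and the only layer is $Q$ itself. Both slices $x_k=\pm s$ are $|v|\le 2-s$, with $H=\tfrac14(2\pm s)\bigl((4\mp s)^2-v^2\bigr)$. Writing $\Phi(\pm s)$ for the integral of $v^2H$ over the slice $x_k=\pm s$,
\[
\Phi(s)-\Phi(-s)=\frac s2\int_{-(2-s)}^{2-s}v^2\,(s^2-v^2)\,dv
=s\,(2-s)^3\Bigl(\frac{s^2}{3}-\frac{(2-s)^2}{5}\Bigr),
\]
which is positive for $s>2\sqrt3/(\sqrt3+\sqrt5)\approx0.87$. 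Negativity appears only after integrating in $s$; here $\int_0^1 s\,(\Phi(s)-\Phi(-s))\,ds=-\tfrac{3}{20}$. So no moment recurrence can prove the slice inequality, and the layer-cake step cannot rescue it, since this case has a single layer. Your ray-by-ray fallback is not specified precisely enough to check.

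What is missing is a pairing under which the combined integrand is pointwise nonpositive.

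The paper's pairing groups the twelve points with the same magnitudes $\{a,b,a+b\}$, on which $|x_1x_2x_3|$ and $R_d$ agree. It then proves $V_i<V_j$ algebraically. This is where minimality of $q_j$ enters: $q_k\ge q_j$ gives $B^h\ge C^h$.

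Within your own formulation, the reflection $(x_i,x_j)\mapsto(-x_i,x_j)$ also closes the gap:
\begin{enumerate}
\item It preserves $(x_i^2-x_j^2)^2$, $R$ and $(4-x_i^2)^{q_j}(4-x_j^2)^{q_j}$, and it reverses the sign of $x_ix_j$.
\item It maps $Q\cap\{x_ix_j>0\}$ injectively into $Q\cap\{x_ix_j<0\}$.
\item At the mixed-sign point, $|x_k|=\bigl||x_i|-|x_j|\bigr|$ is smaller than $|x_i|+|x_j|$, so $(4-x_k^2)^{q_k}$ is larger.
\item Both $\max H(\pm x)$ and $\min H(\pm x)$ are larger there, so the divided difference $R_d$ is at least as large.
\end{enumerate}
The unmatched mixed-sign region, for example near $(x_i,x_j)=(1,-\tfrac12)$, has positive measure, and $M_i<M_j$ follows. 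This argument in fact works for any $q_i>q_j$, with no extremality hypothesis.
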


\begin{proof}
Write
\[
 E=8-\sum_{\nu=1}^3x_\nu^2,\qquad e=x_1x_2x_3,
 \qquad G=\prod_{\nu=1}^3(4-x_\nu^2)^{q_\nu},
\]
so that $H=E+e$ and $H(-x)=E-e$. The divided difference
\[
 R_d=\sum_{m=0}^{\lfloor(d-1)/2\rfloor}
 \binom d{2m+1}E^{d-2m-1}e^{2m}
 =\frac{(E+e)^d-(E-e)^d}{2e}
\]
where the quotient at $e=0$ is interpreted as $dE^{d-1}$, is continuous
and strictly positive on $Q$. It is invariant under coordinate permutations
and central inversion. Since $G$ is centrally even, central symmetry gives
\begin{equation}\label{eq:moment-central}
 M_i=\frac12\int_Qx_i\bigl(H(x)^d-H(-x)^d\bigr)G(x)\,dx
 =\int_Qx_ieR_dG\,dx.
\end{equation}

Up to a null set, $Q$ is the disjoint union of the twelve images, under
coordinate permutations and central inversion, of
\[
 \mathcal T=\{(a,b):0<a<b,\ a+b<1\},\qquad
 (x_1,x_2,x_3)=(a,b,-c),\quad c=a+b.
\]
These maps have Jacobian of absolute value one in lattice coordinates.
Put
\[
 A=4-a^2>B=4-b^2>C=4-c^2>0,
\]
and attach the scores $s_A=-a$, $s_B=-b$, $s_C=c$ to the corresponding
even factors $U_A=A$, $U_B=B$, $U_C=C$. Define
\[
 V_i=\sum_{\sigma:\{1,2,3\}\overset{\sim}{\longrightarrow}\{A,B,C\}}
 s_{\sigma(i)}\prod_{\nu=1}^3U_{\sigma(\nu)}^{q_\nu}.
\]
At $(a,b,-c)$ one has $e=-abc$, and assigning the magnitude labelled
$L$ to coordinate $i$ gives $x_ie=abc\,s_L$. The negative sector gives
the same product. Hence \eqref{eq:moment-central} becomes
\begin{equation}\label{eq:moment-sector}
 M_i=2\int_{\mathcal T}abc\,R_d(a,b,-c)V_i(a,b)\,da\,db.
\end{equation}

Let $k$ be the remaining index and write
\[
 q_i=q+r,\qquad q_j=q,\qquad q_k=q+h,
 \qquad r>0,\quad 0\le h\le r.
\]
Pairing in $V_i-V_j$ each assignment with the one that interchanges the
labels at $i$ and $j$, and grouping by the label at $k$, gives
\begin{align*}
 V_i-V_j=(ABC)^q\bigl[{}
 &(b-a)(A^r-B^r)C^h\\
 &-(2a+b)(A^r-C^r)B^h\\
 &-(a+2b)(B^r-C^r)A^h\bigr].
\end{align*}
The absolute value of the second term alone is strictly larger than the
first, since
\[
 2a+b>b-a,\qquad A^r-C^r>A^r-B^r,\qquad B^h\ge C^h,
\]
and the third term is also strictly negative. Thus $V_i<V_j$ throughout
$\mathcal T$. Equation~\eqref{eq:moment-sector} and $R_d>0$ imply
$M_i<M_j$.
\end{proof}

\begin{proof}[Proof of Theorem~\ref{thm:roottwist-ke}]
Use the normal form \eqref{eq:zerosum-normal}, after global negation if
necessary; this preserves moment vanishing and both symmetry alternatives.
If $d=0$, the multiplicities of every root and its negative
agree. Thus the multiset is negation-invariant, $F$ is centrally even,
and its hexagon moment vanishes.

Suppose $d\ge1$. If $q_1=q_2=q_3$, then the multiplicities $d+q_j$ of
the $\gamma_j$ and the multiplicities $q_j$ of the $-\gamma_j$ are
separately constant in $j$. Hence the multiset and $F$ are invariant under
$\tau$. The three numbers $M_j=\int_Qx_jF$ are equal; since
$x_1+x_2+x_3=0$, they vanish. Conversely, if the hexagon moment vanishes,
then every $M_j$ vanishes, and Lemma~\ref{lem:moment-order} forces
$q_1=q_2=q_3$. For $d\ge1$ the multiset cannot be negation-invariant,
because the multiplicities of $\gamma_j$ and $-\gamma_j$ differ by $d$.

The moment therefore vanishes exactly in the two stated symmetry cases.
Lemma~\ref{lem:prism}(3) identifies moment vanishing with the
K\"ahler--Einstein condition.
\end{proof}

The general result contains two of the following three explicit families;
the third shows that zero sum is not necessary.

\begin{theorem}\label{thm:families}
For every $r \ge 1$, $m \ge 1$, and $s\ge0$, the root-twist varieties
\[
X\bigl((\alpha, -\alpha)^{\times r}\bigr) \quad (n = 2r + 2)
\qquad\text{and}\qquad
X\bigl((\alpha, \beta, -(\alpha{+}\beta))^{\times m}\bigr) \quad (n = 3m+2)
\]
have $(-K)$-stable tangent bundle and are K\"ahler--Einstein. The varieties
\[
Y_s=X\bigl((\alpha,-\alpha)^{\times s},\alpha,\beta\bigr)
\qquad (n=2s+4)
\]
also have stable tangent bundle but are not K\"ahler--Einstein. All three
families are smooth toric Fanos of Picard rank $n+2$. In particular, stable
K\"ahler--Einstein examples of rank $n+2$ exist in every even dimension
$n\ge4$ and every dimension $n\equiv2\pmod3$, $n\ge5$, while stable
non-K\"ahler--Einstein examples of that rank exist in every even dimension
$n\ge4$.
\end{theorem}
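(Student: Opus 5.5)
The first two families are zero-sum multisets of nonzero roots, so I would derive them from Theorems~\ref{thm:zerosum} and~\ref{thm:roottwist-ke}. The multiset $(\alpha,-\alpha)^{\times r}$ sums to zero and is negation-invariant. The multiset $(\alpha,\beta,-(\alpha+\beta))^{\times m}$ sums to zero and is $\tau$-invariant. Hence both are stable and K\"ahler--Einstein. For these two cases I would also give the direct symmetry argument sketched after Theorem~\ref{thm:roottwist-ke}: the symmetry permutes base factors, fixes the barycenter, and acts on the fibre component by $-1$ or by $\tau$. Picard rank $n+2$ and the Fano property come from Lemma~\ref{lem:prism}(1) for all three families.

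For $Y_s$ the twist sum is $\alpha+\beta\ne0$, so Theorem~\ref{thm:zerosum} does not apply. I would go back to Proposition~\ref{prop:reduction}. In the coordinates $x_j$ we have
\[
F=(4-x_1^2)^s(2+x_1)(2+x_2).
\]
Since $2+x_2=2-x_1-x_3$, this is not a centered product in the three line variables. I would still write it in the normal-form style, with one $\gamma_1$, one $\gamma_2$, and $s$ pairs on line $1$.

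\textbf{Condition (a).} There are three kinds of singleton integrals. Pair twists $\pm\alpha$ give $\pm\int_Q F x_1/(2\pm x_1)$. The lone $\alpha$ and $\beta$ give $\int_Q F x_j/(2+x_j)$ for $j=1,2$.
\begin{itemize}
\item For $\beta$, the integral is $\int_Q x_2(4-x_1^2)^s(2+x_1)$. I would pair $x$ with the reflection fixing the $x_1$-axis direction appropriately, or integrate directly in the two coordinates $(x_1,x_2)$ on the hexagon, reducing to a one-variable polynomial inequality in $s$.
\item The others reduce similarly.
\end{itemize}
I would try the layer decomposition \eqref{eq:layer-cake} on the $(4-x_1^2)^s$ factor. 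This leaves the kernels $(2+x_1)(2+x_2)$ on truncated hexagons $\{|x_1|\le a\}\cap Q$, i.e.\ $K(a,1,1)$, to which the $d=1$ arguments of Lemmas~\ref{lem:positive-box} and~\ref{lem:negative-box} should adapt. The missing factor $(2+x_3)$ changes those computations, so I would redo the ray-pair / slice-pair computation for the specific cubic kernel.

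\textbf{Condition (b).} For the three line shares I would use the integration-by-parts formula $(*)$, $I-S_1=-\int x w_1 K'$, on each line. Here the base weights are explicit polynomials of bounded degree, apart from the $(4-u^2)^s$ layer. After layering, each layer share is an explicit piecewise-polynomial function of $a$ alone, and I would show positivity of its integral against $\mu_s$.

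\textbf{Non-K\"ahler--Einstein.} By Lemma~\ref{lem:prism}(3) it suffices to show
\[
\int_Q x F\ne0.
\]
Central symmetry kills the even part, leaving
\[
\int_Q x\,(4-x_1^2)^s\,(2x_1+2x_2)=-2\int_Q x\,x_3(4-x_1^2)^s.
\]
I would show that its pairing with a suitable linear form is nonzero. Pairing with $x_3$ gives $-2\int_Q x_3^2(4-x_1^2)^s<0$, which is immediate.

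\textbf{Main obstacle.} I expect the main obstacle to be the uniform-in-$s$ verification of (a) and (b) for $Y_s$. The loss of centered box structure means the earlier lemmas do not apply verbatim, so I would first check $s=0,1$ exactly (matching the table) and then seek a monotonicity-in-$a$ argument for the layers.
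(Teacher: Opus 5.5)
Your handling of the two symmetric families (via Theorems~\ref{thm:zerosum} and~\ref{thm:roottwist-ke}) and of smoothness, the Fano property and Picard rank (via Lemma~\ref{lem:prism}(1)) is exactly the paper's. Your non-K\"ahler--Einstein argument for $Y_s$ is correct and slightly slicker than the paper's: pairing the moment with $\gamma_3$ gives $-2\int_Q x_3^2(4-x_1^2)^s<0$ at once, whereas the paper pairs with $\alpha$ and evaluates $2\int_0^1u^2(2-u)q(u)^s\,du>0$ by slicing.

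The genuine gap is the stability of $Y_s$, which is the only part of the theorem not inherited from earlier results. For conditions (a) and (b) of Proposition~\ref{prop:reduction} you give only intentions: the box lemmas ``should adapt'', the other singletons ``reduce similarly'', and ``I would show positivity''. You also concede that the uniform-in-$s$ verification is the main obstacle. Checking $s=0,1$ and then looking for a monotonicity argument does not prove the claim for all $s\ge0$. As written, nothing about the stability of $Y_s$ has been established.

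The missing step is concrete and needs no analogue of Lemmas~\ref{lem:positive-box} and~\ref{lem:negative-box}. Slice $Q$ at $x_1=\pm u$ with $0\le u\le1$. The $x_2$-fibres are $[-1,1-u]$ and $[-1+u,1]$. On them $\int(2+x_2)\,dx_2$ equals $4-3u+\tfrac12u^2$ and $4-u-\tfrac12u^2$, and $\int x_2\,dx_2=\mp(u-\tfrac12u^2)$. Every quantity in Proposition~\ref{prop:reduction} then becomes $\int_0^1p(u)q(u)^s\,du$, with $q=4-u^2$ and $p$ an explicit polynomial.

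\emph{Condition (a).} Pairing the two slices gives $\varphi_\alpha=\varphi_\beta=-\int_0^1u^2(2-u)q^s\,du$. For $s\ge1$ it also gives $\varphi_{-\alpha}=-\int_0^1u^2q^{s-1}(24-12u-2u^2+u^3)\,du$. Both integrands are pointwise negative, which is (a).

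\emph{Condition (b).} Edge integration gives $I_0-S_\beta=2\int_0^1q^s(1-u)(4+u-u^2)\,du$. Via the share identity, $I_0-S_\alpha=2\int_0^1q^s(4u+u^2-u^3)\,du$. Both integrands are positive. However, $I_0-S_{\alpha+\beta}=\int_0^1q^s(8-10u+u^3)\,du$ has an integrand that turns negative near $u=1$, so a pointwise argument fails on that line. The paper closes this with Chebyshev's integral inequality for the two nonincreasing functions $q^s$ and $8-10u+u^3$. Your layer decomposition would work equally well here, since $\int_0^a(8-10u+u^3)\,du=a(8-5a+a^3/4)>0$ for all $a\in(0,1]$. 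Identifying and verifying exactly this inequality is what your proposal leaves undone.
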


\begin{proof}
\emph{Smooth Fano} is Lemma~\ref{lem:prism}(1).

\emph{K\"ahler--Einstein.} The pair and rotation-orbit multisets satisfy
the two symmetry alternatives of Theorem~\ref{thm:roottwist-ke}.

\emph{Stability.} The pair and triple twist multisets are nonzero and
zero-sum, so their tangent bundles are stable by
Theorem~\ref{thm:zerosum}.

\emph{The unbalanced family.} Put $x=\ell_\alpha$, $y=\ell_\beta$,
$q(u)=4-u^2$, and
$M_j=\int_0^1u^jq(u)^s\,du$. For $Y_s$ one has
$F=q(x)^s(2+x)(2+y)$. For $0\le u\le1$, the slices of $Q$ at $x=u$ and
$x=-u$ are respectively $[-1,1-u]$ and $[-1+u,1]$ in the $y$-coordinate,
and
\[
\int_{-1}^{1-u}(2+y)\,dy=4-3u+\tfrac12u^2,
\qquad
\int_{-1+u}^{1}(2+y)\,dy=4-u-\tfrac12u^2.
\]
Consequently, with $I_0=\int_QF$,
\[
I_0=16M_0-8M_1-2M_2+M_3,
\qquad
\varphi_\alpha=\varphi_\beta=-2M_2+M_3
=-\int_0^1u^2(2-u)q(u)^s\,du<0.
\]
If $s\ge1$, the remaining singleton integral is
\[
\varphi_{-\alpha}=-\int_0^1u^2q(u)^{s-1}
(24-12u-2u^2+u^3)\,du<0,
\]
since the last factor is at least $11$ on $[0,1]$. Thus
Proposition~\ref{prop:reduction}(a) holds (and for $s=0$ there is no
$-\alpha$ twist).

Let $B_\rho=\int_{E_\rho}F+\int_{E_{-\rho}}F$. Direct edge integration
gives
\[
B_\beta=8M_0-2M_1,
\qquad
B_{\alpha+\beta}=8M_0+2M_1-2M_2.
\]
There is one $\beta$ twist and no $\pm(\alpha+\beta)$ twist, hence
\[
S_\beta=8M_0-2M_1+2M_2-M_3,
\qquad
S_{\alpha+\beta}=8M_0+2M_1-2M_2.
\]
It follows that
\[
I_0-S_\beta=2\int_0^1q(u)^s(1-u)(4+u-u^2)\,du>0.
\]
The functions $q(u)^s$ and $h(u)=8-10u+u^3$ are nonincreasing on
$[0,1]$. Chebyshev's integral inequality therefore gives
\[
I_0-S_{\alpha+\beta}=\int_0^1q(u)^sh(u)\,du
\ge M_0\int_0^1h(u)\,du=\tfrac{13}{4}M_0>0.
\]
Finally the share identity yields
\[
I_0-S_\alpha=S_\beta+S_{\alpha+\beta}-I_0
=2\int_0^1q(u)^s(4u+u^2-u^3)\,du>0.
\]
Proposition~\ref{prop:reduction}(b) now proves stability.

On the other hand, central symmetry of $Q$ gives
\[
\int_QxF=2\int_Qq(x)^s(x^2+xy)
=2\int_0^1u^2(2-u)q(u)^s\,du>0.
\]
The hexagon moment is therefore nonzero, so Lemma~\ref{lem:prism}(3) shows
that $Y_s$ is not K\"ahler--Einstein.
\end{proof}

The first unbalanced member $Y_0=X(\alpha,\beta)$ is the census entry
\textsf{F.4D.0061}: it has $(-K)^4=278$, $\mu(T_X)=139/2$, and maximal
subsheaf slope $206/3$, with barycenter
$(10,10,-5,-5)/139$ in the construction coordinates. Explicitly, writing
rays as row vectors, right multiplication by the unimodular matrix
\[
\begin{pmatrix}
0&0&-1&0\\ -1&0&1&0\\ 0&0&0&1\\ 0&1&0&0
\end{pmatrix}
\]
sends its ten rays onto the stored rays of \textsf{F.4D.0061}. Thus the failure
of twist-sum necessity is already visible in dimension four, and
Theorem~\ref{thm:families} propagates it to every even dimension.

\begin{remark}\label{rem:margins}
The margins are explicit: for the triple family every share sits uniformly
$\tfrac13 \int_Q F$ below critical, while for the pair family the margin on
the twisted line is $2J = (4^{r+1} - 3^{r+1})/(r+1) = o(\int_Q F)$, where
$J=\int_0^1u(4-u^2)^r\,du$: the
one-line family is asymptotically critical on its own line, consistent
with the near-critical maximal slopes these varieties exhibit in the
census. (For the asymptotic claim, the numerator is $O(4^r/r)$, while
slicing over $|\ell_\alpha|\le r^{-1/2}$ gives
$\int_QF\gg4^r/\sqrt r$.)

The normal form \eqref{eq:zerosum-normal} and the positive decomposition
\eqref{eq:layer-cake} show why the mixed cases remain tractable despite their
lack of symmetry: the relevant inequalities are preserved under positive
averaging on centered-box layers; when $d=0$, the full-box atom supplies the
required strictness. Thus zero sum is sufficient
for stability in all dimensions, while the unbalanced family shows that it is
not necessary.
\end{remark}

By Lemma~\ref{lem:prism}(3), the K\"ahler--Einstein question is the moment
problem $\int_QxF(x)\,dx=0$. Theorem~\ref{thm:roottwist-ke} solves it
under the zero-sum hypothesis; classifying vanishing moments for arbitrary
root multisets remains open.

\section{Structural observations and open questions}\label{sec:structure}

\subsection{Antipodal directions in destabilizers}
Steffens proved that a Fano three-fold whose tangent bundle is not
stable admits a destabilizing subsheaf that is the relative tangent
sheaf of the contraction of an extremal face, and expected the statement
to persist in every dimension \cite{Steffens1996}. For every unstable case
our computation forms the sum of all maximal-slope ray-spanned subspaces,
which is the canonical first Harder--Narasimhan subspace. Its dimension is
one in $5/7$, $38/74$, $375/675$, $3865/6620$ of the unstable cases in
dimensions $3, 4, 5, 6$. It contains an antipodal pair $\pm u$ of rays in
$7/7$, $68/74$, $558/675$, $4877/6620$ of them. Such a pair is the
Klyachko datum of a rank-one relative tangent candidate; determining whether
it comes from an equivariant $\PP^1$-fibration requires checking compatibility
with the fan. The no-pair cases (six in dimension four, $117$ in dimension
five, $1743$ in dimension six) include HN subspaces spanned by a single ray
($1$, $50$, $986$ in dimensions $4, 5, 6$), candidates for relative tangent
sheaves of birational contractions. Matching these subspaces to extremal
faces of the Mori cone is needed to turn the numerical pattern into a theorem
or a counterexample to the proposed higher-dimensional principle.

\subsection{High Picard rank in the Fano census}
Question 1.5 of \cite{HNS2022} asks whether, for fixed $(n, \rho)$, only
finitely many smooth projective toric $n$-folds of Picard rank $\rho$
have a polarization making $T_X$ stable. The present census does not
address that question: there are only finitely many smooth toric Fano
$n$-folds for each $n$, so finiteness among anticanonical Fanos is
immediate. What the tables of Section~\ref{sec:results} do show is the
shape of the stable locus at $-K$: it thins rapidly with $\rho$, and
its high-$\rho$ boundary is occupied by a single structured example in
dimension five (Section~\ref{sec:roottwist}). The root-twist varieties have
$\rho=n+2$, and Theorem~\ref{thm:zerosum} gives stable examples at that rank
in every dimension $n\ge4$; whether they exhaust the high-rank stable locus
is open.

\subsection{Open questions}
\begin{enumerate}
\item Classify arbitrary root multisets, without the zero-sum hypothesis,
whose hexagon moment vanishes. More generally, characterize the toric
fibrations for which a twist
turns all slope equalities of the fiber--base decomposition strict.
\item Determine the maximal Picard rank of a stable example in each
dimension. Is it $n + 2$, attained by the root-twist family, for all
$n \ge 5$? By Theorem~\ref{thm:zerosum}, rank $n + 2$ is attained in
every dimension $n\ge4$; only maximality is open.
\item Extend the census from $-K$ to the full nef cone (the stability
chambers of \cite{HNS2022}, Ex.~5.2), where already in dimension three
the chamber structure is not polyhedral.
\item Quantify the asymptotics of the stable, polystable and
K\"ahler--Einstein fractions as $n \to \infty$.
\end{enumerate}

\providecommand{\MR}{\relax\ifhmode\unskip\space\fi MR }
\providecommand{\MRhref}[2]{%
  \href{http://www.ams.org/mathscinet-getitem?mr=#1}{#2}
}
\providecommand{\href}[2]{#2}

\bigskip
\noindent{\sc Bernd Johannes Wuebben, New York, NY,} \texttt{wuebben@gmail.com}

\end{document}